\crefname{hypothesis}{Hypothesis}{Hypotheses}
\title{Distributed filtered hyperinterpolation for noisy data on the sphere\thanks{\today.
\funding{The first author acknowledges support from the National Natural Science Foundation of China, Grant No. 61876133. The second author acknowledges support from the Australian Research Council under Discovery Project DP180100506. The third author acknowledges support from the Research Grant Council of Hong Kong, Project No. CityU 11338616. This material is based upon work supported by the National Science Foundation under Grant No. DMS-1439786 while the second author were in residence at the Institute for Computational and Experimental Research in Mathematics in Providence, RI, during the Point Configurations in Geometry, Physics and Computer Science program.}}}
\author{Shao-bo Lin\thanks{School of Management, Xi'an Jiaotong University, Xi'an, China
  (\email{sblin1983@gmail.com}).}
\and Yu Guang Wang\thanks{School of Mathematics and Statistics, University of New South Wales, Sydney, NSW, Australia 
  (\email{yuguang.wang@unsw.edu.au}).}
\and Ding-Xuan Zhou\thanks{School of Data Science and Department of Mathematics, 
City University of Hong Kong, Tat Chee Avenue, Kowloon, Hong Kong 
  (\email{mazhou@cityu.edu.hk}).}}
\newcommand{\Rone}{ % real numbers
    \mathbb{R}
}
\newcommand{\sfgrad}[1][]{ % surface-gradient
	\nabla_{*}
}
\newcommand{\sfcurl}[1][]{ % surface-gradient
	\mathbf{L}
}
\newcommand{\Jw}[1][\alpha,\beta]{ % Jacobi weight w_{\alpha,\beta}
w_{#1}
}
\newcommand{\sph}[1]{ % sphere S^d
    \mathbb{S}^{#1}
}
\newcommand{\imat}[1][d]{ % identity matrix
    I
}
\newcommand{\Lpw}[2][\Jw]{ % Sobolev spaces W_p^s for w_{\alpha,\beta}
\mathbb{L}_{#2}(#1)
}
\newcommand{\InnerLGb}[2][{\Jw[r-\frac{1}{2},r-\frac{1}{2}]}]{ % inner product for Gegenbauer weights
\left(#2\right)_{\Lpw[{#1}]{2}}
}
\newcommand{\Diff}[2][t]{ % derivative
\ifthenelse{\equal{#2}{1}}{\frac{\mathrm{d}}{\mathrm{d}#1}}{
\left(\frac{\mathrm{d}}{\mathrm{d}#1}\right)^{#2}}
}
\newcommand{\bigo}[2] { % big O notation
    \mathcal{O}_{#1}\left(#2\right)
}
\newcommand{\fWend}[1]{ % Wendland function
    \widetilde{\phi}_{#1}
}
\newcommand{\fnWend}[1]{ % Wendland function
    \phi_{#1}
}
\newcommand{\bx}{{\bf x}}
\newcommand{\bz}{{\bf z}}
\begin{document}

\maketitle

% REQUIRED
\begin{abstract}
  Problems in astrophysics, space weather research and geophysics usually need to analyze noisy big data on the sphere.  This paper develops distributed filtered hyperinterpolation for noisy data on the sphere, which assigns the data fitting task to multiple servers to find a good approximation of the mapping of input and output data. For each server, the approximation is a filtered hyperinterpolation on the sphere by a small proportion of quadrature nodes.
  The distributed strategy allows parallel computing for data processing and model selection and thus reduces computational cost for each server while preserves the approximation capability compared to the filtered hyperinterpolation. We prove quantitative relation between the approximation capability of distributed filtered hyperinterpolation and the numbers of input data and servers. Numerical examples show the efficiency and accuracy of the proposed method.  
\end{abstract}

% REQUIRED
\begin{keywords}
  Distributed learning, filtered hyperinterpolation, noisy data, big data, sphere
\end{keywords}

% REQUIRED
\begin{AMS}
  68Q32, 65D05, 41A50, 33C55, 65T60
\end{AMS}

%%%%%%%%%%%%%%%%%%%%%%%%%%%%%%%%%%%%%%%%%%%%%%%%%%%%
\section{Introduction}
In cosmic microwave background analysis, global ionospheric prediction to geomagnetic storms, climate change modelling, environment governance and meteorology and remote sensing, data are collected on the sphere and usually big and noisy \cite{Planck2018I,FrGeSc1998,LeSlWoWa2019,Mendillo2006,MuPaAnPa2013,Planck2016A1}.
One of critical tasks of big data analysis on the sphere is to find an effective data fitting strategy to approximate the mapping between input and output data. There have been many useful methods for fitting spherical data, for example approximations by spherical harmonics \cite{Muller1966}, spherical basis functions \cite{LeNaWaWe2006,LeSlWe2010,HeSlWo2017,NaSuWaWe2007}, spherical wavelets \cite{FrGeSc1998}, spherical needlets \cite{BaKeMaPi2009,NaPeWa2006,FaPaLe2018,KePhPi2011,Marinucci_etal2008,WaLeSlWo2017}, spherical kernel methods \cite{FaPaLeMa2018,Lin2018} and spherical filtered hyperinterpolation \cite{SlWo2012}.
When noise is sufficiently small and decreases with size of data, least squares regularization can be used to reduce noise in learning representation, see e.g. \cite{LeNaWaWe2006,HeSlWo2017}. 
This method is however not suitable when the size of noisy data is large, as then the regularization condition implies that noise must be close to zero. In this paper, we propose a new strategy based on distributed learning --- \emph{distributed filtered hyperintepolation}, which assigns the data fitting task to multiple servers then synthesizes them as a global prediction model.

Spherical filtered hyperinterpolation, developed by Sloan and Womersley \cite{Sloan1995,SlWo2012}, is a constructive approach: given degree $L$ (which indicates the level of precision), use $N$ data on the sphere to find a filtered expansion of spherical harmonics up to degree $2L$. The filtering strategy uses an appropriate filter and the data at nodes of a quadrature rule. When the quadrature rule that is a set of pairs of points on the sphere and real weights is exact for numerical integration of polynomials up to degree $(c+1)L$ (see the definition of \eqref{eq:quadrature}), the filtered hyperinterpolation reaches the best polynomial approximation \cite{SlWo2012,WaSl2017}. The computational cost is thus determined by the number $N$ of data, which has at least order $\bigo{}{L^2}$ \cite{HeSlWo2010}. 
The cost becomes heavy as degree $L$ or the number of data increases. 
One way to reduce the computational burden is to distribute the approximation task to multiple servers, each of which works on a fraction of the total computation using a small proportion of all data, and then synthesize the computed fitting models of all servers to produce a global predictor. To achieve that, we apply \emph{distributed learning} strategy \cite{LiGuZh2017,LiZh2018} to spherical filtered hyperinterpolation, which leads to distributed filtered hyperinterpolation (DFH).

%The performance of distributed learning has been proved good for big data processing on Euclidean space, as verified by many algorithms such as Tikhonov regularization \cite{LiGuZh2017}, kernel-based
%gradient descent \cite{LiZh2018}, local average regression
%\cite{ChLiWa2017} and kernel-based spectral algorithms
%\cite{GuLiZh2017}.
 
The proposed DFH can fit $N$ noisy data $y_i=f^*(\mathbf x_i)+\epsilon_i$, $i=1,\dots,N$,
for continuous function $f^*$ on the sphere, and independent bounded noises $\epsilon_i$. We show that the approximation error of DFH for such noisy data depends on the number of data and the smoothness of the target function $f^*$. 
We also consider the DFH with independent random interpolation points, which we call \emph{distributed filtered hyperinterpolation with random sampling}, and prove that for distributions of the random points satisfying appropriate conditions, DFH has the same approximation capability as that with ``deterministic'' quadrature rule.

The rest of the paper is organized as follows. In Section~\ref{Sec.Filtered kernel}, we introduce filtered kernel, quadrature rule and ``ordinary'' spherical filtered hyperinterpolation. In Section~\ref{Sec.Fixed design}, we define distributed filtered hyperinterpolation with ``deterministic'' quadrature rule and give the relation of its approximation error and numbers of data and servers. Section~\ref{Sec.Random} defines the distributed filtered hyperinterpolation with random sampling and gives its error estimate. Section~\ref{sec:numer.exam} gives the numerical examples of the distributed filtered hyperinterpolation where we study the impact of the numbers of data, servers and noise on approximation error. Section~\ref{Sec.Proof} gives the proofs for the main results.

\section{Filtered Kernel, Quadrature Rule and Filtered Hyperinterpolation}\label{Sec.Filtered kernel}
In this section, we introduce the spherical filtered
hyperinterpolation defined by filtered kernels and quadrature
rule.

\subsection{Filtered  kernels}

 For $d\ge2$, let $\mathbf x\cdot\mathbf y$ be the inner product of
two points $\mathbf x,\mathbf y$ in $\mathbb R^{d+1}$, and the
Euclidean norm $|\mathbf x|:=\sqrt{\mathbf x\cdot\mathbf x}$. Let
$\mathbb S^d:=\{\mathbf x\in\mathbb R^{d+1}: |\mathbf x|=1\}$  be
the unit sphere of $\mathbb R^{d+1}$. The $\mathbb
S^d$ is a compact metric space with geodesic distance
\begin{equation*}
    \mbox{dist}(\mathbf x,\mathbf y):=\arccos(\mathbf x\cdot\mathbf y), \quad \mathbf
    x,\mathbf y\in\mathbb S^d
\end{equation*}
as the metric. For $1\leq p<\infty$, let $L_p(\mathbb S^d)$ be
the real-valued $L_p$ space on $\mathbb S^d$ with Lebesgue
measure $\omega:=\omega_d$ and $L_p$ norm $\|f\|_{L_p(\mathbb
S^d)}:=\left(\int_{\mathbb S^d}|f(\mathbf x)|^p\mathrm{d}\omega(\mathbf
x)\right)^{1/p}$ for $f\in L_p(\mathbb S^d)$. In particular,
$L_2(\mathbb S^d)$ is a Hilbert space with inner product
\begin{equation*}
    \langle f,g\rangle_{L_2(\mathbb S^d)}:=\int_{\mathbb S^d}
    f(\mathbf x)g(\mathbf x)\mathrm{d}\omega(\mathbf
       x), \quad f,g\in L_p(\mathbb S^d).
\end{equation*}
Denote by $L_{\infty}(\mathbb S^d)$ the space of all real-valued
continuous functions on $\mathbb S^d$ with uniform norm
$\|f\|_{L_\infty}:=\max_{\mathbf x\in\mathbb S^d} |f(\mathbf x)|$.
The volume of $\mathbb S^d$ is
$$
    |\mathbb S^d|:= \omega_d(\mathbb S^d) := \frac{2\pi^{\frac{d+1}{2}}}{\Gamma(\frac{d+1}{2})}.
$$

For $\ell\in\mathbb N_0:=\{0,1,\dots\}$, the restriction to $\mathbb S^d$ of a homogeneous harmonic polynomial of degree $\ell$ is called a spherical harmonic of degree $\ell$.  Let
$\mathbb{H}^{d}_{\ell}$ be the space of all spherical harmonics of
degree $\ell$, and for $L\in\mathbb N_0$, $\Pi_L^{d}$ the space of
all spherical polynomials of degree up to $L$. Then,
$\Pi_L^d=\bigoplus_{\ell=0}^{L}\mathbb H_{\ell}^d$. The dimension of
$\mathbb{H}^{d}_{\ell}$ is
\begin{equation*}
     Z_{d,\ell}:=\mbox{dim}\ \mathbb{H}^{d}_{\ell}=\left\{\begin{array}{ll}
                     \displaystyle\frac{2\ell+d-1}{\ell+d-1}{{\ell+d-1}\choose{\ell}}, & \ell\geq 1,\\[2mm]
                      1, & \ell=0,
                     \end{array}
                       \right.
\end{equation*}
and then the dimension of $\Pi_L^{d}$ is $\sum_{\ell=0}^L
Z_{d,\ell}=Z_{d+1,L} {\asymp} L^d$.  {Here for two sequences
$a_{\ell}$ and $b_{\ell}$, $\ell\in\mathbb N_0$, $a_{\ell}\asymp
b_{\ell}$ means that there exists constants $c,c'$ such that $c'
b_{\ell} \leq a_{\ell} \leq c b_{\ell}$ for all $\ell\in\mathbb
N_0$.} {The Laplace-Beltrami operator $\Delta^*$ on $\mathbb S^d$
has the eigenfunctions $Y_{\ell,m}$ and eigenvalues
$\lambda_{\ell}:=\ell(\ell+d-1)$, $\ell\in \mathbb N_0$,
$m=1,\dots,Z_{d,\ell}$:
\begin{equation*}
    \Delta^*Y_{\ell,m} = \lambda_\ell Y_{\ell,m}.
\end{equation*}}

Let $P_{\ell}^{(d+1)}(t)$, $t\in[-1,1]$, be the normalized
 Gegenbauer polynomial which satisfies
$ P_{\ell}^{(d+1)}(1)=1$ and  the orthogonality relation
$$
                      \int_{-1}^1P_{\ell}^{(d+1)}(t)P_{\ell'}^{(d+1)}(t)(1-t^2)^{\frac{d-2}2} \mathrm{d}t
                      =\frac{|\mathbb S^d|}{|\mathbb
                      S^{d-1}|Z_{d,\ell}}\delta_{\ell,\ell'},
$$
where $\delta_{\ell,\ell'}$ is the Kronecker symbol. Let
$\eta:[0,\infty)\rightarrow\mathbb R$ be a filter with specified
smoothness $\kappa\geq1$ satisfying
\begin{equation}\label{Filter}
              \eta\in C^{\kappa}(\mathbb R_+);\quad
               \mbox{supp} \:\eta\subseteq[1/2,2];\quad
               {\eta(t)^2+\eta(2t)^2}=1\ \mbox{for} \ t\in[1/2,1].
\end{equation}
The filtered kernel $K_n(\mathbf x\cdot \mathbf x')$, $n\geq1$, is
then given by
\begin{equation}\label{filtered kernel}
                     K_n(\mathbf x\cdot \mathbf x')=\sum_{\ell=0}^\infty\eta\left(\frac{\ell}{n}\right)
                     \frac{Z_{d,\ell}}{|\mathbb S^d|}P_{\ell}^{(d+1)}(\mathbf x\cdot
                     \mathbf x'),
\end{equation}
see \cite{NaPeWa2006}. The approximation property of the filtered kernel depends on the smoothness of the filter $\eta$. We refer readers to e.g. \cite[P.101]{SlWo2012} for examples of filtered kernels satisfying \eqref{Filter}. Since the support of $\eta$ is in $[1/2,2]$, $K_n(\mathbf x\cdot\mathbf y)$ is a polynomial of either $\mathbf x$ or $\mathbf y$ of degree up to $2n-1$.

\subsection{Spherical quadrature rules}
The geometric properties of a finite set $X_N:=\{\mathbf x_1,\dots,\mathbf x_N\}$, $N\geq2$, of points on $\mathbb S^d$
can be described by mesh norm, separation radius and mesh ratio, as
we introduce now. The \emph{mesh norm} (or covering radius) of $X_N$
is
\begin{equation*}
	h(X_N):=\max_{\mathbf x\in\mathbb S^d}\min_{\mathbf x_i\in X_N}\mbox{dist}(\mathbf x,\mathbf x_i).
\end{equation*}
The mesh norm is the minimal radius with which the  caps with
centers at points of $X_N$ covers $\mathbb S^d$. The separation
radius of $X_N$ is
\begin{equation*}
     \delta(X_N):=\frac12\min_{j\neq k}\mbox{dist}(\mathbf x_j,\mathbf x_k).
\end{equation*}
This is half the smallest geodesic distance between any pair of
 points in $X_N$. The mesh ratio of $X_N$ is the minimum of distances between points of $X_N$:
\begin{equation*}
    \rho(X_N):=\frac{h(X_N)}{\delta(X_N)}\geq1,
\end{equation*}
which measures how uniformly the points of $X_N$ are distributed on
$\mathbb S^d$. We say a sequence of point sets
$\{X_N\}_{N=2}^{\infty}$ \emph{$\tau$-quasi uniform}, if there is a
constant $\tau\geq 2$ such that $\rho(X_N)\leq \tau$ for all
$N\geq2$. The existence of a $\tau$-quasi uniform sequence of points
sets is proved in \cite{NaSuWaWe2007}. Assume the sequence of points
sets $\{X_N\}_{N=2}^{\infty}$ is $\tau$-quasi uniform, then
 {\begin{equation}\label{meshnorm}
    {h(X_N)}\leq \tau \delta(X_N)\leq\frac{\tau}{2N^{1/d}}.
\end{equation}}
It then follows from \eqref{meshnorm} and \cite[Lemma
2]{LiCaChXu2012} that
\begin{equation*}
    \mathbb S^{d}\subseteq\bigcup_{\mathbf x_i\in X_N}\mathcal C(\mathbf x_i,\tau/(2N^{1/d}))\ \
\mbox{and}\ \ \max_{\mathbf x_i\in X_N}\left|X_N\bigcap \mathcal
C(\mathbf{x}_i,\tau/(2N^{1/d}))\right|\leq 2\pi^{d-1}\tau^d,
\end{equation*}
where $|A|$  {is} the cardinality of the finite set $A$ and  {
 $\mathcal C(\mathbf x,r):=\{\mathbf y\in\mathbf S^{d}: \mbox{dist}(\mathbf x,\mathbf y)\leq r\}$ the spherical cap with center $\mathbf x$
and radius $r$, $r>0$.}

 We say a set $\mathcal Q_N:=\{(w_i,\mathbf x_i): w_i\in \mathbb R
\hbox{~and~} \mathbf x_i\in\mathbb S^d, i=1,\dots,N\}$, $N\geq2$, a
\emph{quadrature rule} on $\mathbb S^d$, where $w_i$ are called weights of $\mathcal Q_N$. For $L\geq0$, a quadrature rule $\mathcal Q_N$ is said to be exact for polynomials of degree up to $L$ if
\begin{equation}\label{eq:quadrature}
    \int_{\mathbb S^d}P(\mathbf x)\mathrm{d}\omega(\mathbf x)=\sum_{i=1}^{N}w_i P(\mathbf x_i) \quad \forall P\in \Pi_L^d.
\end{equation}

The following lemma gives a sequence of polynomial-exact quadrature rules whose point sets are $\tau$-quasi uniform, see \cite[Theorem 3.1]{BrDa2005} and
also \cite{MhNaWa2001}.
\begin{lemma}[\cite{BrDa2005,MhNaWa2001}]\label{Lemma:fixed cubature}
If  $\{X_N\}_{N=2}^{\infty}$ is $\tau$-quasi uniform, then for
$N\geq2$, there exist positive weights $w_i$, $i=1,\dots,N$, such
that $0\leq w_i\leq c_2N^{-1}$ and
$$
    \int_{\mathbb S^d}P(\mathbf x)\mathrm{d}\omega(\mathbf x)
    = \sum_{\mathbf x_i\in X_N}w_iP(\mathbf x_i)\quad\forall P\in\Pi_{c_3N^{1/d}}^{d},
$$
where $c_2$ and $c_3$ are constants depending only on $\tau$ and
$d$.
\end{lemma}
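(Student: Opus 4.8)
The plan is to derive the lemma from a standard principle in scattered-data analysis: a point set whose mesh norm is small compared with $1/L$ carries a positive quadrature rule that is exact on $\Pi_L^d$ and whose weights are comparable with $N^{-1}$. First I would quantify the smallness. By \eqref{meshnorm} the $\tau$-quasi uniformity gives $h(X_N)\le \tau/(2N^{1/d})$, so setting $L:=c_3N^{1/d}$ makes $h(X_N)\,L\le \tau c_3/2$; I would then fix $c_3=c_3(\tau,d)$ small enough that this product lies below a threshold $\gamma_d$ (pinned down in the last step), which in particular forces $\dim\Pi_L^d\asymp L^d\le N$.

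The analytic core is a Marcinkiewicz--Zygmund (norming) inequality for the natural weights. I would decompose $\mathbb S^d$ into a Voronoi-type partition $\{R_i\}_{i=1}^N$ with $\mathbf x_i\in R_i$, $\mathrm{diam}(R_i)\le c\,h(X_N)$ and $|R_i|=\omega_d(R_i)\asymp N^{-1}$; the cap covering and the cardinality bound recorded just after \eqref{meshnorm} guarantee such a partition for a $\tau$-quasi uniform set. Writing $w_i^0:=|R_i|$, I would bound the quadrature error of $w^0$ on $\Pi_L^d$ by
$$
\Big|\int_{\mathbb S^d}P(\mathbf x)\,\mathrm{d}\omega(\mathbf x)-\sum_{i=1}^N w_i^0 P(\mathbf x_i)\Big|\le\sum_{i=1}^N\int_{R_i}|P(\mathbf x)-P(\mathbf x_i)|\,\mathrm{d}\omega(\mathbf x)\le \theta\,\|P\|_{L_1(\mathbb S^d)},
$$
where $\theta=\theta(\tau c_3)$ can be made strictly less than $\tfrac12$ once $h(X_N)L\le\gamma_d$. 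Applying the same bound to $|P|$ and using $\big||a|-|b|\big|\le|a-b|$ yields the lower norming inequality $\sum_i w_i^0|P(\mathbf x_i)|\ge(1-\theta)\|P\|_{L_1(\mathbb S^d)}$, which in turn shows that $P\mapsto(P(\mathbf x_i))_{i=1}^N$ is injective on $\Pi_L^d$.

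With the norming inequality in hand I would produce nonnegative exact weights by a separation (Farkas) argument. Since the evaluation functionals $\delta_{\mathbf x_i}\colon P\mapsto P(\mathbf x_i)$ separate points of $\Pi_L^d$, they span $(\Pi_L^d)^*$, and the cone $K$ they generate with nonnegative coefficients is closed. If the integration functional $I\colon P\mapsto\int_{\mathbb S^d}P\,\mathrm{d}\omega$ were not in $K$, a separating hyperplane would give $P^*\in\Pi_L^d$ with $P^*(\mathbf x_i)\le0$ for every $i$ yet $\int_{\mathbb S^d}P^*\,\mathrm{d}\omega>0$; but then $\sum_i w_i^0 P^*(\mathbf x_i)=-\sum_i w_i^0|P^*(\mathbf x_i)|\le-(1-\theta)\|P^*\|_{L_1(\mathbb S^d)}$, and combining with the error bound gives $\int_{\mathbb S^d}P^*\,\mathrm{d}\omega\le(2\theta-1)\|P^*\|_{L_1(\mathbb S^d)}<0$, a contradiction. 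Hence positive weights realizing \eqref{eq:quadrature} on $\Pi_{c_3N^{1/d}}^{d}$ exist. For the size bound I would test exactness against a localized nonnegative polynomial: taking $P_i(\mathbf x):=K_n(\mathbf x_i\cdot\mathbf x)^2$ with $n\asymp L$ keeps $\deg P_i\le L$ and $P_i\ge0$, while the standard estimates $P_i(\mathbf x_i)=K_n(1)^2\asymp n^{2d}$ and $\int_{\mathbb S^d}P_i\,\mathrm{d}\omega=\|K_n(\mathbf x_i\cdot\,\cdot)\|_{L_2(\mathbb S^d)}^2\asymp n^{d}$ hold for the filtered kernel \eqref{filtered kernel}. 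Nonnegativity and exactness then give $w_iP_i(\mathbf x_i)\le\sum_{j}w_jP_i(\mathbf x_j)=\int_{\mathbb S^d}P_i\,\mathrm{d}\omega$, whence $0\le w_i\le \int_{\mathbb S^d}P_i\,\mathrm{d}\omega\big/P_i(\mathbf x_i)\asymp n^{-d}\asymp N^{-1}$, which is the claimed $w_i\le c_2N^{-1}$ with $c_2=c_2(\tau,d)$.

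The hard part is the $L_1$ quadrature-error estimate behind the norming inequality. Passing from $\sum_i\int_{R_i}|P(\mathbf x)-P(\mathbf x_i)|\,\mathrm{d}\omega\le h(X_N)\sum_i|R_i|\sup_{R_i}|\nabla^* P|$ to a clean bound $\theta\,\|P\|_{L_1(\mathbb S^d)}$ requires two ingredients: replacing the cellwise supremum $\sup_{R_i}|\nabla^* P|$ of the surface gradient by a local average over a slightly enlarged cell (a Remez/Nikolskii-type local inequality for the derivative, available because $h(X_N)L$ is small, so that $\nabla^* P$ varies little across each $R_i$), and then the $L_1$ Bernstein inequality $\|\nabla^* P\|_{L_1(\mathbb S^d)}\le C\,L\,\|P\|_{L_1(\mathbb S^d)}$ on $\Pi_L^d$. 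Tracking the constants in these two steps is what pins down the admissible threshold $\gamma_d$, hence $c_3$, and finally $c_2$. This is precisely the estimate carried out in \cite{MhNaWa2001,BrDa2005}, on which the stated lemma rests.
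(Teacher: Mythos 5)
The paper never proves Lemma~\ref{Lemma:fixed cubature}: it is imported verbatim from \cite[Theorem 3.1]{BrDa2005} and \cite{MhNaWa2001}, so the only meaningful comparison is with the cited argument and with the paper's closest in-house analogue, the proof of Theorem~\ref{Theorem:random cubature}. Your reconstruction is correct in outline and is essentially the classical proof from those references: mesh-norm smallness $h(X_N)\,L\leq \tau c_3/2$ from \eqref{meshnorm}, a Voronoi-type partition giving an $L_1$ Marcinkiewicz--Zygmund (norming) inequality with defect $\theta<\tfrac12$, injectivity of the sampling operator, and existence of nonnegative exact weights. Your separation argument is a self-contained, hands-on version of the positivity assertion of Lemma~\ref{Lemma:norming set} (the chain $\int_{\mathbb S^d}P^*\,\mathrm{d}\omega\leq(2\theta-1)\|P^*\|_{L_1(\mathbb S^d)}<0$ is exactly right, since $\sum_i w_i^0|P^*(\mathbf x_i)|\geq(1-\theta)\|P^*\|_{L_1(\mathbb S^d)}$ follows from $\bigl||a|-|b|\bigr|\leq|a-b|$ as you say, and a finitely generated cone in a finite-dimensional space is closed). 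The instructive contrast with the paper is in how the MZ inequality is obtained: in Theorem~\ref{Theorem:random cubature} it comes probabilistically, via the concentration inequality (Lemma~\ref{Lemma:Concentration inequality 1.1}) and covering numbers, yielding exactness only with high probability and an $\ell_2$ weight bound; your deterministic mesh-norm-plus-Bernstein route is what the quasi-uniformity hypothesis buys, and it is the correct trade here since the lemma asserts a deterministic rule with a pointwise bound $w_i\leq c_2N^{-1}$.

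Two caveats, neither fatal. First, your weight bound via the test polynomial $P_i=K_n(\mathbf x_i\cdot\,\cdot)^2$ rests on $K_n(1)\asymp n^d$; the identity $\int_{\mathbb S^d}P_i\,\mathrm{d}\omega=\sum_\ell \eta(\ell/n)^2Z_{d,\ell}/|\mathbb S^d|\asymp n^d$ is safe (Parseval plus the addition theorem), but $K_n(1)=\sum_\ell\eta(\ell/n)Z_{d,\ell}/|\mathbb S^d|$ requires sign control of $\eta$, and \eqref{Filter} admits needlet-type filters supported in $[1/2,2]$ that are not asserted to be nonnegative, so cancellation is not formally excluded; you should either assume $\eta\geq0$ (as standard constructions do), use a different nonnegative localized polynomial, or avoid the step entirely as \cite{MhNaWa2001} do by equipping $\mathbb R^{N}$ with the norm $\frac1N\sum_{i=1}^N|v_i|$, so that the $L_1$ MZ inequality gives $\|T_{\mathcal Z}^{-1}\|\leq(1-\theta)^{-1}$ and the dual-norm conclusion of Lemma~\ref{Lemma:norming set} returns $\max_i Nw_i\leq c_2$ directly (also note the minor bookkeeping that $\deg P_i\leq 4n-2$, so $n$ must be taken about $L/4$). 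Second, the genuinely hard quantitative step --- replacing the cellwise suprema of $\nabla^* P$ by local averages and invoking the $L_1$ Bernstein inequality so that $\theta<\tfrac12$, which is what pins down $c_3(\tau,d)$ --- you correctly identify but defer to \cite{MhNaWa2001,BrDa2005}. Since the statement under review is itself a citation of those works, that deferral is acceptable, but it should be understood that your argument is a faithful reconstruction of the cited proof rather than an independent one.
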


For $1\leq p<\infty$, let $L_{p,\mu}:=L_{p}(\mathbb S^{d},\mu)$ be the $L_p$ space on $\mathbb S^d$ with respect to a probability measure
$\mu$, endowed with norm $\|f\|_{p,\mu}:=\left(\int_{\mathbb S^d}
|f(\mathbf x)|^p \mathrm{d}\mu(\mathbf x)\right)^{1/p}$. The following
theorem shows that if $X_N=\{\mathbf x_i\}_{i=1}^N\subset\mathbb
S^{d}$ is a set of i.i.d. random points with distribution
$\mu$, then with high probability, the quadrature rule is exact
for polynomials with a specific degree.

\begin{theorem}\label{Theorem:random cubature}
 Let $X_N=\{\mathbf x_i\}_{i=1}^{N}$ be i.i.d. random points on $\sph{d}$ with distribution $\mu$ which satisfies
\begin{equation}\label{condition on distribution}
         \|f\|_{L_1(\mathbb S^d)} \leq c_4 \|f\|_{1,\mu}\quad\forall
         f\in L_1(\mathbb S^d)\cap L_{1,\mu}
\end{equation}
 for a positive absolute constant $c_4$. Then, for integer $N$ satisfying $N/n^{2d}>c$ for a sufficiently large constant $c$,
  there exits a quadrature rule $\mathcal Q_N:=\{(\mathbf x_i,w_{i,n})\}_{i=1}^{N}$ such that
\begin{equation*}
	\int_{\mathbb S^d}P_{n}(\mathbf x)\mathrm{d}\mu(\mathbf x)=
    \sum_{i=1}^{N}w_{i,n}P_{n}(\mathbf x_i) \quad\forall P_n\in
    \Pi_n^d
\end{equation*}
holds, and $\sum_{i=1}^{N}| {w_{i,n}}|^2\leq\frac{2}{{N}}$ and $w_{i,n}\geq0$ for all $i=1,\dots,N$, with confidence at least $1-4\exp\left\{-CN/n^d\right\}$,
where $C$ is a constant depending only on $c_4$ and $d$. 
\end{theorem}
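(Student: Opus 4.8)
The plan is to reduce the whole statement to a spectral (Marcinkiewicz--Zygmund type) estimate for the random moment matrix and then read the weights off a least--squares construction. Fix an $L_2(\mathbb S^d)$--orthonormal basis $\{Y_{\ell,m}\}$ of $\Pi_n^d$, collect it into a vector $\mathbf Y(\mathbf x)$ of length $D:=\dim\Pi_n^d\asymp n^d$, and record two deterministic facts. By the addition theorem, $\|\mathbf Y(\mathbf x)\|^2=\sum_{\ell\le n}Z_{d,\ell}/|\mathbb S^d|\asymp n^d$ uniformly in $\mathbf x$, which controls the per--sample operator norm. Writing the Gram matrix $G:=\int_{\mathbb S^d}\mathbf Y(\mathbf x)\mathbf Y(\mathbf x)^{\!\top}\,\mathrm d\mu(\mathbf x)$, the density lower bound encoded in \eqref{condition on distribution} (equivalently $\mathrm d\mu/\mathrm d\omega\ge 1/c_4$) gives, for every $P=\sum_k a_k Y_k$, the inequality $\|P\|_{L_2(\mu)}^2\ge c_4^{-1}\|P\|_{L_2(\mathbb S^d)}^2$, that is $G\succeq c_4^{-1}I_D$.

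The probabilistic core is a matrix Chernoff bound applied to the i.i.d.\ rank--one terms $A_i:=G^{-1/2}\mathbf Y(\mathbf x_i)\mathbf Y(\mathbf x_i)^{\!\top}G^{-1/2}$, which satisfy $\mathbf{E}\{A_i\}=I_D$ and $\lambda_{\max}(A_i)\le \|G^{-1}\|\,\|\mathbf Y(\mathbf x_i)\|^2\le c_4\Lambda$ with $\Lambda\asymp n^d$. With $M:=\tfrac1N\sum_i \mathbf Y(\mathbf x_i)\mathbf Y(\mathbf x_i)^{\!\top}$, the two--sided Chernoff inequality yields $\tfrac12 G\preceq M\preceq\tfrac32 G$ with failure probability at most $2D\exp(-cN/(c_4 n^d))$. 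Here the hypothesis $N/n^{2d}>c$ is exactly what lets me absorb the dimensional prefactor: since $N/n^d>cn^d\gg\log D$, the bound collapses to $1-4\exp(-CN/n^d)$, matching the stated confidence, and this is the one place the full strength of the sampling condition is used.

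Granted the lower bound $M\succeq\tfrac12 G$ (so that $M$ is invertible), I would take the minimum--norm weights solving the exactness equations. Let $\Phi:=[\mathbf Y(\mathbf x_1),\dots,\mathbf Y(\mathbf x_N)]$ and $\mathbf b:=\int \mathbf Y\,\mathrm d\mu$; the vector $\mathbf w:=\Phi^{\!\top}(\Phi\Phi^{\!\top})^{-1}\mathbf b$ satisfies $\Phi\mathbf w=\mathbf b$, which is precisely exactness on $\Pi_n^d$. Since $\Phi\Phi^{\!\top}=NM$, one gets $\|\mathbf w\|_2^2=\tfrac1N\mathbf b^{\!\top}M^{-1}\mathbf b\le \tfrac2N\,\mathbf b^{\!\top}G^{-1}\mathbf b$. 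The last quadratic form equals $1$: $G^{-1}\mathbf b$ is the coefficient vector of the $\mu$--orthogonal projection of the constant function onto $\Pi_n^d$, which is the constant itself because $1\in\Pi_n^d$, whence $\mathbf b^{\!\top}G^{-1}\mathbf b=\int 1\,\mathrm d\mu=1$. This delivers both the exactness and the clean bound $\sum_i w_i^2\le 2/N$, with the constant $2$ traceable to the factor $1/\lambda_{\min}$ in $M\succeq\tfrac12 G$.

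The genuine obstacle is nonnegativity of the weights. The least--squares weights read $w_i=Q(\mathbf x_i)/N$ with $Q:=\sum_k(M^{-1}\mathbf b)_kY_k\in\Pi_n^d$, and $Q\equiv1$ when $M=G$; but forcing $Q$ uniformly close to $1$ would require $\|G^{-1/2}MG^{-1/2}-I\|\le c\,n^{-d/2}$ (to beat the Nikolskii factor $n^{d/2}$ relating $L_\infty$ to $L_2$ on $\Pi_n^d$), and matrix Chernoff delivers this only with confidence $\exp(-cN/n^{2d})$ and an extra logarithmic oversampling --- incompatible with both the hypothesis and the asserted confidence. I would therefore obtain positivity structurally rather than perturbatively: a covering argument shows that i.i.d.\ $\mu$--points have mesh norm $h(X_N)\le c_0/n$ off an event of probability at most $c\,n^d\exp(-cN/n^d)\le 4\exp(-CN/n^d)$ (again using only the lower density bound to bound cap measures from below), after which a Marcinkiewicz--Zygmund--type existence theorem in the spirit of \cite{MhNaWa2001} produces nonnegative exact--quadrature weights of size $w_i\asymp1/N$. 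Reconciling this positive rule with the sharp $\ell_2$ constant --- exhibiting a \emph{single} weight vector that is simultaneously nonnegative and obeys $\sum_i w_i^2\le 2/N$ --- is the delicate point I expect to absorb most of the work; I would attempt it by running the min--norm construction inside the nonnegative feasible cone guaranteed by the mesh--norm estimate and controlling its value through the same identity $\mathbf b^{\!\top}G^{-1}\mathbf b=1$.
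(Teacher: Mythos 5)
Your spectral route to the $p=2$ Marcinkiewicz--Zygmund estimate is sound: the deduction $G\succeq c_4^{-1}I$ follows correctly by applying \eqref{condition on distribution} to $f=P_n^2$ (no density argument needed), the matrix Chernoff bound with per-sample norm $c_4\Lambda\asymp n^d$ gives $\tfrac12 G\preceq M\preceq\tfrac32 G$ up to a dimensional prefactor absorbed exactly as you say by $N/n^{2d}>c$, and the identity $\mathbf b^{\top}G^{-1}\mathbf b=\int 1\,\mathrm d\mu=1$ (valid because $1\in\Pi_n^d$ and $G$ is invertible) cleanly yields min-norm weights with $\sum_i w_i^2\le 2/N$ and exactness on $\Pi_n^d$. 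This parallels, in linear-algebraic form, what the paper obtains via covering numbers and a scalar Bernstein-type concentration inequality. But the theorem demands a \emph{single} weight vector that is simultaneously nonnegative and satisfies $\sum_i w_i^2\le 2/N$, and this is precisely where your argument stops: you correctly diagnose that the perturbative route ($Q$ uniformly near $1$) is out of reach at the stated confidence, but your structural fallback does not close the gap either. For i.i.d.\ points with $N\gg n^{2d}$, a mesh-norm bound $h(X_N)\le c_0/n$ combined with an MZ-type existence theorem produces nonnegative weights of size comparable to cap measures, i.e.\ $w_i\lesssim n^{-d}$, \emph{not} $w_i\asymp 1/N$: random points are far from quasi-uniform (their separation radius is tiny), so no mesh-norm argument can equidistribute the weights. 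Since nonnegative weights summing to $1$ satisfy only $\sum_i w_i^2\le\max_i w_i$, this route yields $\sum_i w_i^2\lesssim n^{-d}$, which dwarfs $2/N$ when $N>cn^{2d}$. Your closing sentence concedes the reconciliation is unresolved, so the proof is incomplete at its most essential point.

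The missing ingredient is the norming-set lemma of Mhaskar--Narcowich--Ward (Lemma~\ref{Lemma:norming set} in the paper), a Hahn--Banach/Krein-type extension result whose two assertions are coupled: it produces weights representing the functional $g(P_n)=\int_{\mathbb S^d}P_n\,\mathrm d\mu$ that satisfy the dual-norm bound $\bigl(N\sum_i w_{i,n}^2\bigr)^{1/2}\le\|g\|_{\mathcal X^*}\,\|T_{\mathcal Z}^{-1}\|$ \emph{and} can be chosen nonnegative, provided $g(P_n)\ge 0$ whenever $P_n|_{X_N}\ge 0$. Your $M\succeq\tfrac12 G$ estimate would slot in directly as the bound $\|T_{\mathcal Z}^{-1}\|\le\sqrt{2}$ (the paper gets $\sqrt{5/3}$ from its $p=2$ MZ inequality). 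What you lack is the verification of the cone-positivity hypothesis, which the paper obtains from a second, $p=1$ round of concentration: with confidence $1-2\exp\{-\tilde C_4N/n^d\}$ one has $\bigl|\int_{\mathbb S^d}P_n\,\mathrm d\mu-\tfrac1N\sum_{i}P_n(\mathbf x_i)\bigr|\le\tfrac14\|P_n\|_{1,\mu}$, while the $p=1$ MZ inequality gives $\|P_n\|_{1,\mu}\le 3\cdot\tfrac1N\sum_i|P_n(\mathbf x_i)|$; combining, $g(P_n)\ge\tfrac14\cdot\tfrac1N\sum_iP_n(\mathbf x_i)\ge 0$ for every $P_n$ nonnegative on $X_N$. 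Note that your purely spectral ($p=2$) machinery does not suffice here: the $p=1$ estimates are genuinely needed for the positivity step, which is why the paper runs its concentration argument for both $p=1$ and $p=2$.
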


We give the proof of Theorem~\ref{Theorem:random cubature} in
Section~\ref{Sec.Proof}. The confidence level in Theorem~\ref{Theorem:random cubature} is exponential of $n$ and $N$, which is different from polynomial confidence level given by \cite[Theorem~4.1]{LeMh2008}. This exponential relation plays a crucial role in error estimation for the distributed filtered hyperinterpolation for spherical noisy data. It is also different from \cite{Lin2018} in that the established quadrature rule satisfying \eqref{condition on distribution} holds with positive weights. The condition of the distribution in \eqref{condition on distribution} describes the distortion of $\mu$ from the uniform distribution on $\sph{d}$ that is also the spherical Lebesgue measure. The probabilistic quadrature rule in Theorem~\ref{Theorem:random cubature} is critical to the construction of distributed filtered hyperinterpolation with random sampling (see Section~\ref{sec:distrfih.ransamp}).

\subsection{Spherical filtered hyperinterpolation approximation}
Using filtered kernel and quadrature rule, we define the
spherical filtered hyperinterpolation, as follows.
\begin{definition}[\cite{SlWo2012}] Let $d\geq2$ and $1\leq p\leq\infty$. 
    For $f\in L_p(\mathbb S^d)$, the filtered hyperinterpolation
(approximation) with a quadrature rule $\mathcal Q_N:=\{(w_i,\mathbf
x_i)\}_{i=1}^{N}$ is
\begin{equation}\label{filtered hi}
       V_{n,N}(f;\mathbf x):=\sum_{i=1}^N w_{i}f(\mathbf x_i)K_n(\mathbf x_i\cdot \mathbf x),\quad n\geq1.
\end{equation}
\end{definition}

The approximation property of the filtered hyperinterpolation
depends on the smoothness of the function space.  For $r\in\mathbb
R_+$, let {$$
       b_{\ell}^r:=(1+\lambda_\ell)^{r/2}\asymp (1+\ell)^r.
$$}
Let $\{Y_{\ell,m}:\ell=0,1,\dots,\: m=1,\dots,Z_{d,\ell}\}$ be an
orthonormal basis for the space $L_2(\mathbb S^d)$, and 
$$
    \widehat{f}_{\ell m}:=\langle f,Y_{\ell,m}\rangle_{L_2(\mathbb S^d)}
    :=\int_{\mathbb S^d}f(\mathbf x)Y_{\ell m}(\mathbf
    x)\mathrm{d}\omega(\mathbf x)
$$
the Fourier coefficients of $f\in L_2(\mathbb S^d)$. For
$d\geq2$, $1\leq p\leq \infty$ and $r\in\mathbb R_+$, the Sobolev
space $\mathbb W_p^r(\mathbb S^d)$ is the space of functions $f$
satisfying $\sum_{\ell=0}^{\infty}b_{\ell}^r \widehat{f}_{\ell
m}Y_{\ell,m}\in L_p(\mathbb S^d)$, endowed with the norm
$\|f\|_{\mathbb W^r_p(\mathbb
S^d)}:=\left\|\sum_{\ell=0}^{\infty}b_{\ell}^r \widehat{f}_{\ell m}
Y_{\ell,m}\right\|_{L_p(\mathbb S^d)}$.

 {The following lemma, which is proved by Wang and Sloan
\cite{WaSl2017}, shows the relation of the approximation error of
the filtered hyperinterpolation approximation $V_{n,N}$ and the
smoothness of the function space under the condition that the
associated quadrature rule is exact for polynomials of degree up to
$3n-1$.}

\begin{lemma}[\cite{WaSl2017}]\label{Lemma:Filtered h app}
Let $d\geq 2$, $1\leq p\leq \infty$ and $r>d/p$. Let $V_{n,N}$ be
the filtered hyperinterpolation in \eqref{filtered hi} with
quadrature rule $\mathcal Q_N$ exact for polynomials of
degree up to $3n-1$ and with the filter $\eta$ in $C^\kappa(\mathbb R_+)$, $\kappa\geq\left\lfloor\frac{d+3}2\right\rfloor$. Then, for $f\in \mathbb W_p^r(\mathbb S^d)$,
\begin{equation}\label{eq:fihapp.ub}
    \|f-V_{n,N}(f)\|_{L_p(\mathbb S^d)}\leq c_5\: n^{-r}\|f\|_{\mathbb W_p^r(\mathbb S^d)},
\end{equation}
where $c_5$ depends only upon $d,p,r$ and $\eta$.  {The order
$n^{-r}$ in \eqref{eq:fihapp.ub} is optimal.}
\end{lemma}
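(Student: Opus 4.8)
The plan is to reduce the Sobolev error bound to the error of best polynomial approximation, exploiting two structural features of $V_{n,N}$: exact reproduction of spherical polynomials of degree $n$, and uniform $L_p$ stability. Note first that the hypothesis $r>d/p$ guarantees, through the Sobolev embedding $\mathbb W_p^r(\mathbb S^d)\hookrightarrow C(\mathbb S^d)$, that the point evaluations $f(\mathbf x_i)$ in \eqref{filtered hi} are meaningful, so $V_{n,N}(f)$ is well defined. Granting reproduction, $V_{n,N}(P)=P$ for every $P\in\Pi_n^d$, whence
\begin{equation*}
\|f-V_{n,N}(f)\|_{L_p(\mathbb S^d)}\le\|f-P\|_{L_p(\mathbb S^d)}+\|V_{n,N}(f-P)\|_{L_p(\mathbb S^d)}.
\end{equation*}
If the stability estimate $\|V_{n,N}(f-P)\|_{L_p(\mathbb S^d)}\le c\,\|f-P\|_{L_p(\mathbb S^d)}$ holds with $c$ independent of $n$ and $N$, taking the infimum over $P\in\Pi_n^d$ gives $\|f-V_{n,N}(f)\|_{L_p(\mathbb S^d)}\le(1+c)\,E_n(f)_p$, where $E_n(f)_p:=\inf_{P\in\Pi_n^d}\|f-P\|_{L_p(\mathbb S^d)}$. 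The classical Jackson inequality $E_n(f)_p\le c\,n^{-r}\|f\|_{\mathbb W_p^r(\mathbb S^d)}$ then yields \eqref{eq:fihapp.ub}, and optimality of $n^{-r}$ follows from the matching Bernstein inequality tested on a single spherical harmonic.

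The reproduction step is where the exactness degree $3n-1$ enters. For $P\in\Pi_n^d$ and fixed $\mathbf x$, the function $\mathbf y\mapsto P(\mathbf y)K_n(\mathbf y\cdot\mathbf x)$ is, by \eqref{filtered kernel}, a spherical polynomial of degree at most $n+(2n-1)=3n-1$; exactness of $\mathcal Q_N$ therefore turns the sum \eqref{filtered hi} into the integral $\int_{\mathbb S^d}P(\mathbf y)K_n(\mathbf y\cdot\mathbf x)\,\mathrm d\omega(\mathbf y)$. Expanding $P$ in $\{Y_{\ell,m}\}$, applying the addition theorem, and using that the filter equals one on $[0,1]$ (the normalisation that makes $K_n$ fix $\Pi_n^d$), this integral collapses to $\sum_{\ell\le n}\sum_m\widehat P_{\ell m}Y_{\ell,m}(\mathbf x)=P(\mathbf x)$.

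The substantive work, and the origin of the smoothness requirement $\kappa\ge\lfloor(d+3)/2\rfloor$, is the $L_p$ stability. I would begin from a localisation estimate for the filtered kernel: since $\eta\in C^\kappa(\mathbb R_+)$, repeated summation by parts in \eqref{filtered kernel}, together with the intrinsic decay of the Gegenbauer polynomials $P_\ell^{(d+1)}$ (worth an extra exponent $(d-1)/2$), gives
\begin{equation*}
|K_n(\mathbf x\cdot\mathbf y)|\le\frac{c\,n^{d}}{\bigl(1+n\,\mathrm{dist}(\mathbf x,\mathbf y)\bigr)^{\kappa+(d-1)/2}},\qquad\mathbf x,\mathbf y\in\mathbb S^d.
\end{equation*}
The threshold $\kappa\ge\lfloor(d+3)/2\rfloor$ is exactly what forces the decay exponent $\kappa+(d-1)/2$ above $d$, so that $K_n$ is uniformly integrable, $\int_{\mathbb S^d}|K_n(\mathbf x\cdot\mathbf y)|\,\mathrm d\omega(\mathbf y)\le c$. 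Combining this with the positive weights $0\le w_i\le c_2N^{-1}$ and the $\tau$-quasi-uniformity of $X_N$ (through the covering and packing bounds recorded after \eqref{meshnorm}), the discrete sum $\sum_i w_i|K_n(\mathbf x_i\cdot\mathbf x)|$ is comparable to that integral and hence uniformly bounded.

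The hardest point, and where I expect most of the effort to go, is that $V_{n,N}$ acts only through point values, so a naive bound $\|V_{n,N}g\|_{L_p}\le c\|g\|_{L_p}$ fails for arbitrary continuous $g$ as $p\to1$ (a spike concentrated below the nodal scale $n^{-1}$ is a counterexample). The stability for the relevant argument $g=f-P$ must instead be realised through a Littlewood--Paley (needlet) decomposition of $g$ into spherical-polynomial blocks: on each polynomial block the Marcinkiewicz--Zygmund inequality for the positive-weight, quasi-uniform quadrature converts the point evaluations back into genuine $L_p$ norms, and the regularity $f\in\mathbb W_p^r(\mathbb S^d)$ guarantees the resulting series of blocks is summable. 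Assembling these pieces closes the stability estimate and, with the first paragraph, completes the proof.
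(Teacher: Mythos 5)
A framing point first: the paper contains no proof of this lemma --- it is imported verbatim from Wang and Sloan \cite{WaSl2017} --- so your proposal can only be measured against the cited source and the supporting lemmas the paper does state. Measured that way, your architecture is the right one and is essentially that of \cite{WaSl2017}: reproduction of $\Pi_n^d$ from the degree-$(3n-1)$ exactness (your degree count $n+(2n-1)=3n-1$ is exactly the point), a localized-kernel estimate in which the decay exponent is $\kappa+(d-1)/2$ rather than the bare $\kappa$ of Lemma~\ref{Lemma:Localization} --- this sharpening, coming from the Gegenbauer asymptotics and proved in \cite{WaLeSlWo2017}, is precisely why $\kappa\geq\lfloor\frac{d+3}{2}\rfloor$, i.e.\ $\kappa>(d+1)/2$, suffices for the uniform $L_1$ bound of Lemma~\ref{Lemma:bound for filtered norm} --- and, for $p<\infty$, a Littlewood--Paley-plus-Marcinkiewicz--Zygmund argument replacing naive $L_p$ stability, whose failure near $p=1$ you correctly diagnose with the sub-nodal spike. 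Your one-line optimality argument (a spherical harmonic of degree $\asymp 2n$, orthogonal to $\Pi_{2n-1}\ni V_{n,N}f$) is also sound in outline.

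The gap is that your first and last paragraphs do not fit together, and the part you defer is the actual content of the lemma. Once you abandon $\|V_{n,N}g\|_{L_p}\leq c\|g\|_{L_p}$ (rightly, for $p<\infty$), you lose the right to take the infimum over $P\in\Pi_n^d$: what one can prove is $\|V_{n,N}(f-P)\|_{L_p(\mathbb S^d)}\leq c\left(\sum_i w_i|(f-P)(\mathbf x_i)|^p\right)^{1/p}$ (H\"older against the discrete measure, using both the node-sum bound $\sup_{\mathbf x}\sum_i w_i|K_n(\mathbf x_i\cdot\mathbf x)|\leq c$ and the $L_1$ kernel bound), and this discrete norm must then be estimated for one \emph{concrete} near-best $P$ (e.g.\ a de la Vall\'ee Poussin-type filtered mean), via the MZ inequality for positive-weight rules at degrees exceeding the exactness level, $\sum_i w_i|Q(\mathbf x_i)|^p\leq c\,(1+\deg Q/n)^{d}\|Q\|_{L_p(\mathbb S^d)}^p$, followed by the summation $\sum_{2^j> n}(2^j/n)^{d/p}2^{-jr}\lesssim n^{-r}$, which is where $r>d/p$ enters. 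You name these ingredients but prove none of them, so the hardest third of the lemma remains open in your write-up. Two smaller points. (i) You silently assume positive weights and $\tau$-quasi-uniformity, neither of which appears in the lemma's hypotheses. Positivity is genuinely needed (with signed weights, exactness gives no control on $\sum_i w_i|K_n(\mathbf x_i\cdot\mathbf x)|$, and the bound cannot hold with $c_5$ independent of the rule) and is implicit in \cite{WaSl2017}; quasi-uniformity, however, is dispensable, since positivity plus exactness already yields the Christoffel-type bound $\sum_{\mathbf x_i\in\mathcal C(\mathbf x,r)}w_i\leq c\max(r,1/n)^d$ --- and this matters, because the paper later applies the lemma to i.i.d.\ random nodes from Theorem~\ref{Theorem:random cubature}, which are not quasi-uniform. (ii) Your reproduction step needs $\eta\equiv1$ on $[0,1]$, which contradicts the paper's literal condition \eqref{Filter} (support in $[1/2,2]$ annihilates all degrees $\ell<n/2$) but is the normalization actually used in \cite{WaSl2017} and tacitly in Lemma~\ref{Lemma:Localization}; you adopted the intended filter, but should flag the inconsistency rather than pass over it.
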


\section{Distributed Filtered Hyperinterpolation with Deterministic
Sampling}\label{Sec.Fixed design}

 {A \emph{data set} $D=\{(\mathbf x_i,y_i)\}_{i=1}^{|D|}$ on
$\mathbb S^d$ is a set of pairs of points $\Lambda_{D}:=\{\mathbf
x_i\}_{i=1}^{|D|}$ on the sphere and real numbers $y_i$. Elements of
$D$ are called \emph{data}. The points $\mathbf x_i$ of
$\Lambda_{D}$ are called the \emph{sampling points} of $D$. To distinguish quadrature rule with random points which we will investigate later, we say a data set $D$ has \emph{deterministic sampling} for (fixed) sampling points.
In this section, we introduce a new filtered hyperinterpolation
for which distributed learning can be used. The data $y_i$ are
the values of a function $f^*$ on $\mathbb S^d$ plus noise, that
is,
\begin{equation}\label{eq:noisydats}
        y_i=f^*(\mathbf x_i)+\epsilon_i, \quad \mathbf E[\epsilon_i]=0,\quad |\epsilon_i|\leq M \quad\forall
        i=1,\dots,|D|.
\end{equation}
The $D$ satisfying \eqref{eq:noisydats} is then called
\emph{noisy data set associated with $f^*$}.

\subsection{Filtered hyperinterpolation for noisy data: deterministic sampling}
We first study the performance of the filtered
hyperinterpolation for a noisy data set $D$ whose data are stored on
a ``big enough'' machine.

\begin{definition}\label{defn:fihdetsamp} For $s\in\mathbb N_0$ and $\{\mathbf x_i\}_{i=1}^{|D|}$, let $\mathcal
Q_{|D|}$ be the quadrature rule given by Lemma~\ref{Lemma:fixed cubature}, which is exact for polynomials of degree up to $s$ and has positive weights $\{w_{i,s,D}\}_{i=1}^{|D|}$ satisfying $0\leq w_{i,s,D}\leq c_2|D|^{-1}$.
    The filtered hyperinterpolation for noisy data associated with a function $f^*$ on $\mathbb S^d$ is 
\begin{equation}\label{SFH for noisy data1}
     f^\diamond_{D,n}(x):= \sum_{i=1}^{|D|}w_{i,s,D}\:y_i K_n(\mathbf x_i\cdot\mathbf x),
\end{equation}
where $K_n$ is a filtered kernel in \eqref{filtered kernel} for
$\eta\in C^\kappa(\mathbb R_+)$ with
$\kappa\geq\left\lfloor\frac{d+3}2\right\rfloor$ and $n\leq s$.
\end{definition}

The kernel $K_n$ provides a smoothing method for the function $f^*$ using data $D$. As we shall see below, the approximation error of this filtered hyperterpolation has the convergence rate depending on the smoothness of function $f^*$.

If $\Lambda_{D}$ is $\tau$-quasi uniform, it then follows from Lemma
\ref{Lemma:fixed cubature} that $s=c_3N^{1/d}$.  We do not
assume the magnitude of the noise extremely small, the filter
$\eta$ of the filtered kernel $K_n$ shall then be chosen properly to minimize the impact of noise on interpolation. This is a problem similar to ``model selection'' in the statistical and machine
learning \cite{CuZh2007}. To say it precisely, if the support $n$ is
too large, then the filtered hyperinterpolation $f^\diamond_{D,n}$
will have precise approximation at the data set $\{(\mathbf
x_i,y_i)\}_{i=1}^{D}$, but $f^\diamond_{D,n}$ may not be a good
approximation of $f^*$ due to the noise. If the support is too small, the performance of the filtered hyperinterpolation $f^\diamond_{D,n}(\mathbf x_i)$ is not good, even at the interpolation points. It is then preferable to set
$n$ as a parameter in the training process. For $\Lambda_{D}$, the quadrature rule in Definition~\ref{defn:fihdetsamp}
(which is from Lemma~\ref{Lemma:fixed cubature}) is valid for
$n$ sufficiently large. The parameter selection thus needs
only a few steps of computation, while the filtered hyperterpolation in Definition~\ref{defn:fihdetsamp} allows us to handle massive noisy spherical data. This property of filtered hyperinterpolation is different from other algorithms such as the regularized least squares \cite{HeSlWo2017}: the latter needs to compute the inverse of the kernel matrix for each regularization parameter.

The following theorem shows that the filtered hyperinterpolation
$f^\diamond_{D,n}$ can approximate $f^*$ well, provided that the
support of the filtered kernel is appropriately tuned and the
sampling point set $\Lambda_{D}$ is $\tau$-quasi uniform for $\tau\geq 2$.

\begin{theorem}\label{Theorem:learning rate fixed design}
Let $d\geq 2$ and $r>d/2$. Assume the sampling point set $\Lambda_{D}$ of the data set $D$ is $\tau$-quasi uniform for $\tau\geq 2$, and $\frac{c_3}{6}|D|^{1/(2r+d)}\leq n\leq \frac{c_3}3 |D|^{1/(2r+d)}$ for constant $c_3$ in Lemma~\ref{Lemma:fixed cubature}. Then, the filtered
hyperinterpolation $f_{D,n}^{\diamond}$ for noisy data set $D$ with
target function $f^*\in \mathbb W_2^r(\mathbb S^d)$ satisfies
\begin{equation}\label{error 1}
     \mathbf E\left\{\|f^\diamond_{D,n}-f^*\|_{L_2(\mathbb S^d)}^2\right\}
     \leq C_1 |D|^{-2r/(2r+d)},
\end{equation}
where $C_1$ is a constant independent of $|D|$ and $n$.
\end{theorem}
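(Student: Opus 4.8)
The plan is to prove \eqref{error 1} by a bias--variance decomposition centred at the noise-free filtered hyperinterpolation. Since $r>d/2$, the Sobolev embedding makes $f^*$ continuous, so the point values $f^*(\mathbf x_i)$ are well defined; set
\begin{equation*}
    g(\mathbf x):=\sum_{i=1}^{|D|} w_{i,s,D}\,\epsilon_i\, K_n(\mathbf x_i\cdot\mathbf x),
    \qquad
    V_{n,|D|}(f^*;\mathbf x):=\sum_{i=1}^{|D|} w_{i,s,D}\, f^*(\mathbf x_i)\, K_n(\mathbf x_i\cdot\mathbf x),
\end{equation*}
the latter being exactly the filtered hyperinterpolation \eqref{filtered hi} applied to $f^*$ with the quadrature rule of Definition~\ref{defn:fihdetsamp}. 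By \eqref{eq:noisydats} and \eqref{SFH for noisy data1}, $f^\diamond_{D,n}-f^*=g+\bigl(V_{n,|D|}(f^*)-f^*\bigr)$, where $g$ is random through the noise and the remaining term is deterministic. Using $\|a+b\|_{L_2(\mathbb S^d)}^2\le 2\|a\|_{L_2(\mathbb S^d)}^2+2\|b\|_{L_2(\mathbb S^d)}^2$ and taking expectations, it suffices to bound $\mathbf E\{\|g\|_{L_2(\mathbb S^d)}^2\}$ (the variance) and $\|V_{n,|D|}(f^*)-f^*\|_{L_2(\mathbb S^d)}^2$ (the bias) separately.

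For the bias I would invoke Lemma~\ref{Lemma:Filtered h app}, after checking its hypotheses. The filter smoothness $\kappa\ge\lfloor(d+3)/2\rfloor$ is imposed in Definition~\ref{defn:fihdetsamp}, and since $\Lambda_{D}$ is $\tau$-quasi uniform, Lemma~\ref{Lemma:fixed cubature} gives exactness up to degree $s=c_3|D|^{1/d}$. Because $r>0$ forces $1/(2r+d)<1/d$, the upper bound $n\le \tfrac{c_3}{3}|D|^{1/(2r+d)}$ yields $3n-1<c_3|D|^{1/(2r+d)}\le c_3|D|^{1/d}=s$, so the rule is exact to degree $3n-1$ as required. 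Lemma~\ref{Lemma:Filtered h app} then gives $\|V_{n,|D|}(f^*)-f^*\|_{L_2(\mathbb S^d)}\le c_5\, n^{-r}\|f^*\|_{\mathbb W_2^r(\mathbb S^d)}$, and the lower bound $n\ge\tfrac{c_3}{6}|D|^{1/(2r+d)}$ turns $n^{-2r}$ into a constant times $|D|^{-2r/(2r+d)}$.

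The heart of the proof is the variance. Expanding the square, integrating over $\mathbb S^d$, and using that the $\epsilon_i$ are independent with $\mathbf E[\epsilon_i]=0$ and $\mathbf E[\epsilon_i^2]\le M^2$, the cross terms vanish and
\begin{equation*}
    \mathbf E\{\|g\|_{L_2(\mathbb S^d)}^2\}
    =\sum_{i=1}^{|D|} w_{i,s,D}^2\,\mathbf E[\epsilon_i^2]\int_{\mathbb S^d}K_n(\mathbf x_i\cdot\mathbf x)^2\,\mathrm d\omega(\mathbf x).
\end{equation*}
Writing $K_n$ through the addition theorem as $K_n(\mathbf x_i\cdot\mathbf x)=\sum_\ell\eta(\ell/n)\sum_m Y_{\ell,m}(\mathbf x_i)Y_{\ell,m}(\mathbf x)$ and applying Parseval, the integral equals $\sum_\ell\eta(\ell/n)^2\frac{Z_{d,\ell}}{|\mathbb S^d|}P_\ell^{(d+1)}(1)$; since $\supp\eta\subseteq[1/2,2]$, $0\le\eta\le1$, $P_\ell^{(d+1)}(1)=1$ and $Z_{d,\ell}\asymp\ell^{d-1}$, this is at most $Cn^d$ independently of $i$. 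The positive weights obey $\sum_i w_{i,s,D}^2\le(\max_i w_{i,s,D})\sum_i w_{i,s,D}\le c_2|D|^{-1}|\mathbb S^d|$, where $\sum_i w_{i,s,D}=|\mathbb S^d|$ follows from applying the degree-$s$ exactness to the constant function $1$. Hence $\mathbf E\{\|g\|_{L_2(\mathbb S^d)}^2\}\le CM^2|D|^{-1}n^d$, and with $n\asymp|D|^{1/(2r+d)}$ this is again a constant times $|D|^{-1+d/(2r+d)}=|D|^{-2r/(2r+d)}$. Adding the bias and variance bounds gives \eqref{error 1}. I expect the variance estimate to be the main obstacle: one must extract the exact cancellation produced by the independent zero-mean noise and then combine the sharp kernel energy $\int_{\mathbb S^d}K_n^2\asymp n^d$ with the weight bound $\sum_i w_{i,s,D}^2\le c_2|D|^{-1}|\mathbb S^d|$, because it is precisely the competition between the $|D|^{-1}n^d$ variance and the $n^{-2r}$ bias that dictates the exponent $1/(2r+d)$ and the admissible range of $n$.
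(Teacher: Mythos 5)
Your proposal is correct and takes essentially the same route as the paper's own proof: the same decomposition of $f^\diamond_{D,n}-f^*$ into the noise-free filtered hyperinterpolation bias, bounded via Lemma~\ref{Lemma:Filtered h app} after verifying degree-$(3n-1)$ exactness from Lemma~\ref{Lemma:fixed cubature}, plus a variance term controlled by independence of the zero-mean bounded noise together with $\int_{\mathbb S^d}K_n(\mathbf x_i\cdot\mathbf x)^2\,\mathrm{d}\omega(\mathbf x)\leq C n^d$ and $\sum_{i=1}^{|D|}w_{i,s,D}^2\leq C|D|^{-1}$, with $n\asymp|D|^{1/(2r+d)}$ balancing the two. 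The only cosmetic differences are that the paper uses $\mathbf E\{f^\diamond_{D,n}\}=f^{\diamond,*}_{D,n}$ to make the decomposition exact (avoiding your factor-$2$ inequality, which is equally valid) and invokes the localization-based Lemma~\ref{Lemma:bound for filtered norm} for the kernel energy, whereas you obtain the same $n^d$ bound by the addition theorem and Parseval, which is exact for $p=2$ and needs only $\eta^2\leq 1$ (a consequence of \eqref{Filter}).
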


 {\begin{remark}
    Here the condition $r>d/2$ is the embedding
     condition such that any function in $\mathbb W_2^r(\mathbb S^d)$ has
      a representation of a continuous function on $\mathbb S^d$.
  The numerical computation of the filtered hyperinterpolation then makes sense.
\end{remark}}

{We give the proof of Theorem~\ref{Theorem:learning rate fixed
design} in Section~\ref{Sec.Proof}.} As mentioned above, since
$\Lambda_{D}$ is $\tau$-quasi uniform, the quadrature rule for the
filtered hyperinterpolation in Definition~\ref{defn:fihdetsamp} is
exact for $\Pi_s^d$ with $s=c|D|^{1/d}$. As we choose $n\leq c
|D|^{1/(2r+d)}\leq s$ in Theorem~\ref{Theorem:learning rate fixed
design}, $f^\diamond_{D,n}$ reproduces polynomials in $\Pi_s^d$.
Theorem~\ref{Theorem:learning rate fixed design} illustrates that if
the scattered data $\Lambda_{D}$ has good geometric property, for
example, $\tau$-quasi uniformity, and the support of the filter $\eta$ is appropriately chosen, then the spherical filtered hyperinterpolation for noisy data set
$D$ can approximate a sufficiently smooth target function on the sphere in high precision in probablistic sense. By
\cite{GyKoKrWa2002}, the rate $|D|^{-2r/(2r+d)}$ in \eqref{error 1}
cannot be essentially improved in the scenario of
\eqref{eq:noisydats}. Thus, Theorem~\ref{Theorem:learning rate fixed
design} provides a feasibility analysis of the spherical filtered
hyperinterpolation for spherical data with random noise.

\subsection{Distributed filtered hyperinterpolation: deterministic sampling}
We say a large data set $D$ is \emph{distributively stored} on $m$
local machines if for $j=1,\dots,m$, $m\geq2$, the $j$th machine contains a
subset $D_j$ of $D$, and there is no common data between any pair of
machines, that is, $D_j\cap D_{j'}=\emptyset$ for $j\neq j'$, and
$D=\cup_{j=1}^m D_j$. The data sets $D_1,\dots,D_m$ are called
\emph{distributed data sets} of $D$. In this case, the filtered
hyperinterpolation $f^\diamond_{D,n}$ which needs access to the
entire data set $D$ is infeasible. Instead, in this section, we
construct a \emph{distributed filtered hyperinterpolation for the
distributed data sets $\{D_j\}_{j=1}^{m}$ of $D$} by the divide and
conquer strategy \cite{LiGuZh2017}.

\begin{definition}\label{defn:distrfih}
The distributed filtered hyperinterpolation $\overline{f^\diamond_{D,n}}$ for distributed data sets
$\{D_j\}_{j=1}^{m}$ of a noisy data set $D$ associated with function
$f^*$ on $\mathbb S^d$ is a synthesized estimator of local
estimators $f^\diamond_{D_j,n}$, $j=1,2,\dots,m$, each of which is
the spherical filtered hyperinterpolation \eqref{SFH for noisy
data1} for noisy data set $D_j$:
\begin{equation}\label{distrilearn 1}
   \overline{f^\diamond}(\{D_j\}_{j=1}^{m},n;\mathbf x) :=
   \overline{f^\diamond_{D,n}}(\mathbf x) := \sum_{j=1}^m \frac{|D_j|}{|D|}
   f^\diamond_{D_j,n}(\mathbf x),\quad \bx\in\mathbb S^{d}.
\end{equation}
\end{definition}
The synthesis here is a process when the local estimators
communicate to a central processor to produce the global estimator
$\overline{f^\diamond_{D,n}}$.

 {The following theorem shows that the distributed filtered
hyperinterpolation} $\overline{f^\diamond_{D,n}}$ has similar
approximation performance as $f^\diamond_{D,n}$  {if the number of
local machines is not large}.

\begin{theorem}\label{Theorem:distributed learning rate fixed design}
Let $d\geq 2$, $r>d/2$, $m\geq2$ and $D$ a noisy data set
satisfying \eqref{eq:noisydats}. Let $\{D_j\}_{j=1}^{m}$ be $m$
distributed data sets of $D$. For $j=1,\dots,m$, the sampling
point set $\Lambda_{D_j}$ of $D$ is $\tau$-quasi uniform for
$\tau\geq 2$.  {If the distributed filtered hyperinterpolation
$\overline{f^\diamond_{D,n}}$ for $\{D_j\}_{j=1}^{m}$ satisfies that
the target function $f^*$ is in $\mathbb W_2^r(\mathbb S^d)$, $\frac{c_3}{6}|D|^{1/(2r+d)}\leq n\leq \frac{c_3}3 |D|^{1/(2r+d)}$ for constant $c_3$ in Lemma~\ref{Lemma:fixed cubature}, and
$\min_{j=1,\dots,m}|D_j|\geq |D|^{\frac{d}{2r+d}}$, then,}
\begin{equation}\label{error 1.1}
            \mathbf E\left\{\|\overline{f^\diamond_{D,n}}-f^*\|_{L_2(\mathbb S^d)}^2\right\}\leq
            C_2 |D|^{-2r/(2r+d)},
\end{equation}
where $C_2$ is a constant independent of $|D|$, $|D_1|,\dots,|D_m|$
and $n$.
\end{theorem}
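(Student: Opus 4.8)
The plan is to perform a bias--variance decomposition of the aggregated estimator, exploiting both the linearity of each local filtered hyperinterpolation in the data $y_i$ and the independence of the noise across the disjoint machines. Writing $y_i=f^*(\mathbf x_i)+\epsilon_i$ and abbreviating the local weights as $w_{i,s,D_j}$, set
\begin{equation*}
  V_{n,D_j}(f^*;\mathbf x):=\sum_{i\in D_j}w_{i,s,D_j}\,f^*(\mathbf x_i)\,K_n(\mathbf x_i\cdot\mathbf x),
  \qquad
  \xi_j(\mathbf x):=\sum_{i\in D_j}w_{i,s,D_j}\,\epsilon_i\,K_n(\mathbf x_i\cdot\mathbf x),
\end{equation*}
so that $f^\diamond_{D_j,n}=V_{n,D_j}(f^*)+\xi_j$, where $\xi_j$ has mean zero by \eqref{eq:noisydats}. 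Since $\sum_{j=1}^m |D_j|/|D|=1$, the expectation of $\overline{f^\diamond_{D,n}}$ equals $\sum_{j=1}^m\frac{|D_j|}{|D|}V_{n,D_j}(f^*)$, and the target quantity splits as
\begin{equation*}
  \mathbf E\left\{\|\overline{f^\diamond_{D,n}}-f^*\|_{L_2(\mathbb S^d)}^2\right\}
  =\Bigl\|\sum_{j=1}^m\tfrac{|D_j|}{|D|}\bigl(V_{n,D_j}(f^*)-f^*\bigr)\Bigr\|_{L_2(\mathbb S^d)}^2
  +\mathbf E\Bigl\{\bigl\|\sum_{j=1}^m\tfrac{|D_j|}{|D|}\xi_j\bigr\|_{L_2(\mathbb S^d)}^2\Bigr\}.
\end{equation*}

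For the bias, I would apply Jensen's inequality with the weights $|D_j|/|D|$ to move the squared norm inside the sum, bounding the first term by $\sum_{j=1}^m\frac{|D_j|}{|D|}\|V_{n,D_j}(f^*)-f^*\|_{L_2(\mathbb S^d)}^2$. Each $V_{n,D_j}(f^*)$ is the \emph{noise-free} filtered hyperinterpolation of $f^*$ on $D_j$, so Lemma~\ref{Lemma:Filtered h app} applies as soon as the quadrature rule on $D_j$ is exact to degree $3n-1$. This is precisely where the hypothesis $\min_{j}|D_j|\geq |D|^{d/(2r+d)}$ is used: by Lemma~\ref{Lemma:fixed cubature} the $\tau$-quasi-uniform set $\Lambda_{D_j}$ admits a rule exact to degree $c_3|D_j|^{1/d}\geq c_3|D|^{1/(2r+d)}\geq 3n$, so the hypothesis of Lemma~\ref{Lemma:Filtered h app} holds on every machine. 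Each summand is then at most $c_5^2\,n^{-2r}\|f^*\|_{\mathbb W_2^r(\mathbb S^d)}^2$, and since the weights sum to one the whole bias is bounded by $c_5^2\,n^{-2r}\|f^*\|_{\mathbb W_2^r(\mathbb S^d)}^2$, which is of order $|D|^{-2r/(2r+d)}$ because $n\asymp|D|^{1/(2r+d)}$.

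For the variance, the crucial observation is that the pure-noise parts $\xi_1,\dots,\xi_m$ are built from \emph{disjoint}, hence independent, families of mean-zero noise variables, so every cross term in $\mathbf E\{\|\sum_j\frac{|D_j|}{|D|}\xi_j\|^2\}$ vanishes and the second term collapses to $\sum_{j=1}^m\frac{|D_j|^2}{|D|^2}\,\mathbf E\{\|\xi_j\|_{L_2(\mathbb S^d)}^2\}$. To estimate a single $\mathbf E\{\|\xi_j\|^2\}$ I would expand the $L_2$ inner product, use $\mathbf E[\epsilon_i\epsilon_{i'}]=0$ for $i\neq i'$ and $\mathbf E[\epsilon_i^2]\leq M^2$, and compute, via the addition theorem and the orthonormality of $\{Y_{\ell,m}\}$,
\begin{equation*}
  \int_{\mathbb S^d}K_n(\mathbf x_i\cdot\mathbf x)^2\,\mathrm d\omega(\mathbf x)
  =\sum_{\ell=0}^\infty \eta\!\left(\tfrac{\ell}{n}\right)^{\!2}\frac{Z_{d,\ell}}{|\mathbb S^d|}
  \leq C\,n^d,
\end{equation*}
the bound following since $\eta$ is supported in $[1/2,2]$, is bounded by $1$, and $Z_{d,\ell}\asymp \ell^{d-1}$. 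Combined with $w_{i,s,D_j}\leq c_2|D_j|^{-1}$ this gives $\mathbf E\{\|\xi_j\|_{L_2(\mathbb S^d)}^2\}\leq M^2 n^d\sum_{i\in D_j}w_{i,s,D_j}^2\leq C\,M^2 n^d/|D_j|$. Substituting, the total variance is at most $\frac{C M^2 n^d}{|D|^2}\sum_{j=1}^m|D_j|=\frac{C M^2 n^d}{|D|}$, which is of order $|D|^{d/(2r+d)-1}=|D|^{-2r/(2r+d)}$, matching the bias and yielding \eqref{error 1.1}.

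The conceptual heart, and the step I expect to be most delicate, is the variance analysis: independence is exactly what kills the cross terms and makes the aggregated variance equal to $n^d/|D|$, i.e.\ the same order one would get from the centralized estimator using all $|D|$ data, with no penalty growing in the number of servers $m$. One cannot simply invoke Theorem~\ref{Theorem:learning rate fixed design} machine by machine, since the support parameter $n$ is tuned to the global sample size $|D|$ rather than to $|D_j|$; the local approximation power must instead be extracted from Lemma~\ref{Lemma:Filtered h app}, and the lower bound $\min_j|D_j|\geq |D|^{d/(2r+d)}$ is the precise threshold guaranteeing enough quadrature exactness on each machine to keep the bias optimal. This same lower bound is what implicitly limits how many servers may be used.
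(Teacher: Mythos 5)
Your proposal is correct and follows essentially the same route as the paper: the content of Lemma~\ref{Lemma:distributed.1} is precisely your bias--variance bound (independence of the noise across the disjoint machines annihilates the cross terms, and Jensen's inequality handles the aggregated bias), your per-machine estimates coincide with \eqref{bound first term.1} and \eqref{bound second term.1}, and you use the hypothesis $\min_{j}|D_j|\geq |D|^{d/(2r+d)}$ exactly as the paper does, namely to secure via Lemma~\ref{Lemma:fixed cubature} the degree-$(3n-1)$ quadrature exactness on each machine that Lemma~\ref{Lemma:Filtered h app} requires even though $n$ is tuned to $|D|$ rather than $|D_j|$. The only cosmetic deviation is that you bound $\int_{\mathbb S^d}K_n(\mathbf x_i\cdot\mathbf x)^2\,\mathrm{d}\omega(\mathbf x)\leq C n^d$ directly by the addition theorem and Parseval, where the paper invokes the localization-based Lemma~\ref{Lemma:bound for filtered norm}; both yield the same $\mathcal{O}(n^d/|D_j|)$ per-machine variance and hence the same rate \eqref{error 1.1}.
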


{\begin{remark} The condition $\min_{j=1,\dots,m}|D_j|\geq
|D|^{\frac{d}{2r+d}}$ has a close connection to the number $m$ of
local machines. In particular, if $|D_1|=\dots=|D_m|$, since each
$D_j$ is $\tau$-quasi uniform, $\min_{j=1,\dots,m}|D_j|\geq
|D|^{\frac{d}{2r+d}}$ is equivalent to $m\leq |D|^\frac{2r}{2r+d}$.
\end{remark}}

The proof of Theorem~\ref{Theorem:distributed learning rate fixed
design} is postponed in Section~\ref{Sec.Proof}.
Theorem~\ref{Theorem:distributed learning rate fixed design}
illustrates that if $\min_{j=1,\dots,m}|D_j|\geq
|D|^{\frac{d}{2r+d}}$, then with the same assumption as Theorem
\ref{Theorem:learning rate fixed design}, the distributed filtered
hyperinterpolation will have the same approximation performance
as the filtered hyperinterpolation that treats all the distributed data sets as a whole ``big enough'' machine.

% Distributed filtered hyperinterpolation with random sampling
\section{Distributed Filtered Hyperinterpolation with Random
Sampling}\label{Sec.Random}  We say a data set $D$ has \emph{random
sampling} if its sampling points are i.i.d. random points on $\sph{d}$. In this section, we construct a filtered hyperinterpolation for noisy data satisfying \eqref{eq:noisydats} with random sampling points.

\subsection{Filtered hyperinterpolation for noisy data: random sampling}
The filtered hyperinterpolation for noisy data with random
sampling can be constructed as follows.
Let $D=\{(\mathbf x_i,y_i)\}_{i=1}^{|D|}$ and $n\in\mathbb N$. Let
the quadrature rule $\mathcal Q_{|D|}:=\{(a^*_{i,n,D},\mathbf
x_i)\}_{i=1}^{|D|}$ as given by
Theorem~\ref{Theorem:random cubature}, which is exact for polynomials of degree $n$. For $m\geq2$, let
\begin{equation}\label{eq:fihransamp}
    a_{i,n,D}=\left\{\begin{array}{ll} a^*_{i,n,D},& \displaystyle\mbox{if}\
        \sum_{i=1}^{|D|}|a^*_{i,n,D}|^2\leq 2/m,\\[1mm]
        0,&\mbox{otherwise},
        \end{array}\right.\quad\forall i=1,\dots,|D|.
\end{equation}
\begin{definition}\label{defn:fihransamp}
The filtered hyperinterpolation for noisy data $D:=\{(\mathbf
x_i,y_i)\}_{i=1}^{|D|}$ with random sampling points $\{\mathbf
x_i\}_{i=1}^{|D|}$ is
\begin{equation}\label{SFH for noisy data}
     f_{D,n}(\mathbf x):= \sum_{i=1}^{|D|}a_{i,n,D}\:y_i
     K_n(\mathbf x_i\cdot\mathbf x).
\end{equation}
\end{definition}

The following theorem gives the approximation error of the filtered hyperinterpolation in Definition~\ref{defn:fihransamp} for sufficiently smooth functions.
\begin{theorem}\label{Theorem:learning rate fih random samp}
Let $d\geq 2$ and $r>d/2$. Let the noisy data set $D$ with i.i.d. random sampling points on $\mathbb S^d$ and 
distribution $\mu$ satisfying \eqref{condition on
distribution}. For integer $n$ satisfying $\frac{c_3}{6}|D|^{1/(2r+d)}\leq n\leq \frac{c_3}3 |D|^{1/(2r+d)}$ with constant $c_3$ in Lemma~\ref{Lemma:fixed cubature},
 the filtered hyperinterpolation $f_{D,n}$ for noisy data set
$D$ with target function $f^*\in \mathbb W_2^r(\mathbb S^d)$ has
the approximation error
\begin{equation}\label{error 2}
            \mathbf E\left\{\|f_{D,n}-f^*\|_{L_2(\mathbb S^d)}^2\right\}\leq C_3|D|^{-2r/(2r+d)},
\end{equation}
where $C_3$ is a constant independent of $|D|$.
\end{theorem}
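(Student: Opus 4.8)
The plan is to split the mean-square error according to the random event on which Theorem~\ref{Theorem:random cubature} supplies an admissible quadrature rule, and on each piece to separate the contribution of the target $f^*$ (the bias) from that of the noise $\epsilon_i$ (the variance). Fix $n$ in the prescribed range and apply Theorem~\ref{Theorem:random cubature} with degree parameter $3n-1$, so that the resulting rule is exact on $\Pi_{3n-1}^d$ as required by Lemma~\ref{Lemma:Filtered h app}; let $\mathcal A$ be the event that such a rule $\{(a^*_{i,n,D},\mathbf x_i)\}$ exists and satisfies $\sum_i|a^*_{i,n,D}|^2\le 2/|D|$. By that theorem $\mathbb P(\mathcal A^c)\le 4\exp\{-C|D|/n^d\}$, and since the truncation threshold $2/m$ is taken of order $|D|^{-1}$ (that is, $m\asymp|D|$), on $\mathcal A$ the truncation in \eqref{eq:fihransamp} is inactive and $a_{i,n,D}=a^*_{i,n,D}$, while in every case $\sum_i|a_{i,n,D}|^2\le c\,|D|^{-1}$. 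Writing $y_i=f^*(\mathbf x_i)+\epsilon_i$ and using $\mathbf E[\epsilon_i]=0$ together with the independence of the $\epsilon_i$, I would decompose $f_{D,n}=\widetilde f_{D,n}+g_{D,n}$ with
\begin{equation*}
  \widetilde f_{D,n}=\sum_i a_{i,n,D}f^*(\mathbf x_i)K_n(\mathbf x_i\cdot\,),\qquad
  g_{D,n}=\sum_i a_{i,n,D}\epsilon_i K_n(\mathbf x_i\cdot\,),
\end{equation*}
so that $\mathbf E\|f_{D,n}-f^*\|_{L_2(\mathbb S^d)}^2\le 2\,\mathbf E\|\widetilde f_{D,n}-f^*\|_{L_2(\mathbb S^d)}^2+2\,\mathbf E\|g_{D,n}\|_{L_2(\mathbb S^d)}^2$.

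The variance term is the clean one. Since the $\epsilon_i$ are independent, mean zero and bounded by $M$, taking the conditional expectation over the noise makes the cross terms vanish and gives $\mathbf E_\epsilon\|g_{D,n}\|_{L_2(\mathbb S^d)}^2=\sum_i a_{i,n,D}^2\,\mathbf E[\epsilon_i^2]\,\|K_n(\mathbf x_i\cdot\,)\|_{L_2(\mathbb S^d)}^2$. The expansion \eqref{filtered kernel} and the addition theorem give $\|K_n(\mathbf x_i\cdot\,)\|_{L_2(\mathbb S^d)}^2=|\mathbb S^d|^{-1}\sum_\ell\eta(\ell/n)^2 Z_{d,\ell}\asymp n^d$, uniformly in $\mathbf x_i$; with $\sum_i a_{i,n,D}^2\le c\,|D|^{-1}$ this yields $\mathbf E\|g_{D,n}\|_{L_2(\mathbb S^d)}^2\le c\,M^2 n^d/|D|$.

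The bias term on $\mathcal A$ is the crux. There $\widetilde f_{D,n}$ is precisely the filtered hyperinterpolation \eqref{filtered hi} of $f^*$ for a rule exact on $\Pi_{3n-1}^d$, and I would aim to reproduce the estimate of Lemma~\ref{Lemma:Filtered h app}, namely $\|\widetilde f_{D,n}-f^*\|_{L_2(\mathbb S^d)}\le c\,n^{-r}\|f^*\|_{\mathbb W_2^r(\mathbb S^d)}$; here $r>d/2$ guarantees $f^*\in C(\mathbb S^d)$, so the point evaluations are legitimate. The essential difference from Lemma~\ref{Lemma:Filtered h app} is that the rule of Theorem~\ref{Theorem:random cubature} is exact only against the sampling measure $\mu$, whereas $K_n$ reproduces with respect to the Lebesgue measure $\omega$. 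Bridging this gap is exactly where condition \eqref{condition on distribution} must enter: applied to the square of the (polynomial) discrepancy between $\widetilde f_{D,n}$ and the continuous filtered approximation $\int_{\mathbb S^d}f^*(\mathbf y)K_n(\mathbf y\cdot\,)\mathrm{d}\omega(\mathbf y)$, it gives $\|P\|_{L_2(\mathbb S^d)}^2\le c_4\|P\|_{2,\mu}^2$ for $P\in\Pi_{4n-2}^d$, after which the $\mu$-exactness controls the quadrature error and a Jackson-type estimate of best polynomial approximation handles the remaining tail. I expect this measure-transfer step, not the variance estimate, to be the principal obstacle.

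On $\mathcal A^c$ I would use the crude a priori bound $\|f_{D,n}\|_{L_2(\mathbb S^d)}\le\bigl(\max_i\|K_n(\mathbf x_i\cdot\,)\|_{L_2(\mathbb S^d)}\bigr)\bigl(\max_i|y_i|\bigr)\sum_i|a_{i,n,D}|\le c\,n^{d/2}$, which follows from $\sum_i|a_{i,n,D}|\le|D|^{1/2}(\sum_i a_{i,n,D}^2)^{1/2}\le c$ and $|y_i|\le\|f^*\|_{L_\infty}+M$; since $\mathcal A$ depends only on the sampling points, this gives $\mathbf E\bigl[\mathbf 1_{\mathcal A^c}\|f_{D,n}-f^*\|_{L_2(\mathbb S^d)}^2\bigr]\le c\,n^d\,\mathbb P(\mathcal A^c)\le c\,n^d\exp\{-C|D|/n^d\}$. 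Collecting the three contributions gives a bound of the form $c\,n^{-2r}\|f^*\|_{\mathbb W_2^r(\mathbb S^d)}^2+c\,n^d/|D|+c\,n^d e^{-C|D|/n^d}$. Choosing $n\asymp|D|^{1/(2r+d)}$ balances $n^{-2r}\asymp|D|^{-2r/(2r+d)}$ against $n^d/|D|\asymp|D|^{-2r/(2r+d)}$, while $|D|/n^d\asymp|D|^{2r/(2r+d)}$ renders the exponential term negligible, producing the asserted rate $C_3|D|^{-2r/(2r+d)}$.
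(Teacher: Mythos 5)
Your proposal follows the same architecture as the paper's proof: split on the high-probability event supplied by Theorem~\ref{Theorem:random cubature}, decompose the error on the good event into a noise (variance) term and a noise-free (bias) term, bound the variance by $c\,M^2n^d/|D|$ using $\sum_i a_{i,n,D}^2\le 2/|D|$ together with $\|K_n(\mathbf x_i\cdot\,)\|_{L_2(\mathbb S^d)}^2\asymp n^d$ (your addition-theorem computation is an exact version of the paper's Lemma~\ref{Lemma:bound for filtered norm} with $p=2$), bound the bias by $c_5^2n^{-2r}\|f^*\|_{\mathbb W_2^r(\mathbb S^d)}^2$ via Lemma~\ref{Lemma:Filtered h app}, absorb the bad event into an exponentially small remainder, and balance with $n\asymp|D|^{1/(2r+d)}$. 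Two cosmetic differences: the paper conditions on the sampling points $\Lambda_D$ so the cross term vanishes identically in \eqref{decomposition 1} and no factor $2$ is lost, and on $\Omega_D^c$ the truncation \eqref{eq:fihransamp} forces $f_{D,n}\equiv0$, so the paper needs only $\|f^*\|_{L_\infty(\mathbb S^d)}^2\,\mathbf P\{\Omega_D^c\}$ in \eqref{bound without event} rather than your crude $c\,n^{d/2}$ a priori bound; both variants are harmless.

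The substantive divergence is in the bias term, and there your caution exceeds the paper's. What you single out as the principal obstacle --- that Theorem~\ref{Theorem:random cubature} delivers exactness against the sampling measure $\mu$ and (as the paper invokes it, via the event $\Xi_D$) only on $\Pi_n^d$, whereas Lemma~\ref{Lemma:Filtered h app} requires exactness to degree $3n-1$ with respect to the Lebesgue measure $\omega$ --- is passed over in silence in the paper: Lemma~\ref{Lemma:Filtered h app} is cited verbatim at \eqref{Bound a1} with no bridge from $\mu$ to $\omega$ and no adjustment of the exactness degree. Your repair of the degree mismatch (call Theorem~\ref{Theorem:random cubature} at degree $3n-1$, which costs only constants in the exponential confidence since $(3n)^d\asymp n^d$, and the hypothesis $N/n^{2d}>c$ still holds for $n\asymp|D|^{1/(2r+d)}$ because $r>d/2$) is exactly right. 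Your sketched measure-transfer step --- apply \eqref{condition on distribution} to the squared polynomial discrepancy in $\Pi_{4n-2}^d$ and then control the $\mu$-norm via $\mu$-exactness plus a Marcinkiewicz--Zygmund-type bound such as \eqref{MZ}, with a Jackson estimate for the nonpolynomial tail --- remains to be carried out in your write-up, but the paper offers no substitute for it either (its argument is literally valid only when $\mu$ is the normalized Lebesgue measure). So the one incomplete step in your proposal is precisely a step the published proof asserts without justification for general $\mu$; in that respect your version is the more careful of the two.
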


We give the proof of Theorem~\ref{Theorem:learning rate fih random
samp} in Section~\ref{Sec.Proof}.
Theorems~\ref{Theorem:learning rate fixed design} and
\ref{Theorem:learning rate fih random samp} show that the filtered
hypinterpolation approximations with random sampling and
deterministic sampling achieve the same optimal convergence rate.

\subsection{Distributed filtered hyperinterpolation: random sampling}\label{sec:distrfih.ransamp}

The distributed filtered hyperinterpolation with random sampling is
a weighted average of filtered hyperinterpolation approximations
for data on local machines. Here the weight for a local machine is the proportion of the data used by the machine to all data. Let $f_{D_j,n}$ be the filtered hyperinterpolation for data $D_j$. Similar as \eqref{distrilearn 1},
the global estimator $\overline{f}_{D,n}$ is defined as follows.
\begin{definition}\label{defn:distrfih.ransamp} Let $d\geq 2$ and $D$ a noisy data set satisfying \eqref{eq:noisydats}. The sampling points of $D$ are i.i.d random points on $\mathbb S^d$. For $m\geq2$, let
$\{D_j\}_{j=1}^{m}$ be $m$ distributed data sets of $D$, and for
$j=1,\dots,m$, $f_{D_j,n}$ be the filtered hyperinterpolation for
$D_j$ given by Definition~\ref{defn:fihransamp}. The distributed
filtered hyperinterpolation for distributed data sets
$\{D_j\}_{j=1}^{m}$ of $D$ is
    \begin{equation}\label{distrilearn}
   \overline{f}_{D,n} := \sum_{j=1}^m \frac{|D_j|}{|D|} f_{D_j,
n}.
\end{equation}
The $f^*$ in \eqref{eq:noisydats} is called target function
to $\overline{f}_{D,n}$.
\end{definition}

The following theorem gives an upper bound of approximation error
for distributed filtered hyperinterpolation with random sampling.
\begin{theorem}\label{Theorem:distributed learning rate}
 Let $d\geq 2$, $r>d/2$, $m\geq2$ and $D$ a noisy data set
satisfying \eqref{eq:noisydats}. The sampling points are i.i.d
random points on $\mathbb S^d$ with distribution $\mu$ in \eqref{condition on distribution}. If the target function $f^*\in \mathbb W_2^r(\mathbb S^d)$, $\frac{c_3}{6}|D|^{1/(2r+d)}\leq n\leq \frac{c_3}3 |D|^{1/(2r+d)}$ with constant $c_3$ in Lemma~\ref{Lemma:fixed cubature},
and $\min_{j=1,\dots,m}|D_j|\geq |D|^{\frac{d+\nu}{2r+d}}$ for some $\nu$ in $(0,2r)$, then,
\begin{equation}\label{error 2.1}
            \mathbf E\left\{\|\overline{f}_{D,n}-f^*\|_{L_2(\mathbb S^d)}^2\right\}
            \leq
            C_4|D|^{-2r/(2r+d)},
\end{equation}
where $C_4$ is a constant independent of $|D|$, $|D_1|,\dots,|D_m|$
and $n$.
\end{theorem}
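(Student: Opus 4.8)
The plan is to combine the single-machine random-sampling analysis of Theorem~\ref{Theorem:learning rate fih random samp} with a bias--variance decomposition of the distributed estimator, mirroring the deterministic argument behind Theorem~\ref{Theorem:distributed learning rate fixed design} but carrying the randomness of the sampling points and the quadrature truncation \eqref{eq:fihransamp} through the estimates. The starting observation is that each local estimator $f_{D_j,n}$ is, by \eqref{SFH for noisy data}, linear in the data $y_i$, so the global estimator decomposes as $\overline{f}_{D,n} = \sum_{j=1}^m \frac{|D_j|}{|D|}\,\expect{f_{D_j,n}\mid \Lambda_{D_j}} + \sum_{j=1}^m \frac{|D_j|}{|D|}\bigl(f_{D_j,n}-\expect{f_{D_j,n}\mid\Lambda_{D_j}}\bigr)$, where the conditional expectation is taken over the noise $\epsilon_i$ only. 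Writing $f^*_{D_j,n}$ for the noise-free version (replacing $y_i$ by $f^*(\mathbf x_i)$), the term $\expect{f_{D_j,n}\mid\Lambda_{D_j}} = f^*_{D_j,n}$ since $\expect{\epsilon_i}=0$. Thus I would split
\begin{equation*}
  \expect{\norm{\overline{f}_{D,n}-f^*}{L_2(\sph{d})}^2}
  \leq 2\,\expect{\normB{\sum_{j=1}^m \tfrac{|D_j|}{|D|}\bigl(f_{D_j,n}-f^*_{D_j,n}\bigr)}{L_2(\sph{d})}^2}
  + 2\,\expect{\normB{\sum_{j=1}^m \tfrac{|D_j|}{|D|}f^*_{D_j,n}-f^*}{L_2(\sph{d})}^2},
\end{equation*}
naming the first the \emph{sample (noise) error} and the second the \emph{approximation (bias) error}.

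For the sample error, I would exploit that the noises $\epsilon_i$ are independent across \emph{all} data (hence across machines) and mean-zero, so the cross terms between distinct machines vanish in expectation once we condition on the sampling points; the weighted sum collapses to $\sum_{j=1}^m (|D_j|/|D|)^2 \expect{\norm{f_{D_j,n}-f^*_{D_j,n}}{L_2}^2}$. Each summand is controlled using the reproducing/norm properties of the filtered kernel $K_n$ together with the key variance bound $\sum_i |a_{i,n,D_j}|^2 \leq 2/m$ built into the truncation \eqref{eq:fihransamp}: the noise contribution on machine $j$ is of order $M^2 \sum_i a_{i,n,D_j}^2 \,\norm{K_n(\mathbf x_i\cdot)}{L_2}^2 \lesssim M^2 (n^d/m)$, using $\norm{K_n}{L_2}^2 \asymp n^d$. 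This is precisely where the random-sampling variance analysis of Theorem~\ref{Theorem:learning rate fih random samp} is reused per machine, and where the exponential confidence level of Theorem~\ref{Theorem:random cubature} is needed to guarantee that the ``otherwise'' branch of \eqref{eq:fihransamp} (where the weights are zeroed) contributes negligibly: the failure probability $4\exp\{-CN/n^d\}$ must be summed over machines and shown to be dominated under the constraint $\min_j |D_j|\geq |D|^{(d+\nu)/(2r+d)}$, which forces $|D_j|/n^d \gtrsim |D|^{\nu/(2r+d)}\to\infty$, making the exponential tail superpolynomially small.

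For the approximation error I would use the triangle inequality to reduce to bounding each $\norm{f^*_{D_j,n}-f^*}{L_2}$; by Theorem~\ref{Theorem:learning rate fih random samp} applied noise-free (or directly by Lemma~\ref{Lemma:Filtered h app} on the event that the quadrature is exact for $\Pi_n^d$), each such bias term is of order $n^{-r}\norm{f^*}{\mathbb W_2^r(\sph{d})}$, and since $\sum_j |D_j|/|D| = 1$ the convex combination preserves this rate, yielding $O(n^{-2r}) = O(|D|^{-2r/(2r+d)})$ with the stated choice of $n\asymp |D|^{1/(2r+d)}$. Combining the two pieces, the total error is $O(n^d/m) + O(|D|^{-2r/(2r+d)})$ up to the negligible failure-event remainder; the condition $\min_j|D_j|\geq |D|^{(d+\nu)/(2r+d)}$ (equivalently, a cap on $m$) is exactly what makes $n^d/m \lesssim |D|^{d/(2r+d)}/m \lesssim |D|^{-2r/(2r+d)}$, so both terms balance at the optimal rate. \textbf{The main obstacle} I anticipate is the rigorous handling of the random quadrature truncation: because the weights $a_{i,n,D_j}$ are defined conditionally on whether $\sum_i |a^*_{i,n,D_j}|^2 \leq 2/m$ holds, the estimator is not an unbiased linear functional of the noise on the low-probability failure event, so the bias--variance split above is only clean on the good event. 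I would therefore carry the argument on the intersection of the good events over all $m$ machines, bound the complementary event using the union bound against the exponential confidence of Theorem~\ref{Theorem:random cubature}, and absorb the contribution there using the crude a.s. bound $|y_i|\leq \norm{f^*}{L_\infty}+M$ (finite since $r>d/2$ gives the Sobolev embedding into $C(\sph{d})$), showing it is dominated by $|D|^{-2r/(2r+d)}$ thanks to the exponentially small probability against a merely polynomial loss.
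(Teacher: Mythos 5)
Your overall architecture coincides with the paper's: decompose the distributed error per machine into a noise (variance) part and a bias part (the paper uses the exact identity of Lemma~\ref{Lemma:distributed} rather than your factor-$2$ split, which is immaterial for the rate), reuse the single-machine estimate \eqref{IMportant} from Theorem~\ref{Theorem:learning rate fih random samp} with $D=D_j$, and kill the quadrature-failure events by a union bound against the exponential confidence $4\exp\{-C|D_j|/n^d\}$ of Theorem~\ref{Theorem:random cubature} --- which is exactly where the hypothesis $\min_j|D_j|\geq|D|^{(d+\nu)/(2r+d)}$, i.e. $|D_j|/n^d\geq c\,|D|^{\nu/(2r+d)}\to\infty$, enters. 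Your treatment of the bias term (Lemma~\ref{Lemma:Filtered h app} on the event of exact quadrature, exponential tail off it, cf.\ \eqref{eq:f*Dj} and \eqref{bound a}) matches the paper. One small inaccuracy: conditional unbiasedness $\mathbf E\{f_{D_j,n}\mid\Lambda_{D_j}\}=f^*_{D_j,n}$ holds even on the failure event, since the truncated weights depend only on the sampling points, not the noise; the problem there is merely that $f^*_{D_j,n}=0$ approximates $f^*$ poorly, which the exponential tail absorbs as you describe.

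However, your variance accounting contains a genuine quantitative error. You take the threshold in \eqref{eq:fihransamp} literally as $\sum_i|a^*_{i,n,D_j}|^2\leq 2/m$, obtain a per-machine noise contribution of order $M^2n^d/m$, and then assert that $\min_j|D_j|\geq|D|^{(d+\nu)/(2r+d)}$ ``is exactly what makes $n^d/m\leq|D|^{d/(2r+d)}/m\leq|D|^{-2r/(2r+d)}$.'' That inequality is false in the relevant regime: since $n^d\asymp|D|^{d/(2r+d)}$, it would require $m\geq|D|$, whereas the hypothesis caps $m$ from \emph{above} (for equal splits, $m\leq|D|^{(2r-\nu)/(2r+d)}$); you have also dropped the outer squared weights $(|D_j|/|D|)^2$ in the final tally. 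The correct mechanism, and the one the paper uses, is this: the truncation threshold should be read as $2/|D_j|$ on machine $j$ --- this is what Theorem~\ref{Theorem:random cubature} delivers ($\sum_i|w_{i,n}|^2\leq 2/N$ with $N=|D_j|$) and what the paper's own proof uses when defining $\Omega_{D_j}$ by $\sum_i a_{i,n,D_j}^2\leq 2/|D_j|$; the ``$2/m$'' in \eqref{eq:fihransamp} is evidently a typo. With that reading, machine $j$ contributes at most $2c_1^2M^2n^d/|D_j|$, and the squared weights telescope:
\begin{equation*}
\sum_{j=1}^m\frac{|D_j|^2}{|D|^2}\cdot\frac{2c_1^2M^2n^d}{|D_j|}
=\frac{2c_1^2M^2n^d}{|D|}\asymp|D|^{-\frac{2r}{2r+d}},
\end{equation*}
with no condition on $m$ needed for the variance at all. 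The lower bound on $\min_j|D_j|$ is required only to make the exponential terms $n^d\exp\{-C|D_j|/n^d\}$ arising from the failure events (in both the variance and the bias parts) dominated by $|D|^{-2r/(2r+d)}$, as you correctly argued elsewhere. Your proof is repairable by this correction, but as written the balancing step for the noise term fails.
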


The proof of Theorem~\ref{Theorem:distributed learning rate} will be
given in Section~\ref{Sec.Proof}.
From Theorems~\ref{Theorem:distributed learning rate} and
\ref{Theorem:distributed learning rate fixed design}, we see that
the distributed filtered hyperinterpolation approximations with
random sampling and deterministic sampling can both achieve the
convergence rate of order $|D|^{-2r/(2r+d)}$. To achieve this
approximation order, the condition on the number
of local machines of the random  sampling is stronger than the
deterministic case, since the former requires
$\min_{j=1,\dots,m}|D_j|\geq |D|^{(d+\nu)/(2r+d)}$ for $\nu \in(0,2r)$, while the latter only needs $\min_{j=1,\dots,m}|D_j|\geq |D|^{d/(2r+d)}$. 

Here we only consider error estimates with respect to Lebesgue measure. It would be interesting to consider error estimates for distributed learning with respect to other measures as done in \cite{Zhou2018distributed,Zhou2018deep,Zhou2019}.

% Numerical Examples
\section{Numerical Examples}\label{sec:numer.exam}
\begin{figure}[htbp]
  \centering
  \begin{minipage}{\textwidth}
  \centering
  \begin{minipage}{0.48\textwidth}
  \centering
  \includegraphics[width=\textwidth]{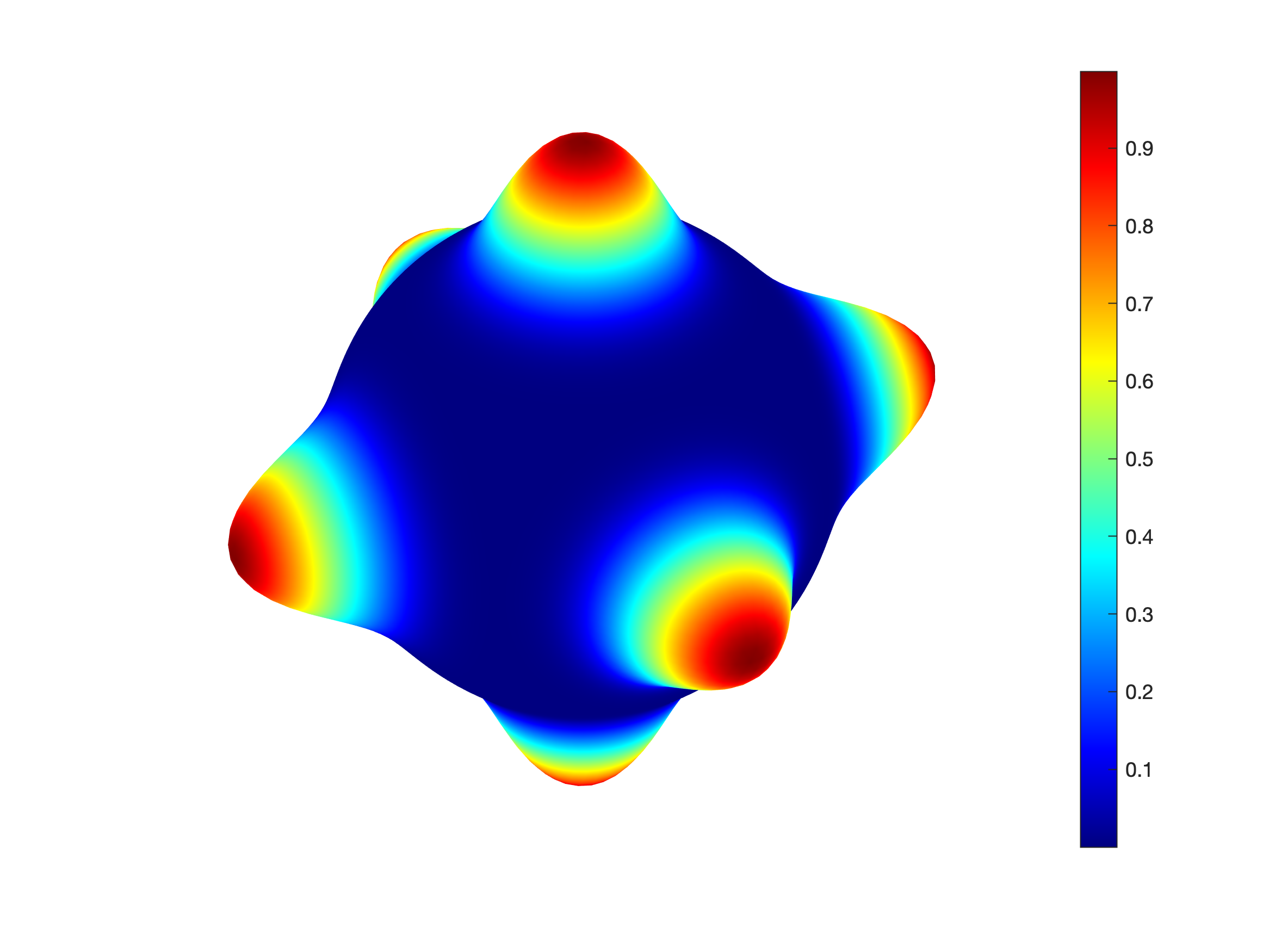}\\[-2mm]
  {\scriptsize (a) $f$}
  \end{minipage}
  \begin{minipage}{0.48\textwidth}
  \centering
  \includegraphics[width=\textwidth]{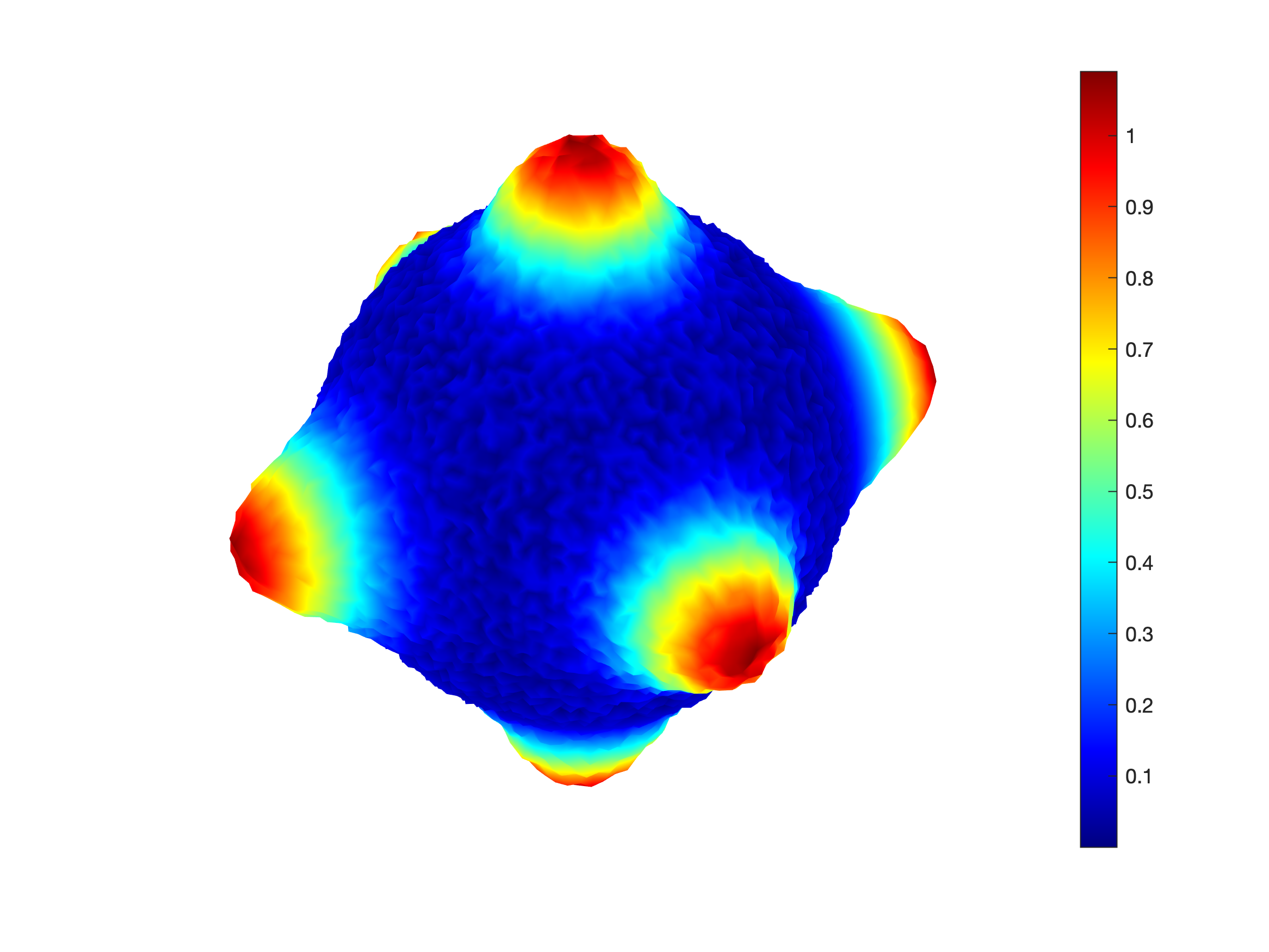}\\[-2mm]
  {\scriptsize (b) $f$ plus noise}
  \end{minipage}
  \end{minipage}\vspace{2mm}
    \begin{minipage}{\textwidth}
  \centering
   \begin{minipage}{0.48\textwidth}
  \centering
  \includegraphics[width=\textwidth]{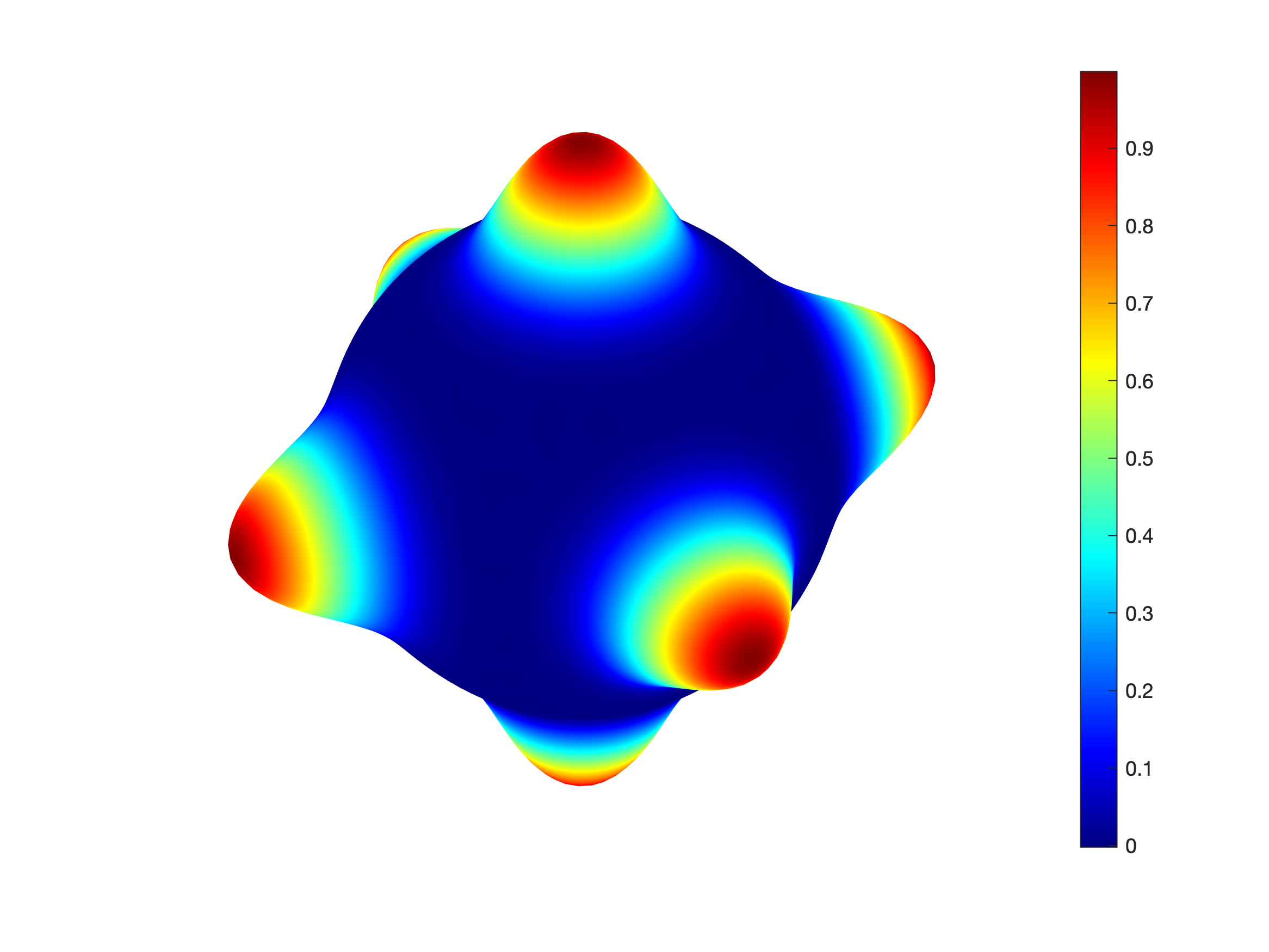}\\[-2mm]
  {\scriptsize (c) $\overline{f^\diamond_{D,n}}$}
  \end{minipage}
  \begin{minipage}{0.48\textwidth}
  \centering
  \includegraphics[width=\textwidth]{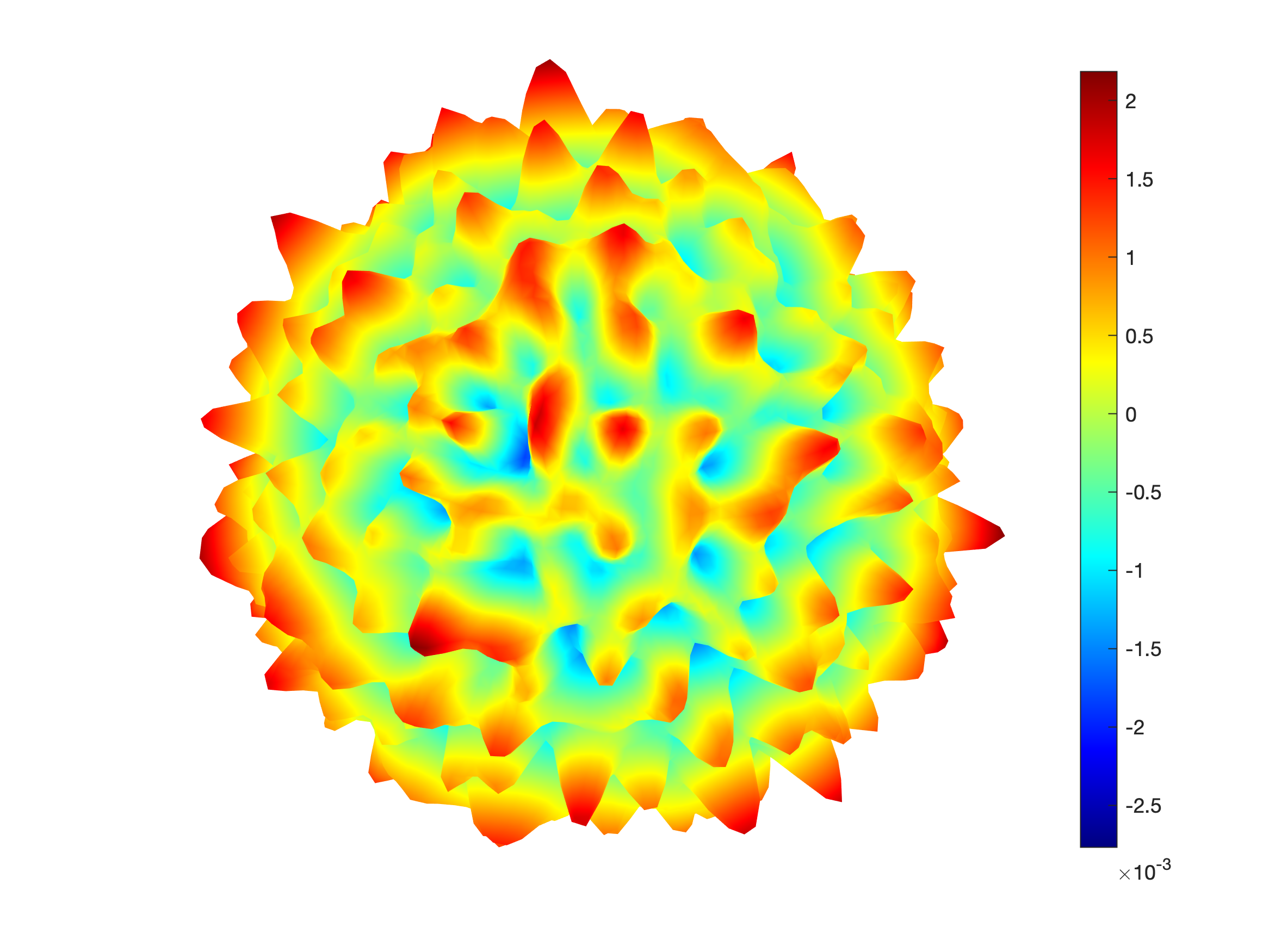}\\[-1mm]
  {\scriptsize (d) Error}
  \end{minipage}
  \end{minipage}
  \caption{(a) Function $f$ in \eqref{eq:rbf} which is in Sobolev space $H^{4.5}(\sph{2})$; (b) Noisy data $f+\epsilon$ with noise level $\sigma=0.1$; (c) Distributed filtered hyperinterpolation $\overline{f^\diamond_{D,n}}$ with $n=25$, $m=100$, $\sigma=0.1$ and the noisy dataset $D$ given on the nodes of symmetric spherical $75$-design; (d) Error $\overline{f^\diamond_{D,n}}-f$.}
  \label{fig:rbf}
\end{figure}

\begin{figure}[ht]
  \centering
  \begin{minipage}{0.7\textwidth}
  \centering
  \includegraphics[width=\textwidth]{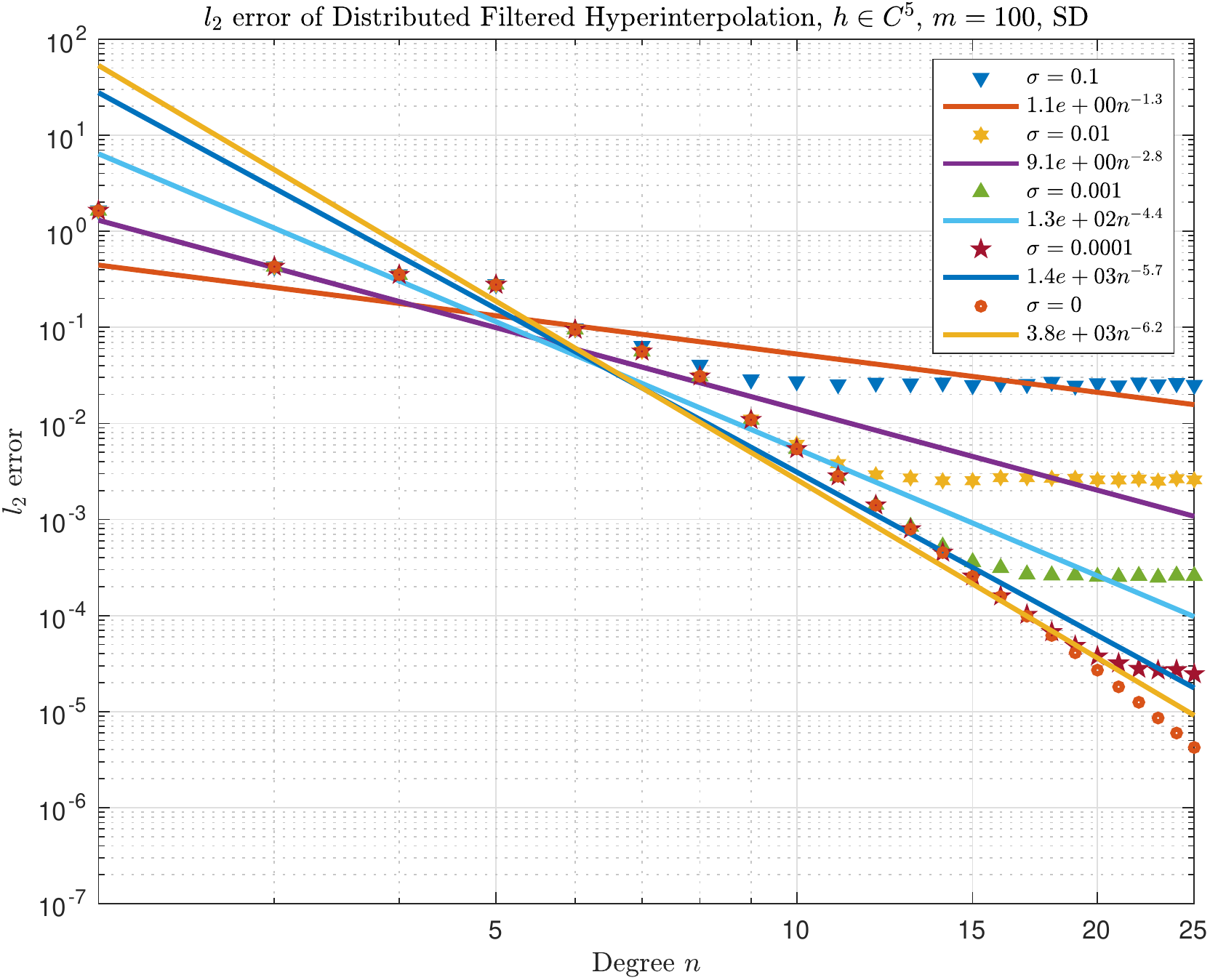}
    \end{minipage}
  \caption{Errors v.s. degree $n$ for distributed filtered hyperinterpolation, $n\leq 25$, $m=100$, $\sigma=0,0.0001,0.001,0.01,0.1$.}
  \label{fig:errvsL}
\end{figure}

\begin{figure}[ht]
  \centering
  \begin{minipage}{0.7\textwidth}
  \centering
  \includegraphics[width=\textwidth]{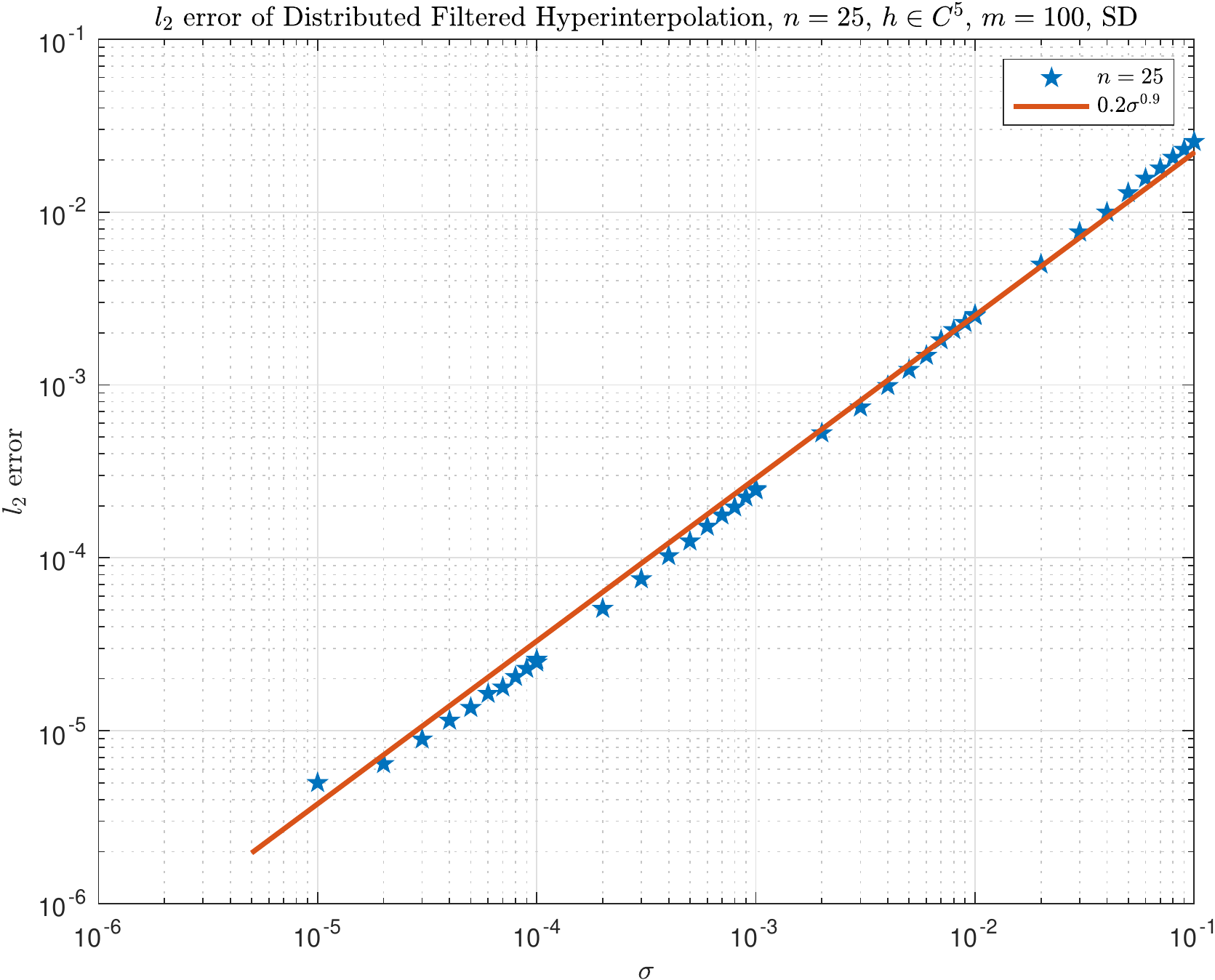}
    \end{minipage}
  \caption{Errors v.s. standard variance $\sigma$ for distributed filtered hyperinterpolation, $n=25$, $m=100$}
  \label{fig:errvssigma}
\end{figure}

\begin{figure}[ht]
  \centering
  \begin{minipage}{0.7\textwidth}
  \centering
  \includegraphics[width=\textwidth]{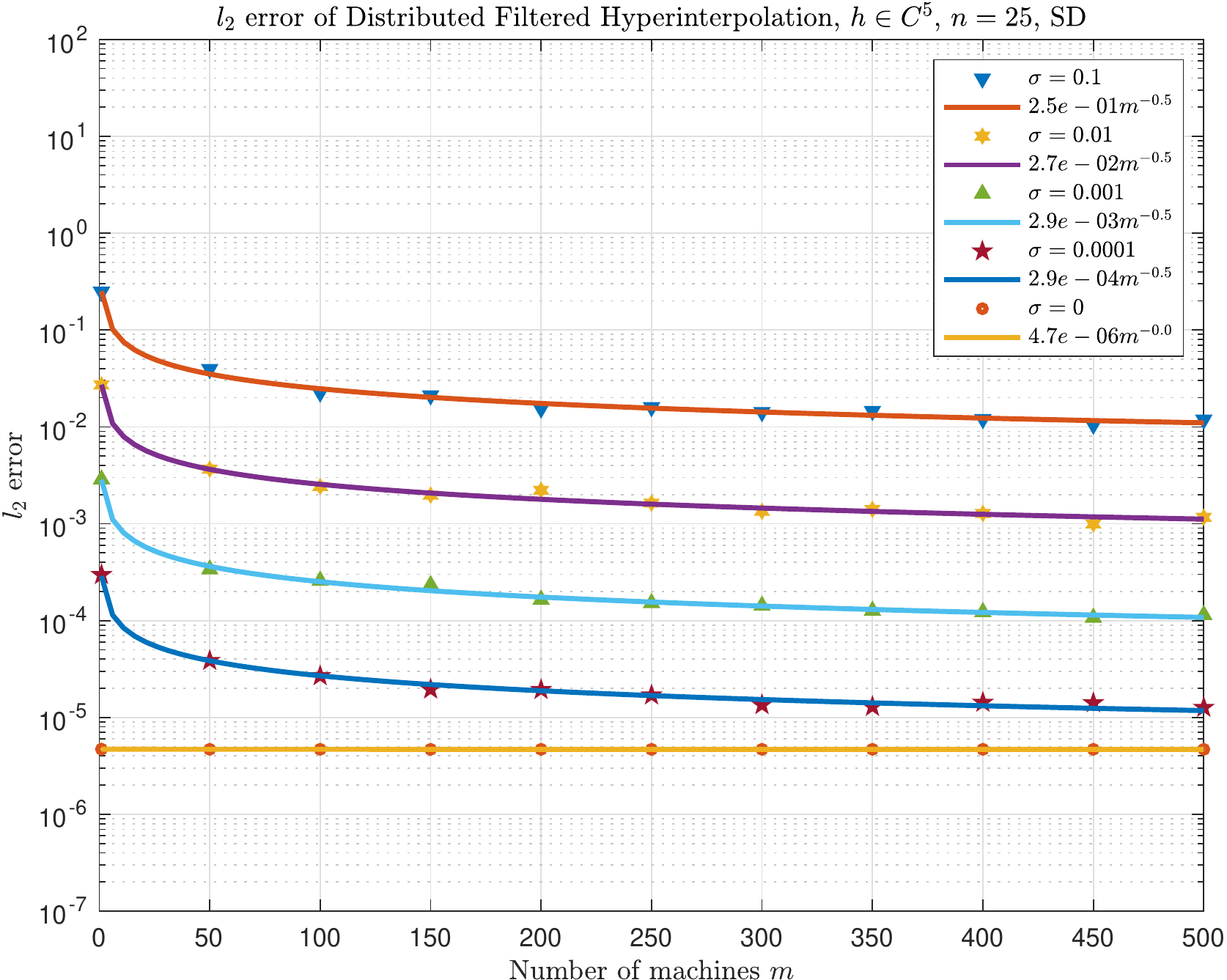}
    \end{minipage}
  \caption{Errors v.s. number of machines $m$ for distributed filtered hyperinterpolation, $n=25$, $m
  \leq 500, \sigma=0,0.0001,0.001,0.01,0.1$}
  \label{fig:errvsm}
\end{figure}
In this section, we test distributed filtered hyperinterpolation on noisy data on $\sph{2}$.
We use Womersley's \emph{symmetric spherical $t$-designs}\footnote{\url{https://web.maths.unsw.edu.au/\%7Ersw/Sphere/EffSphDes/}} \cite{Womersley2018,DeGoSe1977} as quadrature rule for distributed filtered hyperinterpolation. The symmetric spherical $t$-design is an equal-weighted quadrature with $\bigo{}{t^2}$ nodes satisfying \eqref{eq:quadrature} for degree $n\leq t$, where the order $\bigo{}{t^2}$ is optimal as a consequence of \cite{BoRaVi2013}. 

Let $(u)_{+}:=\max\{u,0\}$ for $u\in\Rone$. The normalised Wendland-Wu function is given by
\begin{equation*}
    \fnWend{}(u) := \fWend{}\left(\frac{8u}{15\sqrt{\pi}}\right),\quad u\in \Rone,
\end{equation*}
where $\fWend{}(u)$ is the original Wendland-Wu function
\begin{equation*}
  \fWend{}(u) := 
  (1-u)_{+}^{8}(32u^3 + 25u^2 + 8u + 1).
  \end{equation*}
See \cite{ChSlWo2014,Wendland1995,Wu1995}.
The $\phi(\bx\cdot\bz)\in H^{4.5}(\sph{2})$ is a radial basis function on the sphere $\sph{2}$ with centre at $\bz\in\sph{2}$ \cite{LeSlWe2010,NaWa2002}. We then define 
\begin{equation}\label{eq:rbf}
	f(\bx):=\sum_{i=1}^{6}\phi(\bx\cdot\bz_i)
\end{equation}
which is the linear combination of radial basis functions $\phi(\bx\cdot\bz_i)$ with centres at $\bz_1=(1,0,0)$, $\bz_2=(-1,0,0)$, $\bz_3=(0,1,0)$, $\bz_4=(0,-1,0)$, $\bz_5=(0,0,1)$, $\bz_6=(0,0,-1)$. The smoothness of $f$ is $4.5$, i.e. $f\in H^{4.5}(\sph{2})$. We use the function $f$ plus Gaussian white noise as the noisy data. That is, at each node $\bx_i\in\sph{2}$,
\begin{equation}\label{eq:noisydata}
	y_i = f(\bx_i) + \epsilon,\quad \epsilon\sim N(0,\sigma^2),
\end{equation}
where $\bx_i$ are the nodes of a symmetric spherical $t$-design and $\sigma\geq 0$.

Figures~\ref{fig:rbf}a and b show the pictures of $f$ and a realisation of $f+\epsilon$ with noise level $\sigma=0.1$. Figure~\ref{fig:rbf}c shows the distributed filtered hyperinterpolation approximation $\overline{f^\diamond_{D,n}}$ of degree $n=25$ for noisy data $y_i$. The distributed filtered hyperinterpolation uses $m=100$ machines and $C^5$-filter $\eta$ \cite{Wang2016,WaLeSlWo2017,WaSlWo2018} which satisfies the condition of smoothness of filter in Theorem~\ref{Theorem:distributed learning rate fixed design}. On the $j$-th machine, $j=1,\dots,100$, the filtered hyperinterpolation uses a $3n=75$-design with $2,852$ nodes, where the design is rotated from Womersley's symmetric spherical $75$-design \cite{Womersley2018} by the rotation matrix
\begin{equation*}
	\rho_j:=\begin{pmatrix}
	\cos(\theta_j) & -\sin(\theta_j) & 0\\
	\sin(\theta_j) & \cos(\theta_j) & 0\\
	0 & 0 & 1
\end{pmatrix}
\end{equation*}
with $\theta_j=j\pi/m$. The rotated spherical design satisfies the same polynomial exactness property for numerical integration as the unrotated symmetric spherical design since the rotation of a spherical design is still a spherical design with the same \emph{separation} and \emph{filling radii} \cite{Brauchart_etal2015,Brauchart_etal2018,GrSl2013}. The distributed filtered hyperinterpolation then satisfies the condition of Theorem~\ref{Theorem:distributed learning rate fixed design}, which uses $285,200$ points in total. The function values are evaluated at $10,000$ generalized spiral points on $\sph{2}$, which are equally-distributed points \cite{Bauer2000,RaSaZh1994}.
 Figure~\ref{fig:rbf}d shows the approximation error of $\overline{f^\diamond_{D,n}}$ to $f$, which illustrates that errors are small compared to the magnitude of the function $f$. 

Figure~\ref{fig:errvsL} shows the convergence rate (with respect to $n$) of the approximation error of the distributed filtered hyperinterpolation for $y_i$ with noise standard variance $\sigma=0,0.0001,0.001,0.01,0.1$. It illustrates that the approximation rate increases as the noise level becomes higher (i.e. when $\sigma$ is larger). When $\sigma=0$ and then the data is not contaminated by noise, the approximation error reaches the highest convergence rate at order $n^{-6.7}$. The convergence rate of the distributed filtered hyperinterpolation is higher than the upper bound of Theorem~\ref{Theorem:distributed learning rate fixed design}, since the function $f$ is sufficiently smooth.

Figure~\ref{fig:errvssigma} shows that the approximation error of the distributed filtered hyperinterpolation of degree $n=25$ with $m=100$ machines converges at the rate $0.2\sigma^{0.9}$ with decrease of noise level $\sigma$. The noise level of data impacts the approximation precision of distributed filtered hyperinterpolation.

Figure~\ref{fig:errvsm} illustrates the impact of the number $m$ of machines on approximation capability as we can see for a noise level $\sigma>0$, the approximation error converges at rate around $m^{-0.5}$. This means that the noise level changes the absolute magnitude of approximation error but has little impact on the trend of approximation rate with the increase of number of machines.

\newpage
\section{Proofs}\label{Sec.Proof}
\subsection{Proof for Theorem~\ref{Theorem:random cubature}}

{Theorem~\ref{Theorem:random cubature} uses the norming set method
\cite{MhNaWa2001} to derive the probabilistic quadrature rule.} To
prove Theorem~\ref{Theorem:random cubature},  {we need the following
four lemmas. The first one is the  Nikolski\^{\i} inequality on the
sphere, which was proved in \cite{MhNaWa1999}.

\begin{lemma}\label{Lemma:Nikolskii}
Let $1\leq p<q\leq\infty,$ $n\geq1$ be an integer, and
$P\in\Pi_n^{d}$. Then
$$
           \|P\|_{L^q(\mathbb
          S^{d})}\leq
          \tilde{C}_1n^{\frac{d}{p}-\frac{d}{q}}\|P\|_{L^p(\mathbb
          S^{d})},
$$
where the constant $\tilde{C}_1>0$ depends only on $d,p,q$.
\end{lemma}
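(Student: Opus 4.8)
The plan is to represent every $P\in\Pi_n^d$ as a spherical convolution against a fixed reproducing kernel and then invoke Young's inequality, which collapses the whole statement into a single $L_s$ bound on that kernel. For a zonal kernel $G$, set
\[
    (P*G)(\mathbf x):=\int_{\mathbb S^d}P(\mathbf y)\,G(\mathbf x\cdot\mathbf y)\,\mathrm d\omega(\mathbf y),
\]
so that Young's inequality on $\mathbb S^d$ reads $\|P*G\|_{L_q(\mathbb S^d)}\leq\|P\|_{L_p(\mathbb S^d)}\,\|G\|_{L_s(\mathbb S^d)}$ whenever $1+\tfrac1q=\tfrac1p+\tfrac1s$. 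First I would fix a smooth low-pass cutoff $\zeta\in C^{\kappa}(\mathbb R_+)$ with $\zeta\equiv1$ on $[0,1]$ and $\supp\zeta\subseteq[0,2]$, and form the de la Vallée Poussin--type kernel
\[
    G_n(\mathbf x\cdot\mathbf y):=\sum_{\ell=0}^{\infty}\zeta\!\left(\frac{\ell}{n}\right)\frac{Z_{d,\ell}}{|\mathbb S^d|}\,P_{\ell}^{(d+1)}(\mathbf x\cdot\mathbf y).
\]
By the addition theorem, convolution with $G_n$ multiplies a degree-$\ell$ spherical harmonic by $\zeta(\ell/n)$, which equals $1$ for $\ell\leq n$; hence $P*G_n=P$ for all $P\in\Pi_n^d$. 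Choosing the exponent by $\tfrac1s=1+\tfrac1q-\tfrac1p$, so that $1-\tfrac1s=\tfrac1p-\tfrac1q$, Young's inequality reduces the lemma to the single kernel estimate $\|G_n\|_{L_s(\mathbb S^d)}\leq\tilde C_1\,n^{d(1-1/s)}=\tilde C_1\,n^{d/p-d/q}$.

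The crux of the argument — and the step I expect to be the main obstacle — is a \emph{localisation estimate} for this kernel: taking $\zeta\in C^{\kappa}$ with $\kappa>d$, one must prove
\[
    \bigl|G_n(\mathbf x\cdot\mathbf y)\bigr|\leq\frac{c_\kappa\,n^d}{\bigl(1+n\,\mathrm{dist}(\mathbf x,\mathbf y)\bigr)^{\kappa}},\qquad \mathbf x,\mathbf y\in\mathbb S^d.
\]
The standard route is repeated summation by parts applied to the Gegenbauer expansion of $G_n$, combined with the classical asymptotics of $P_\ell^{(d+1)}$; the smoothness of $\zeta$ is exactly what forces the summed series to decay in the geodesic distance. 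This is the technical heart, since a crude bound would give no decay at all and destroy the required exponent.

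Granting the localisation estimate, both endpoint bounds follow readily. At coincident arguments $\|G_n\|_{L_\infty(\mathbb S^d)}\asymp G_n(1)=\sum_{\ell}\zeta(\ell/n)\,Z_{d,\ell}/|\mathbb S^d|\asymp\dim\Pi_{2n}^d\asymp n^d$, while integrating the localisation bound in geodesic polar coordinates about a fixed point and substituting $u=n\theta$ gives
\[
    \|G_n\|_{L_1(\mathbb S^d)}\leq c\int_0^\pi\frac{n^d}{(1+n\theta)^{\kappa}}(\sin\theta)^{d-1}\,\mathrm d\theta\leq c\int_0^{\infty}\frac{u^{d-1}}{(1+u)^{\kappa}}\,\mathrm du\leq c',
\]
where the prefactor $n^d$ is absorbed by the volume $\asymp n^{-d}$ of the shrinking caps of radius $\asymp 1/n$, convergence holding precisely because $\kappa>d$. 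By log-convexity of the $L_s$ norms (Hölder interpolation),
\[
    \|G_n\|_{L_s(\mathbb S^d)}\leq\|G_n\|_{L_1(\mathbb S^d)}^{1/s}\,\|G_n\|_{L_\infty(\mathbb S^d)}^{\,1-1/s}\leq c\,n^{d(1-1/s)},
\]
which is exactly the bound isolated in the first paragraph. Fixing $\zeta$ and $\kappa=d+1$ once $d$ is given, Young's inequality then delivers the asserted Nikolski\u\i{} inequality with $\tilde C_1$ depending only on $d$, $p$ and $q$.
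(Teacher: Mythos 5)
Your proof is correct, but it is worth noting that the paper does not actually prove this lemma: it is quoted verbatim from \cite{MhNaWa1999}, so you have supplied a self-contained argument where the paper only cites. Your route is the standard one from that literature: reproduce $\Pi_n^d$ by convolution with a low-pass de la Vall\'ee Poussin kernel, reduce the inequality via Young's inequality with $1+\frac1q=\frac1p+\frac1s$ to the single bound $\|G_n\|_{L_s(\mathbb S^d)}\leq c\,n^{d(1-1/s)}$, and get that bound by $L_1$--$L_\infty$ interpolation from the localisation estimate. The localisation estimate you correctly identify as the technical heart is precisely the paper's own Lemma~\ref{Lemma:Localization} (from \cite{NaPeWa2006}), whose hypotheses your $\zeta$ satisfies ($\zeta\in C^{\kappa}(\mathbb R_+)$, constant on $[0,1]$, supported in $[0,2]$), and your kernel bound is the analogue of the paper's Lemma~\ref{Lemma:bound for filtered norm}. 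One subtlety deserves emphasis: the paper obtains its $L_p$ kernel bound for $p>1$ \emph{from} the Nikolski\^{\i} inequality, so invoking Lemma~\ref{Lemma:bound for filtered norm} directly here would be circular; you avoid this by proving the $L_s$ bound from scratch, via $\|G_n\|_{L_\infty}\leq c\,n^d$ (from the localisation bound at $\theta=0$, or from $|P_\ell^{(d+1)}(t)|\leq 1$ together with $\zeta\geq 0$, which you should state when writing $\|G_n\|_{L_\infty}\asymp G_n(1)$), $\|G_n\|_{L_1}\leq c$ (integration of the localisation bound, where your condition $\kappa>d$ is exactly what makes $\int_0^\infty u^{d-1}(1+u)^{-\kappa}\,\mathrm{d}u$ finite), and the log-convexity inequality $\|G_n\|_{L_s}\leq\|G_n\|_{L_1}^{1/s}\|G_n\|_{L_\infty}^{1-1/s}$. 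Two minor points of hygiene, neither affecting correctness: Young's inequality should be stated for zonal kernels on $\mathbb S^d$ (it holds there, with constant independent of the kernel, by rotation invariance and the usual H\"older or Riesz--Thorin argument), and the endpoint $q=\infty$, for which $s$ becomes the conjugate exponent $p'$, is covered by your scheme without modification.
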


The second one is the following concentration inequality, which was
established in \cite{WuZh2005}.
\begin{lemma}\label{Lemma:Concentration inequality 1.1}
Let $\mathcal G$ be a set of functions on compact metric space $Z$. For every $g\in\mathcal G$, if $|g-\mathbf Eg|\leq B$ almost everywhere and $\mathbf E(g^2)\leq \tilde{c}(\mathbf E g )^\alpha$ for some $B\geq0$, $0\leq\alpha\leq 1$ and $\tilde{c}\geq0$. Then, for any $\varepsilon>0$,
\begin{equation*}
	\mathbf P\left\{\sup_{g\in\mathcal G}\frac{\left|\mathbf
     Eg-\frac1m\sum_{i=1}^mg(z_i)\right|}{\sqrt{(\mathbf
     Eg)^\alpha+\varepsilon^\alpha }}>\varepsilon^{1-\frac{\alpha}2}\right\}\leq2\mathcal N(\mathcal
     G,\varepsilon)\exp\left\{-\frac{m\varepsilon^{2-\alpha}}{2(\tilde{c}+\frac13B\varepsilon^{1-\alpha})}\right\},
\end{equation*}
where $\mathcal N(\mathcal G,\varepsilon)$ denotes the covering
number \cite{WuZh2005} of $\mathcal G$ with radius $\varepsilon$.
\end{lemma}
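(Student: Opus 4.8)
The plan is to derive the uniform (supremum) estimate from a single-function tail bound, obtaining the latter from a one-sided Bernstein inequality that exploits the variance--mean condition $\mathbf E(g^2)\le \tilde c(\mathbf E g)^\alpha$, and then passing to the supremum by discretizing $\mathcal G$ with an $\varepsilon$-net. Throughout I would write $\mu:=\mathbf E g\ge0$ and $S_m:=\mathbf E g-\frac1m\sum_{i=1}^m g(z_i)$. Since $\mathrm{Var}(g)\le \mathbf E(g^2)\le \tilde c\mu^\alpha$ and $|g-\mathbf E g|\le B$ almost everywhere, Bernstein's inequality applied to the i.i.d. mean-zero variables $g(z_i)-\mathbf E g$ gives, for each fixed $g$ and each $t>0$,
\[
   \mathbf P\{S_m>t\}\le \exp\left\{-\frac{mt^2}{2(\tilde c\mu^\alpha+\tfrac13Bt)}\right\},
\]
and the same bound for $\mathbf P\{-S_m>t\}$; combining these two one-sided estimates is what will produce the factor $2$ in the conclusion.

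First I would treat a fixed $g$ and choose $t=\varepsilon^{1-\alpha/2}\sqrt{\mu^\alpha+\varepsilon^\alpha}$, so that the event $\{S_m>t\}$ is exactly the event $\{S_m/\sqrt{\mu^\alpha+\varepsilon^\alpha}>\varepsilon^{1-\alpha/2}\}$, i.e. the normalized ratio appearing in the statement. The key elementary inequality is $\sqrt{\mu^\alpha+\varepsilon^\alpha}\le \varepsilon^{-\alpha/2}(\mu^\alpha+\varepsilon^\alpha)$, which follows from $\varepsilon^{\alpha}\le \mu^\alpha+\varepsilon^\alpha$. It yields $t\le \varepsilon^{1-\alpha}(\mu^\alpha+\varepsilon^\alpha)$, hence $\tfrac13Bt\le \tfrac13B\varepsilon^{1-\alpha}(\mu^\alpha+\varepsilon^\alpha)$, while $t^2=\varepsilon^{2-\alpha}(\mu^\alpha+\varepsilon^\alpha)$ and $\tilde c\mu^\alpha\le \tilde c(\mu^\alpha+\varepsilon^\alpha)$. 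The common factor $(\mu^\alpha+\varepsilon^\alpha)$ then cancels between numerator and denominator of the exponent, leaving precisely $-\frac{m\varepsilon^{2-\alpha}}{2(\tilde c+\frac13B\varepsilon^{1-\alpha})}$ uniformly in $\mu$. This clean cancellation is the technical heart of the single-function estimate.

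Next I would pass from a single $g$ to the supremum over $\mathcal G$ by a covering argument. Let $\{g_j\}$ be an $\varepsilon$-net of $\mathcal G$ of cardinality $\mathcal N(\mathcal G,\varepsilon)$, so that every $g$ admits some $g_j$ with $\|g-g_j\|_\infty\le\varepsilon$. Then both $|\mathbf Eg-\mathbf Eg_j|\le\varepsilon$ and $|\frac1m\sum g(z_i)-\frac1m\sum g_j(z_i)|\le\varepsilon$, so the numerator of the ratio changes by at most $2\varepsilon$; and since $0\le\alpha\le1$ gives $|(\mathbf Eg)^\alpha-(\mathbf Eg_j)^\alpha|\le\varepsilon^\alpha$, the denominator squared stays comparable (never dropping below $\varepsilon^\alpha$). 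Consequently an excursion of the ratio above the threshold for some $g$ forces the ratio for the neighbouring net point $g_j$ above a slightly reduced threshold, and a union bound over the net combined with the single-function estimate delivers the stated bound $2\mathcal N(\mathcal G,\varepsilon)\exp\{\cdots\}$.

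The hard part will be this last, covering step rather than the Bernstein computation: because the normalization $\sqrt{(\mathbf E g)^\alpha+\varepsilon^\alpha}$ depends on $g$, replacing $g$ by a net point perturbs numerator and denominator simultaneously, and I must verify that these perturbations are absorbed into the slack without degrading the exponent. Matching the net radius to the $\varepsilon$ appearing in both $\mathcal N(\mathcal G,\varepsilon)$ and the threshold $\varepsilon^{1-\alpha/2}$ is exactly what keeps this bookkeeping tight.
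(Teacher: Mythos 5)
Your two-step architecture --- a Bernstein bound for a single $g$ exploiting $\mathrm{Var}(g)\leq\mathbf E(g^2)\leq\tilde c(\mathbf Eg)^\alpha$, then a union bound over an $\varepsilon$-net --- is exactly the standard route behind this lemma; note the paper itself gives no proof but imports the statement from \cite{WuZh2005}, so the comparison is with that source's argument, which has the same shape. Your single-function computation is correct and complete: with $t=\varepsilon^{1-\alpha/2}\sqrt{\mu^\alpha+\varepsilon^\alpha}$, the inequality $\varepsilon^\alpha\leq\mu^\alpha+\varepsilon^\alpha$ gives $\tfrac13Bt\leq\tfrac13B\varepsilon^{1-\alpha}(\mu^\alpha+\varepsilon^\alpha)$, the factor $\mu^\alpha+\varepsilon^\alpha$ cancels in the exponent uniformly in $\mu$, and two-sidedness accounts for the factor $2$. (You do implicitly need $\mathbf Eg\geq0$ for $(\mathbf Eg)^\alpha$ and for $|(\mathbf Eg)^\alpha-(\mathbf Eg_j)^\alpha|\leq\varepsilon^\alpha$; this is harmless since the paper applies the lemma to nonnegative $g$.)

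The genuine gap is the covering step, which you correctly flag as the hard part but then assert without carrying out the bookkeeping --- and at the constants as stated, the bookkeeping does not close. With an $\varepsilon$-net in the uniform norm, the numerator moves by up to $2\varepsilon$, while the absolute threshold $t=\varepsilon^{1-\alpha/2}\sqrt{(\mathbf Eg)^\alpha+\varepsilon^\alpha}$ can be as small as $\varepsilon$ (as $\mathbf Eg\to0$), so the perturbation is up to \emph{twice} the threshold: there is no slack to absorb it. Quantitatively, your own estimates ($(\mathbf Eg_j)^\alpha+\varepsilon^\alpha\leq2\bigl((\mathbf Eg)^\alpha+\varepsilon^\alpha\bigr)$ and $\sqrt{(\mathbf Eg)^\alpha+\varepsilon^\alpha}\geq\varepsilon^{\alpha/2}$) yield at best
\[
\frac{\bigl|\mathbf Eg-\frac1m\sum_{i=1}^m g(z_i)\bigr|}{\sqrt{(\mathbf Eg)^\alpha+\varepsilon^\alpha}}
\;\leq\;
\sqrt2\,\frac{\bigl|\mathbf Eg_j-\frac1m\sum_{i=1}^m g_j(z_i)\bigr|}{\sqrt{(\mathbf Eg_j)^\alpha+\varepsilon^\alpha}}+2\varepsilon^{1-\frac{\alpha}2},
\]
so the event $\sup_{g}(\cdot)>\varepsilon^{1-\alpha/2}$ only forces the net-point ratio above $(1-2)\varepsilon^{1-\alpha/2}/\sqrt2<0$ --- a vacuous conclusion, and the union bound proves nothing. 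To make the net argument close, one must either enlarge the threshold by an absolute constant (Wu--Zhou-type statements in the literature read $4\varepsilon^{1-\alpha/2}$, at which point your display forces the net-point ratio above $\sqrt2\,\varepsilon^{1-\alpha/2}$ and the single-function bound applies) or take the net at radius $c\varepsilon$ for small $c$, replacing $\mathcal N(\mathcal G,\varepsilon)$ by $\mathcal N(\mathcal G,c\varepsilon)$. Either repair proves a version of the lemma differing from the literal statement by absolute constants --- which would be harmless for this paper's application, where the lemma is invoked with $\varepsilon=A^p/4$ and all constants are absorbed --- but the statement with threshold exactly $\varepsilon^{1-\alpha/2}$ is not reachable by the triangle-inequality net argument you describe.
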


The third one is a covering number estimate for Banach spaces, as given in} \cite{ZhJe2006}.
\begin{lemma}\label{Lemma:Covering number}
Let $\mathbb B$ be a finite-dimensional Banach space.
 Let $B_R$ be the closed ball of radius $R$
centered at the origin given by
 $B_R:=\{f\in\mathbb B:\|f\|_{\mathbb B}\leq R\}$. Then,
\begin{equation*}
	\log\mathcal N(B_R,\varepsilon)\leq
            \dim(\mathbb B) \log\left(\frac{4R}\varepsilon\right).
\end{equation*}
\end{lemma}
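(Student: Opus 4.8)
The plan is to prove this standard covering-number estimate by a volume-comparison (packing) argument. Set $n := \dim(\mathbb{B})$. Since $\mathbb{B}$ is finite-dimensional I would fix a linear isomorphism $\mathbb{B}\cong\mathbb{R}^n$ and transport Lebesgue measure to $\mathbb{B}$, writing $\mathrm{vol}$ for the resulting measure. The only properties I need are translation invariance, $\mathrm{vol}(x+A)=\mathrm{vol}(A)$, and the dilation homogeneity $\mathrm{vol}(rA)=r^n\,\mathrm{vol}(A)$ for $r>0$, both inherited from Lebesgue measure, together with the fact that the unit ball $B_1=\{f\in\mathbb{B}:\|f\|_{\mathbb{B}}\leq1\}$ satisfies $0<\mathrm{vol}(B_1)<\infty$. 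The latter holds precisely because finite-dimensionality makes $B_1$ compact with nonempty interior.

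Next I would reduce covering to packing. Let $\{f_1,\dots,f_M\}\subseteq B_R$ be a maximal $\varepsilon$-separated family, i.e. $\|f_i-f_j\|_{\mathbb{B}}>\varepsilon$ for $i\neq j$, maximal under inclusion. By maximality every point of $B_R$ lies within distance $\varepsilon$ of some $f_i$, so the $\varepsilon$-balls centered at the $f_i$ cover $B_R$, whence $\mathcal{N}(B_R,\varepsilon)\leq M$. It therefore suffices to bound $M$ from above.

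Then I would carry out the volume comparison. Since the centers are pairwise more than $\varepsilon$ apart, the closed balls $f_i+\tfrac{\varepsilon}{2}B_1$ are pairwise disjoint, and since $\|f_i\|_{\mathbb{B}}\leq R$ each is contained in $B_{R+\varepsilon/2}$. Adding their volumes and using homogeneity and translation invariance yields
\[
  M\left(\frac{\varepsilon}{2}\right)^{n}\mathrm{vol}(B_1)\leq\left(R+\frac{\varepsilon}{2}\right)^{n}\mathrm{vol}(B_1),
\]
so that $M\leq\left(\frac{2R+\varepsilon}{\varepsilon}\right)^{n}$. In the relevant small-$\varepsilon$ regime $\varepsilon\leq 2R$ one has $\frac{2R+\varepsilon}{\varepsilon}\leq\frac{4R}{\varepsilon}$, giving $\log\mathcal{N}(B_R,\varepsilon)\leq\log M\leq n\log\!\left(\frac{4R}{\varepsilon}\right)$, which is the claim; when $\varepsilon>2R$ a single ball centered at the origin already covers $B_R$, so $\mathcal{N}(B_R,\varepsilon)=1$ and the estimate holds trivially in the meaningful range $\varepsilon\leq 4R$.

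The one point that requires care is the \emph{justification of the measure $\mathrm{vol}$ and its scaling law in an abstract Banach space} rather than in $\mathbb{R}^n$: one must check that the comparison is independent of the chosen isomorphism. This is immediate, since the common factor $\mathrm{vol}(B_1)$ cancels on both sides, so no canonical normalization is needed; finite-dimensionality enters only to guarantee $0<\mathrm{vol}(B_1)<\infty$ and the homogeneity of the measure under dilation. Everything else is the routine maximal-separated-set device.
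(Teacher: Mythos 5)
Your proof is correct, and since the paper offers no proof of this lemma at all---it simply quotes the estimate from \cite{ZhJe2006}---your maximal-$\varepsilon$-separated-set plus volume-comparison argument is precisely the standard proof found in that cited literature, so you have in effect reconstructed the reference's argument (existence of a maximal separated set, disjointness of the radius-$\varepsilon/2$ balls, containment in $B_{R+\varepsilon/2}$, and cancellation of $\mathrm{vol}(B_1)$ are all handled correctly). Your closing caveat is also apt: as stated the bound can only hold for $\varepsilon\leq 4R$, since for $\varepsilon>4R$ the right-hand side is negative while $\log\mathcal{N}(B_R,\varepsilon)\geq 0$, but this is harmless because the paper invokes the lemma only with $\varepsilon$ comparable to the radius (e.g.\ $\varepsilon=A^p/4$ against radius $A^p$ in the proof of Theorem~\ref{Theorem:random cubature}), well inside the meaningful range.
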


To state the last lemma, we need  following definitions. Let
$\mathcal X$ be a finite dimensional vector space with norm
$\|\cdot\|_{\mathcal X}$, and $\mathcal Z\subset \mathcal X^*$ be a
finite set. We say that $\mathcal Z$ is a \emph{norm generating set} for
$\mathcal X$ if the mapping $T_{\mathcal Z}: \mathcal
X\rightarrow\mathbb R^{|\mathcal Z|}$ defined by $T_{\mathcal
Z}(x)=(z(x))_{z\in \mathcal Z}$ is injective, and $T_{\mathcal Z}$
is called the \emph{sampling operator}. Let $W:=T_{\mathcal Z}(\mathcal X)$ be
the range of $T_{\mathcal Z}$, then the
 injectivity of $T_{\mathcal Z}$ implies that $T_{\mathcal Z}^{-1}:W\rightarrow \mathcal X$ exists.
 Let $\|\cdot\|_{\mathbb R^{|\mathcal Z|}}$ be the norm of $\mathbb R^{|\mathcal Z|}$ norm, and
 $\|\cdot\|_{\mathbb R^{|\mathcal Z|^*}}$ the dual norm on $\mathbf
 R^{|\mathcal Z|^*}$ for $\|\cdot\|_{\mathbb R^{|\mathcal Z|}}$. Equipping $W$ with the induced norm, and let
 $\|T_{\mathcal Z}^{-1}\|:=\|T_{\mathcal Z}^{-1}\|_{W\rightarrow \mathcal
 X}.$  In addition, let
 $\mathcal K_+$ be the positive cone of $\mathbb R^{|\mathcal Z|}$  {which is the set of all}
 $(r_z)_{z\in \mathcal{Z}}\in\mathbb R^{|\mathcal Z|}$  {such that} $r_z\geq0$. Then
 the following lemma \cite{MhNaWa2001} holds.

\begin{lemma}\label{Lemma:norming set}
Let $\mathcal Z$ be a norm generating set for $\mathcal X$, with
$T_{\mathcal Z}$ the corresponding sampling operator. If $g\in
\mathcal X^*$ with $\|g\|_{\mathcal X^*}\leq \mathcal A$, then there
exist positive numbers $\{a_z\}_{z\in \mathcal Z}$, depending only
on $g$ such that for every $x\in\mathcal X,$
\begin{equation*}
	g(x)=\sum_{z\in \mathcal Z}a_zz(x),
\end{equation*}
and
\begin{equation*}
	\|(a_z)\|_{\mathbb R^{|\mathcal Z|^*}}\leq \mathcal A\|T_{\mathcal Z}^{-1}\|.
\end{equation*}
Also, if $W$ contains an interior point $v_0\in \mathcal K_+$ and if
$g(T_{\mathcal Z}^{-1}v)\geq0$ when $v\in W\cap \mathcal K_+$, then
we may choose $a_z\geq 0.$
\end{lemma}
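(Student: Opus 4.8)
The plan is a duality argument organized around the sampling operator $T_{\mathcal Z}$. Since $T_{\mathcal Z}$ is injective, the functional $g$ factors through its range: I would set $\ell:=g\circ T_{\mathcal Z}^{-1}$, a linear functional defined on the subspace $W=T_{\mathcal Z}(\mathcal X)\subseteq\mathbb R^{|\mathcal Z|}$ carrying the norm $\|\cdot\|_{\mathbb R^{|\mathcal Z|}}$. First I would estimate its dual norm on $W$: for $x\in\mathcal X$, writing $w=T_{\mathcal Z}(x)$, one has $|\ell(w)|=|g(x)|\le\mathcal A\|x\|_{\mathcal X}=\mathcal A\|T_{\mathcal Z}^{-1}w\|_{\mathcal X}\le\mathcal A\|T_{\mathcal Z}^{-1}\|\,\|w\|_{\mathbb R^{|\mathcal Z|}}$, where the first inequality uses $\|g\|_{\mathcal X^*}\le\mathcal A$ and the last uses that $\|T_{\mathcal Z}^{-1}\|$ is the operator norm $W\to\mathcal X$. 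Hence $\ell$ is bounded on $W$ with $\|\ell\|_{W^*}\le\mathcal A\|T_{\mathcal Z}^{-1}\|$.

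Next I would invoke the Hahn--Banach theorem to extend $\ell$ from the subspace $W$ to a functional $\widetilde\ell$ on all of $\mathbb R^{|\mathcal Z|}$ without increasing its norm, so that $\|\widetilde\ell\|_{\mathbb R^{|\mathcal Z|^*}}\le\mathcal A\|T_{\mathcal Z}^{-1}\|$. As every functional on $\mathbb R^{|\mathcal Z|}$ is represented by a coefficient vector, there are real numbers $(a_z)_{z\in\mathcal Z}$ with $\widetilde\ell(w)=\sum_{z\in\mathcal Z}a_zw_z$. Evaluating at $w=T_{\mathcal Z}(x)$ and using $\widetilde\ell|_W=\ell$ gives $\sum_{z\in\mathcal Z}a_zz(x)=\widetilde\ell(T_{\mathcal Z}x)=\ell(T_{\mathcal Z}x)=g(x)$ for every $x\in\mathcal X$; since the dual norm of $\widetilde\ell$ equals $\|(a_z)\|_{\mathbb R^{|\mathcal Z|^*}}$, this yields the stated bound. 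Because $\widetilde\ell$ is built from $\ell=g\circ T_{\mathcal Z}^{-1}$, the coefficients depend only on $g$, as claimed.

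For the sign assertion I would replace the ordinary extension by an order-preserving one. Under the additional hypotheses, $\ell$ is nonnegative on $W\cap\mathcal K_+$ and $W$ contains an interior point $v_0$ of the cone $\mathcal K_+$; this is exactly the configuration in which the M.\ Riesz (Krein) positive-extension theorem applies, producing an extension $\widetilde\ell$ that is nonnegative on all of $\mathcal K_+$. Evaluating such a $\widetilde\ell$ on the standard basis vectors $e_z\in\mathcal K_+$ gives $a_z=\widetilde\ell(e_z)\ge0$, the desired nonnegativity.

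The hard part will be reconciling the two conclusions: the plain Hahn--Banach step controls $\|(a_z)\|_{\mathbb R^{|\mathcal Z|^*}}$ but is silent about signs, while the positive-extension step controls signs but not, a priori, the norm. The delicate point is thus to exhibit a \emph{single} extension that is simultaneously norm-bounded and positive. I would address this by carrying out the extension one coordinate at a time, as in the inductive proof of Hahn--Banach, and at each step choosing the value of the new coefficient inside the interval jointly prescribed by the norm constraint and the positivity (cone) constraint; the role of the interior point $v_0\in W\cap\mathcal K_+$ is to guarantee that this interval is nonempty. Verifying that these two families of constraints are compatible at every step, rather than applying the two extension principles separately, is where the genuine work lies.
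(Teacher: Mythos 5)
First, a point of reference: the paper does not prove this lemma at all --- it is quoted verbatim from \cite{MhNaWa2001} --- so your attempt can only be compared with the standard argument given there. Your first two paragraphs reproduce that argument exactly and correctly: factor $g$ through the injective sampling operator to get $\ell := g\circ T_{\mathcal Z}^{-1}$ on $W$, estimate $\|\ell\|_{W^*}\leq \mathcal A\|T_{\mathcal Z}^{-1}\|$, extend by Hahn--Banach without norm increase, represent the extension by a coefficient vector whose dual norm equals the functional norm, and, for the sign assertion, invoke the M.~Riesz/Krein positive-extension theorem (the interior point $v_0\in W\cap\mathcal K_+$ being precisely its hypothesis) and evaluate on the standard basis vectors. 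Two cosmetic remarks: ``positive numbers'' in the first sentence of the statement is a typo for ``real numbers'' (with only the norm hypothesis, positivity is clearly impossible in general), and ``depending only on $g$'' should not be read as uniqueness, since Hahn--Banach extensions are not unique.

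Your third paragraph, however, sets itself a goal that is both unattainable and not part of the lemma as proved in \cite{MhNaWa2001}: a \emph{single} coefficient vector that is simultaneously nonnegative and satisfies the bound with the exact constant $\mathcal A\|T_{\mathcal Z}^{-1}\|$. This simultaneous claim is false, so your coordinate-by-coordinate scheme must at some step meet an empty interval no matter how cleverly it is run. Concretely: take $\mathbb R^3$ with the Euclidean norm (whose dual is again Euclidean), $\mathcal K_+$ the positive orthant, $\mathcal X=W=\{(s+t,\,s-t,\,s):s,t\in\mathbb R\}$ with the induced norm, $\mathcal Z$ the three coordinate functionals (so $T_{\mathcal Z}$ is the inclusion and $\|T_{\mathcal Z}^{-1}\|=1$), $v_0=(1,1,1)\in W$ an interior point of $\mathcal K_+$, and $g(sv_0+tu):=1.2\,s+t$ with $u=(1,-1,0)$. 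One checks $g\geq 0$ on $W\cap\mathcal K_+$ (since $1.2\geq 1$) and $\|g\|_{\mathcal X^*}=\sup\{1.2s+t: 3s^2+2t^2\leq 1\}=\sqrt{0.98}$. Any representing vector must satisfy $a_1+a_2+a_3=1.2$ and $a_1-a_2=1$; nonnegativity forces $a_1=1+a_2$, $a_3=0.2-2a_2$ with $a_2\in[0,0.1]$, whence $a_1^2+a_2^2+a_3^2\geq 1.04>0.98$, while the norm-minimal (Hahn--Banach) representation $(0.9,-0.1,0.4)$ attains $0.98$ but has a negative entry. So the correct reading --- and the one in \cite{MhNaWa2001} --- is that the final sentence is a \emph{separate} assertion: under the cone hypotheses a nonnegative representation exists (via Krein), with no claim that it also achieves the first sentence's bound. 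Your two extension steps should simply be kept apart; merging them is exactly where your plan fails. (A side effect worth noting: applications that want both properties for the same weights, as in Theorem~\ref{Theorem:random cubature} of this paper where $\sum_i|w_{i,n}|^2\leq 2/N$ and $w_{i,n}\geq 0$ are asserted jointly, implicitly rely on slack in the constants and quantitative information such as the Marcinkiewicz--Zygmund inequalities, not on the lemma verbatim.)
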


We are ready to prove Theorem~\ref{Theorem:random cubature}.
\begin{proof}[Proof of Theorem~\ref{Theorem:random cubature}]
For $p=1,2$, without loss of generality, we prove Theorem~\ref{Theorem:random cubature} for $P_n\in\Pi_n^d$ satisfying $\|P_n\|_{p,\mu}= A$ for some constant $A>0$. For an arbitrary $P_n\in\Pi_{n}^{d}$ with $\|P_n\|_{p,\mu}= A$, it follows from \eqref{condition on distribution} and Lemma~\ref{Lemma:Nikolskii} that
\begin{equation*}
	\|P_n\|_{L_\infty(\mathbb S^d)}\leq
               \tilde{C}_1n^{\frac{d}{p}}\|P_n\|_{L_p(\mathbb S^d)}
               \leq c^{1/p}_4\tilde{C}_1n^{\frac{d}{p}}\|P_n\|_{p,\mu},
\end{equation*}
 and
\begin{eqnarray*}
  \mathbf E\left\{|P_n|^{2p}\right\}=\int_{\mathbb S^d}|P_n(\mathbf x)|^{2p}{\rm d}\mu(\mathbf x)\leq \|P_n\|^p_{L_\infty(\mathbb S^d)}
              \int_{\mathbb S^d}|P_n(\mathbf x)|^p{\rm d}\mu(\mathbf x)
              \leq
              c_4(\tilde{C}_1)^pn^d\|P_n\|^p_{p,\mu} \mathbf E\left[|P_n|^p\right].
\end{eqnarray*}
Let $Z=\sph{d}$, $g(z_i)=|P_n(\mathbf x_i)|^p$, $B=2c_4(\tilde{C}_1)^pn^d A^p$, $\tilde{c}=c_4(\tilde{C}_1)^pn^d A^p$, $m=N$, $\alpha=1$ and $\mathcal G_p=\{|P_n|^p:P_n\in\Pi_n^d,\|P_n\|_{p,\mu}=A\}$ in Lemma~\ref{Lemma:Concentration inequality 1.1}. Then, for any $\varepsilon>0$,
\begin{align*}
     &\mathbf P\left\{\sup_{P_n\in\Pi_n^{d},\|P_n\|_{p,\mu}=A}
     \frac{\left|\|P_n\|_{p,\mu}^p-\frac1{N}\sum_{i=1}^{N}
     |P_n(\mathbf x_i)|^p\right|}{
     \sqrt{\|P_n\|_{p,\mu}^p+\varepsilon}}>\sqrt{\varepsilon}\right\}\\
     &\quad \leq 2\mathcal N(\mathcal G_p,\varepsilon)
     \exp\left\{
     -\frac{{N}\varepsilon}{\tilde{C}_2n^dA^p}\right\},
\end{align*}
where $\tilde{C}_2=10c_4(\tilde{C}_1)^p/3$. For $p=1$, we have
$|P_n|-|P_n^*|\leq|P_n-P_n^*|$ for any $P_n,P_n^*\in\Pi_n^d$. Then,
it follows from the definition of the covering number that
$\mathcal N(\mathcal G_1,\varepsilon)\leq \mathcal N(\mathcal
      G_1',\varepsilon)$,
where $\mathcal G_1':=\{P_n\in\Pi_n^d:\|P_n\|_{p,\mu}=A\}$. For
$p=2$, $|P_n|^2\in\Pi_{2n}^d$. Let $\mathcal G_2':=\{P_n\in\Pi_{2n}^d:\|P_n\|_{p,\mu}=A\}$. Then, $\mathcal N(\mathcal G_2,\varepsilon)=\mathcal N(\mathcal
      G_2',\varepsilon)$.
It then follows from Lemma~\ref{Lemma:Covering
number} that for $p=1,2$,
\begin{align*}
     &\mathbf P\left\{\sup_{P_n\in\Pi_n^{d},\|P_n\|_{p,\mu}=A}
     \frac{\left|\|P_n\|_{p,\mu}^p-\frac1{N}\sum_{i=1}^{N}
     |P_n(\mathbf x_i)|^p\right|}{
     \sqrt{\|P_n\|_{p,\mu}^p+\varepsilon}}>\sqrt{\varepsilon}\right\}\\
     &\quad\leq 2\exp\left\{(2n)^d\log\frac{4A^p}{\varepsilon}
     -\frac{{N}\varepsilon}{\tilde{C}_2n^dA^p}\right\},
\end{align*}
where we use $\dim \mathcal{G}_p\leq (pn)^d$ for $p=1,2$.
Let
$\varepsilon=A^p/4$. As $N/n^d>c$ for a sufficiently large $c>0$, 
\begin{equation*}
	(2n^d)\log \frac{4A^p}{\varepsilon}<\frac{N\varepsilon}{\tilde{C}_2 n^d A^p}.
\end{equation*}
Then, with confidence
\begin{equation}\label{eq:confidence_a}
	1-2\exp\left\{-\tilde{C}_3 N/n^d\right\},
\end{equation}
there holds
\begin{equation*}
	\left|\|P_n\|_{p,\mu}^p-\frac1{N}\sum_{i=1}^{N}|P_n(\mathbf x_i)|^p\right|
     \leq \sqrt{\varepsilon(\|P_n\|^p_{p,\mu}+\varepsilon)}=\frac{\sqrt{5}}{4}\|P_n\|^p_{p,\mu}.
\end{equation*}
Then, with the same confidence as \eqref{eq:confidence_a},
\begin{equation}\label{MZ}
     \frac{1}3\|P_n\|_{p,\mu}^p\leq\frac1{N}\sum_{i=1}^{N}
           |P_n(\mathbf x_i)|^p\leq\frac{5}3\|P\|_{p,\mu}^p\quad
           \forall P_n \in\Pi_n^{d},\ p=1,2.
\end{equation}

Now, we use \eqref{MZ} with $p=2$ and Lemma~\ref{Lemma:norming set}
to prove Theorem~\ref{Theorem:random cubature}. In Lemma
\ref{Lemma:norming set}, we take $\mathcal X=\Pi_n^{d}$,
$\|P_n\|_{\mathcal X}=\|P_n\|_{2,\mu}$, and $\mathcal Z$ to be the
set of point evaluation functionals $\{\delta_{\mathbf
x_i}\}_{i=1}^{N}$. The operator $T_{\mathcal Z}$ is then the
restriction map $P_n\mapsto P_n|_{X_N} $ and
\begin{equation*}
	\|f\|_{X_N,2}:= \left(\frac1N\sum_{i=1}^N|f(\mathbf x_i)|^2\right)^\frac12.
\end{equation*}
It follows from \eqref{MZ} that with confidence at least
\begin{equation*}
	1-2\exp\left\{-\tilde{C}_3 N/n^d \right\},
\end{equation*}
there holds $\|T_{\mathcal Z}^{-1}\|\leq \sqrt{\frac{5}{3}}$. We take $g$ to be the functional
\begin{equation*}
	g: P_n\mapsto \int_{\mathbb S^d}P_n(x)\mathrm{d}\mu(x).
\end{equation*}
By H\"{o}lder inequality, $\|g\|_{\mathcal X^*}\leq 1$.
Lemma~\ref{Lemma:norming set} then shows that there exists a set of real numbers $\{w_{i,n}\}_{i=1}^N$ such that
\begin{equation*}
	\int_{\mathbb S^d}P_n(x)\mathrm{d}\mu(x)=\sum_{i=1}^{N}w_{i,n}P_n(\mathbf x_i)\quad \forall P_n\in \Pi_n^d.
\end{equation*}
holds, and $\frac1N\sum_{i=1}^{N}\left(\frac{ w_{i,n} }{1/{N}}\right)^2\leq 2$, with confidence at least $1-2\exp\left\{-\tilde{C}_3 N/n^d \right\}$.

Finally, we use the second assertion of Lemma~\ref{Lemma:norming
set} and \eqref{MZ} with $p=1$ to prove  the positivity of
$w_{i,n}$. Since $1\in \Pi_n^d$, we have
$v_0:=1|_{X_N}=(1,1,\dots,1)$ is an interior point of $\mathcal
K_+$. For $P_n\in\Pi_n^d$, $T_{\mathcal
Z}P_n=P_n|_{X_N}$ is in $W\cap\mathcal K_+$ if and only if
$P_n(\mathbf x_i)\geq 0$ for all $\mathbf x_i\in X_N$. For an arbitrary $P_n$ satisfying $P_n(\mathbf x_i)\geq0$ with $\mathbf x_i\in X_N$, define $\xi_i(P_n)=P_n(\mathbf x_i)$. From Lemma~\ref{Lemma:Nikolskii} and \eqref{condition on distribution}, we obtain the following estimates: for $i=1,\dots,N$,
\begin{align*}
	|\xi_i| &\leq \|P_n\|_{L_\infty(\mathbb S^d)}\leq
         \tilde{C}_1n^d\|P_n\|_{L_1(\mathbb S^d)}\leq
         \tilde{C}_1c_4n^d\|P_n\|_{1,\mu},\\
    |\xi_i-\mathbf E\xi_i| &\leq
       2\|P_n\|_{L_\infty(\mathbb S^d)}\leq 2\tilde{C}_1c_4n^d\|P_n\|_{1,\mu},\\
	\mathbf  E\xi_i^2 &\leq \|P_n\|_{L_\infty(\mathbb S^d)}
      \|P_n\|_{1,\mu}\leq \tilde{C}_1 c_4 n^d \|P_n\|^2_{1,\mu}.
\end{align*}
Applying Lemma~\ref{Lemma:Concentration inequality 1.1} with
$B=2 \tilde{C}_1 c_4 n^d A$, $\tilde{c}=\tilde{C}_1 c_4 n^d A^2$ and $\alpha=0$ to the set $\{P_n:P_n\in\Pi_n^d, \|P_n\|_{1,\mu}=A\}$, by Lemma~\ref{Lemma:Covering number}, we obtain for any $\varepsilon>0$,
\begin{align*}
     &\mathbf P\left\{\sup_{P_n\in\Pi_n^{d}, 
     P_n|_{X_N}\geq0, \|P_n\|_{1,\mu}=A}
     \left| g(P_n)-\frac1{N}
     \sum_{i=1}^{N} P_n(\mathbf x_i) \right|>\varepsilon\right\}\\
     &\quad\leq
     2\exp\left\{n^d\log\frac{4A }{\varepsilon}
     -\frac{{N}\varepsilon^2}{2\tilde{C}_1c_4n^dA (A +2\varepsilon/3)}\right\}.
\end{align*}
Let $\varepsilon=A/4$. We then obtain that with
confidence
\begin{equation*}
	1-2\exp\left\{-\tilde{C}_4 N/n^d\right\},
\end{equation*}
there holds
\begin{equation*}
	 \left|g(P_n)-\frac1{N}
     \sum_{i=1}^{N} P_n(\mathbf x_i)\right|\leq
     \frac14\|P_n\|_{1,\mu} \quad\forall P_n\in \Pi_n^d.
\end{equation*}
This and \eqref{MZ} imply that, with confidence $1-4\exp\bigl\{-C N/n^d\bigr\}$ (where $C$ depends only on $\tilde{C}_3$ and $\tilde{C}_4$), for any $P_n$ satisfying $P_n(\mathbf x_i)\geq0$ $\forall\mathbf x_i\in X_N$, the inequality
\begin{equation*}
	\left|g(P_n)-\frac1{N}\sum_{i=1}^{N} P_n(\mathbf x_i)\right|\leq 
     \frac{3}4\frac1{N}\sum_{i=1}^{N} P_n(\mathbf x_i)
\end{equation*}
holds, then,
\begin{equation*}
	g(P_n)\geq \frac{1}4 \frac1{N}\sum_{i=1}^{N}
             P_n(\mathbf x_i) \geq0.
\end{equation*}
It hence follows from Lemma~\ref{Lemma:norming
set} that $w_{i,n}\geq 0$, thus completing the proof of Theorem~\ref{Theorem:random cubature}.
\end{proof}

\subsection{Proofs for the theorems in Section~\ref{Sec.Fixed design}}
The following lemma shows that the filtered kernel has the following
localization property, as proved by \cite{NaPeWa2006} and
also \cite{Wang2016,WaLeSlWo2017,WaSlWo2018}.
\begin{lemma}[\cite{NaPeWa2006}]\label{Lemma:Localization}
Let $d\geq 2$ and $\eta$ be a filter in $C^\kappa(\mathbb
R_+)$ with $1\leq \kappa<\infty$ such that $\eta$ is constant
on $[0,a]$ for some $0<a<2$. Then,
\begin{equation*}
    \left|K_n(\cos\theta)\right|
              \leq \frac{c n^{d}}{(1+n\theta)^\kappa},\quad n\geq1,
\end{equation*}
where $c$ is a constant depending only on $d,\eta$ and $\kappa$.
\end{lemma}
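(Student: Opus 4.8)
The plan is to collapse the kernel to a one–dimensional Gegenbauer sum and then read off the decay by repeated Abel summation, using the smoothness of $\eta$. Writing $\lambda:=(d-1)/2$, the dimension formula gives the normalization $\frac{Z_{d,\ell}}{|\mathbb S^d|}P_\ell^{(d+1)}(\cos\theta)=\frac{\ell+\lambda}{\lambda|\mathbb S^d|}\,C_\ell^{\lambda}(\cos\theta)$, where $C_\ell^{\lambda}$ is the (unnormalized) Gegenbauer polynomial, so that $K_n(\cos\theta)=\frac{1}{\lambda|\mathbb S^d|}\sum_{\ell}\eta(\ell/n)(\ell+\lambda)C_\ell^{\lambda}(\cos\theta)$. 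Since $\eta$ is a filter satisfying \eqref{Filter} (supported in $[1/2,2]$) and is $C^\kappa$ and constant on $[0,a]$, the sum is finite, runs over $\ell\in[n/2,2n]$, and $\eta(\cdot/n)$ together with all its differences up to order $\kappa$ vanishes at the two ends of this range. I would first dispose of the region $n\theta\le1$: the trivial estimate $|P_\ell^{(d+1)}(\cos\theta)|\le1$ with $Z_{d,\ell}\asymp\ell^{d-1}$ gives $|K_n(\cos\theta)|\le c\sum_{\ell\asymp n}\ell^{d-1}\le cn^d$, which already matches the claim because $(1+n\theta)^\kappa\le2^\kappa$ there.

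For the bulk $1/n<\theta<\pi-1/n$ I would insert the Darboux (Szeg\H{o}) trigonometric asymptotic $C_\ell^{\lambda}(\cos\theta)=\mathrm{Re}\,[A_\ell(\theta)\,e^{\imu(\ell+\lambda)\theta}]+R_\ell(\theta)$, where the amplitude obeys $|A_\ell(\theta)|\le c\,\ell^{\lambda-1}(\sin\theta)^{-\lambda}$ and the difference bounds $|\overrightarrow{\Delta}_\ell^{\,j}A_\ell(\theta)|\le c_j\,\ell^{\lambda-1-j}(\sin\theta)^{-\lambda}$ for $0\le j\le\kappa$, and $R_\ell$ is of lower order. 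This turns $K_n$ into an oscillatory sum $\sum_\ell g_\ell e^{\imu\ell\theta}$ with $g_\ell=\eta(\ell/n)(\ell+\lambda)A_\ell(\theta)$, up to a fixed phase. Using $e^{\imu\ell\theta}=(e^{\imu\theta}-1)^{-1}\overrightarrow{\Delta}_\ell e^{\imu\ell\theta}$ and summing by parts $\kappa$ times, the boundary terms vanishing because $\eta(\cdot/n)$ is flat to order $\kappa$ at both endpoints, gives $\bigl|\sum_\ell g_\ell e^{\imu\ell\theta}\bigr|=|e^{\imu\theta}-1|^{-\kappa}\bigl|\sum_\ell(\overrightarrow{\Delta}_\ell^{\,\kappa}g_\ell)\,e^{\imu(\ell+\kappa)\theta}\bigr|$. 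Here $|e^{\imu\theta}-1|\ge(2/\pi)\theta$, and the discrete Leibniz rule together with $|\overrightarrow{\Delta}_\ell^{\,j}\eta(\ell/n)|\le cn^{-j}$ yields $|\overrightarrow{\Delta}_\ell^{\,\kappa}g_\ell|\le c\,n^{\lambda-\kappa}(\sin\theta)^{-\lambda}$. Summing the $O(n)$ nonzero terms gives $|K_n(\cos\theta)|\le c\,\theta^{-\kappa}(\sin\theta)^{-\lambda}n^{\lambda-\kappa+1}$; since $\sin\theta\asymp\min\{\theta,\pi-\theta\}$, $n\theta>1$ and $d=2\lambda+1$, a short computation shows the right-hand side is at most $c\,n^d(1+n\theta)^{-\kappa}$ throughout the bulk.

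It remains to treat the cap $\theta\in[\pi-1/n,\pi)$, where $(1+n\theta)^{-\kappa}\asymp n^{-\kappa}$, so the target is the uniform bound $c\,n^{d-\kappa}$; the trivial estimate only yields $n^d$, so genuine cancellation is needed. I would use the parity relation $C_\ell^{\lambda}(\cos\theta)=(-1)^\ell C_\ell^{\lambda}(\cos\phi)$ with $\phi=\pi-\theta\in(0,1/n]$, so that $K_n(\cos\theta)=\frac{1}{\lambda|\mathbb S^d|}\sum_\ell \eta(\ell/n)(\ell+\lambda)(-1)^\ell C_\ell^{\lambda}(\cos\phi)$, and again sum by parts $\kappa$ times against the oscillation $(-1)^\ell=e^{\imu\pi\ell}$, whose geometric factor $(e^{\imu\pi}-1)^{-\kappa}=(-2)^{-\kappa}$ is bounded. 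Because $\ell\phi=O(1)$ on the support, the polynomial sits near its peak and its $\ell$-differences satisfy $|\overrightarrow{\Delta}_\ell^{\,j}C_\ell^{\lambda}(\cos\phi)|\le c\,\ell^{2\lambda-1-j}$; feeding this and $|\overrightarrow{\Delta}_\ell^{\,j}\eta(\ell/n)|\le cn^{-j}$ into the Leibniz rule gives $|\overrightarrow{\Delta}_\ell^{\,\kappa}[\eta(\ell/n)(\ell+\lambda)C_\ell^{\lambda}(\cos\phi)]|\le c\,n^{2\lambda-\kappa}$, and summing the $O(n)$ terms produces $|K_n(\cos\theta)|\le c\,n^{2\lambda-\kappa+1}=c\,n^{d-\kappa}$, as required.

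The main obstacle is not the Abel summation but the two families of sharp Gegenbauer estimates it consumes: the amplitude/difference bounds $|\overrightarrow{\Delta}_\ell^{\,j}A_\ell(\theta)|\le c_j\,\ell^{\lambda-1-j}(\sin\theta)^{-\lambda}$ in the bulk and the near-peak difference bounds $|\overrightarrow{\Delta}_\ell^{\,j}C_\ell^{\lambda}(\cos\phi)|\le c\,\ell^{2\lambda-1-j}$ near $\theta=\pi$. These are exactly what guarantee that each summation by parts costs one factor $\theta^{-1}$ (or a bounded constant near $\pi$) against a gain of $n^{-1}$, so that after $\kappa$ steps the exponents assemble into $n^d(1+n\theta)^{-\kappa}$; keeping the $(\sin\theta)^{-\lambda}$ weights uniform up to the endpoints and checking that the Szeg\H{o} remainder $R_\ell$ contributes no worse is where the real care lies. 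Alternatively, all of these polynomial estimates may be quoted from the standard asymptotic theory of ultraspherical polynomials, in which case the proof reduces to the bookkeeping of the three regions above.
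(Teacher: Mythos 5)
Your three-region skeleton is sound, and the exponent bookkeeping checks out: the trivial bound on $n\theta\le 1$, the Abel-summation gain of $n^{-1}$ per $\theta^{-1}$ in the bulk (with $(n\theta)^{-\lambda}\le 1$ absorbing the $(\sin\theta)^{-\lambda}$ weight), and the parity trick at $\theta$ near $\pi$ all assemble correctly into $c\,n^{d}(1+n\theta)^{-\kappa}$ with $d=2\lambda+1$. But there is one genuine gap, which you half-acknowledge and then defer: the Darboux remainder $R_\ell$ cannot be dismissed as ``lower order'' for the values of $\kappa$ that actually matter. The one-term Darboux formula has a remainder of size $O\bigl(\ell^{\lambda-2}(\sin\theta)^{-\lambda-1}\bigr)$ that carries no usable oscillation, so it does not participate in the summation by parts; summing it crudely over the $O(n)$ terms $\ell\asymp n$ contributes $O\bigl(n^{\lambda}(\sin\theta)^{-\lambda-1}\bigr)$, which at $\theta\asymp 1$ is $O(n^{(d-1)/2})$, while the target there is $c\,n^{d-\kappa}$. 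These are incompatible as soon as $\kappa>\lambda+1=(d+1)/2$ --- in particular for the paper's standing hypothesis $\kappa\geq\lfloor(d+3)/2\rfloor$ (already for $d=2$, $\kappa=2$: remainder $\asymp n^{1/2}$ versus target $n^{0}$). To close this you must either (i) take the full Szeg\H{o}--Darboux expansion to roughly $\kappa$ terms, each correction being again of oscillatory symbol type $\ell^{\lambda-1-j}(\sin\theta)^{-\lambda-j}e^{\pm\imu(\ell+\lambda)\theta}$ and hence amenable to the same $\kappa$-fold summation by parts, with a remainder small enough outright; or (ii) prove symbol-type bounds on the $\ell$-differences of the remainder itself. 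Similarly, the near-peak difference bounds $|\Delta_\ell^{j}C_\ell^{\lambda}(\cos\phi)|\le c\,\ell^{2\lambda-1-j}$ for $\ell\phi=O(1)$ are true (Mehler--Heine/Hilb regime) but are asserted, not proved; they are exactly as deep as the localization estimate you are proving, so they must be quoted precisely or established.

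For comparison: the paper itself gives no proof --- the lemma is cited from Narcowich--Petrushev--Ward. Their published argument is genuinely different from yours and neatly sidesteps the remainder problem: instead of trigonometric asymptotics, they iterate Szeg\H{o}'s parameter-raising identity, writing each Jacobi/Gegenbauer polynomial as a difference of polynomials with the first Jacobi parameter increased by one, and transfer the resulting differences onto the filter coefficients by Abel summation; each iteration converts one order of smoothness of $\eta$ into one power of decay, after which a single uniform pointwise bound on Jacobi polynomials finishes all regimes at once --- no Darboux expansion, no separate cap at $\theta=\pi$, no parity trick. Your route is more transparent about where the decay comes from (stationary phase against a $C^\kappa$ cutoff flat at both edges of $[n/2,2n]$), but it pays for that transparency by needing the full multi-term asymptotics with difference estimates on amplitudes and remainders; the parameter-raising route buys uniformity across $\theta\in[0,\pi]$ with a single algebraic identity. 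Either way, your observation that the hypothesis ``$\eta$ constant on $[0,a]$'' serves only to kill the left-edge boundary terms in the summation by parts is correct.
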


Lemma~\ref{Lemma:Localization} gives the following upper bound of
the $L_p$ norm of the filtered kernel.

\begin{lemma}\label{Lemma:bound for filtered norm}
Let $d\geq 2$, $1\leq p\leq \infty$  and $\eta$ be a filter
in $C^\kappa(\mathbb R_+)$ with
$\kappa\geq\left\lfloor\frac{d+3}2\right\rfloor$ such that
$\eta$ is constant on $[0,a]$ for some $0<a<2$. Then
\begin{equation*}
    \|K_n(\bx\cdot \cdot)\|_{L_p(\mathbb S^d)}\leq
        c_1 n^{d(1-1/p)}\quad \forall \bx\in\mathbb S^d,\; n\geq1,
\end{equation*}
where $c_1$ is a constant depending only on $d,\eta,\kappa$ and $p$.
\end{lemma}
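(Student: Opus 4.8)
The plan is to convert the surface integral defining the $L_p$ norm into a single integral in the geodesic angle and then feed in the pointwise decay of Lemma~\ref{Lemma:Localization}. First I would check that the hypotheses of that lemma hold: a filter $\eta$ obeying \eqref{Filter} is supported in $[1/2,2]$, hence vanishes and in particular is constant on $[0,1/2]$, so Lemma~\ref{Lemma:Localization} applies with $a=1/2$. Because $K_n(\bx\cdot\cdot)$ is zonal and the measure $\omega$ is rotation invariant, the quantity $\|K_n(\bx\cdot\cdot)\|_{L_p(\mathbb S^d)}$ does not depend on $\bx$; fixing $\bx$ and using the zonal integration formula $\int_{\mathbb S^d}F(\bx\cdot\bz)\,\mathrm d\omega(\bz)=|\mathbb S^{d-1}|\int_0^\pi F(\cos\theta)(\sin\theta)^{d-1}\,\mathrm d\theta$ gives
\begin{equation*}
  \|K_n(\bx\cdot\cdot)\|_{L_p(\mathbb S^d)}^p
  =|\mathbb S^{d-1}|\int_0^\pi |K_n(\cos\theta)|^p(\sin\theta)^{d-1}\,\mathrm d\theta .
\end{equation*}

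Next I would insert $|K_n(\cos\theta)|\leq c\,n^d(1+n\theta)^{-\kappa}$ from Lemma~\ref{Lemma:Localization}, bound $\sin\theta\leq\theta$, and rescale by $u=n\theta$ to pull out the anticipated power of $n$:
\begin{equation*}
  \|K_n(\bx\cdot\cdot)\|_{L_p(\mathbb S^d)}^p
  \leq |\mathbb S^{d-1}|\,c^p\,n^{dp}\int_0^\pi\frac{\theta^{d-1}}{(1+n\theta)^{\kappa p}}\,\mathrm d\theta
  =|\mathbb S^{d-1}|\,c^p\,n^{d(p-1)}\int_0^{n\pi}\frac{u^{d-1}}{(1+u)^{\kappa p}}\,\mathrm d u .
\end{equation*}
Taking $p$-th roots, the claimed exponent $n^{d(1-1/p)}$ will appear as soon as the residual integral is bounded by a constant independent of $n$.

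The decisive point is therefore the convergence of $\int_0^\infty u^{d-1}(1+u)^{-\kappa p}\,\mathrm d u$, which holds exactly when $\kappa p>d$; the regularity hypothesis $\kappa\geq\lfloor(d+3)/2\rfloor$ is what secures this, since it yields $\kappa>d/2$ and hence $\kappa p>d$ for the range of $p$ driving the $L_2$ error analysis of the later theorems (for $p=2$ the requirement is precisely $\kappa>d/2$). I would treat $p=\infty$ separately and more cheaply, reading $\|K_n(\bx\cdot\cdot)\|_{L_\infty(\mathbb S^d)}\leq c\,n^d$ directly off the localization estimate at $\theta=0$. I expect the main obstacle to be exactly this bookkeeping of the smoothness threshold: confirming that the stated $\kappa$ makes the tail integral converge uniformly in $n$, so that no spurious logarithmic factor survives at the borderline $\kappa p=d$, is the only place the smoothness of $\eta$ is genuinely consumed. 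As an independent check for $p\geq2$ one may bypass the integral altogether: Parseval together with the addition theorem gives $\|K_n(\bx\cdot\cdot)\|_{L_2(\mathbb S^d)}^2=\sum_{\ell}\eta(\ell/n)^2 Z_{d,\ell}/|\mathbb S^d|\leq C n^d$, since $\eta(\ell/n)$ is supported on $\ell\in[n/2,2n]$ where $Z_{d,\ell}\asymp n^{d-1}$, and interpolating this $L_2$ bound against the $L_\infty$ bound by H\"older reproduces $\|K_n(\bx\cdot\cdot)\|_{L_p(\mathbb S^d)}\leq c_1 n^{d(1-1/p)}$.
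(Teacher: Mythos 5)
There is a genuine gap, and it sits exactly at the point you flagged as ``bookkeeping.'' Your direct argument needs the tail integral $\int_0^\infty u^{d-1}(1+u)^{-\kappa p}\,\mathrm{d}u$ to converge, i.e.\ $\kappa p>d$. Under the stated hypothesis $\kappa\geq\lfloor(d+3)/2\rfloor$ this fails for small $p$: at $p=1$ one has $\lfloor(d+3)/2\rfloor=d$ for $d=2,3$ and $\lfloor(d+3)/2\rfloor<d$ for $d\geq4$, so the rescaled integral $\int_0^{n\pi}u^{d-1}(1+u)^{-\kappa}\,\mathrm{d}u$ grows like $\log n$ at the borderline and like $n^{d-\kappa}$ for $d\geq 4$ --- the spurious factor you hoped to rule out really is there. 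Your inference ``$\kappa>d/2$ and hence $\kappa p>d$'' silently assumes $p\geq2$, but the lemma is asserted for all $1\leq p\leq\infty$; the localization bound of Lemma~\ref{Lemma:Localization} alone proves it only for $p>d/\kappa$ (roughly $p>2d/(d+2)$), and in particular not for $p=1$. This is not an artifact of the method: with the crude decay $|K_n(\cos\theta)|\leq cn^d(1+n\theta)^{-\kappa}$, the $L_1$ bound genuinely requires $\kappa>d$ (the regime $\kappa\geq d+1$ of \cite{NaPeWa2006}), whereas obtaining it at the low smoothness $\kappa\geq\lfloor(d+3)/2\rfloor$ needs the finer kernel estimates of \cite{WaLeSlWo2017}.

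The paper's route is correspondingly different: it takes the $p=1$ case as the hard input, citing \cite{WaLeSlWo2017}, and then upgrades to $p>1$ in one line using that $K_n(\bx\cdot\,\cdot)\in\Pi_{2n}^d$ together with the Nikolski\^{\i} inequality (Lemma~\ref{Lemma:Nikolskii}), which gives $\|K_n(\bx\cdot\,\cdot)\|_{L_p(\mathbb S^d)}\leq \tilde{C}_1(2n)^{d(1-1/p)}\|K_n(\bx\cdot\,\cdot)\|_{L_1(\mathbb S^d)}$. On the positive side, your Parseval-plus-addition-theorem computation $\|K_n(\bx\cdot\,\cdot)\|_{L_2(\mathbb S^d)}^2\asymp n^d$ and the interpolation against the trivial $L_\infty$ bound $cn^d$ are correct, essentially smoothness-free, and cover every use the paper actually makes of this lemma (only $p=2$ appears in \eqref{bound S.1} and \eqref{bound S}). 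But as a proof of the lemma as stated, the range $1\leq p\leq d/\kappa$ remains unproven; to close it you would either have to cite the $L_1$ estimate of \cite{WaLeSlWo2017} as the paper does, or strengthen the hypothesis to $\kappa\geq d+1$.
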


The above lemma for $p=1$ was proved by \cite{WaLeSlWo2017} (see
also \cite{NaPeWa2006} for $\kappa\geq d+1$).  {The case $p> 1$} can
be  {obtained from} the case $p=1$  {with the fact that}
$K_n\in\Pi_{2n}^d$ and the Nikolski\^{\i} inequality for spherical
polynomials \cite{MhNaWa1999}.

\begin{proof}[Proof of Theorem~\ref{Theorem:learning rate fixed design}]
Define
\begin{equation}\label{def.semi 1}
            f^{\diamond,*}_{D,n}(x):=\sum_{i=1}^{|D|}w_{i,s,D}f^*(\mathbf x_i)
            K_n(\mathbf x_i\cdot \mathbf x).
\end{equation}
As $\mathbf E\{\epsilon_i\}=0$ for any $i=1,\dots,|D|$,
\begin{align*}
           \mathbf E\left\{f^\diamond_{D,n}(x)\right\}
           &=
           \mathbf E\left\{\sum_{i=1}^mw_{i,s,D}y_iK_n(\mathbf x_i\cdot \mathbf x)\right\}
            =
           \mathbf E\left\{\sum_{i=1}^mw_{i,s,D}(f^*(\mathbf x_i)+\epsilon_i)K_n(\mathbf x_i\cdot \mathbf x)\right\}\\
           &=
           \sum_{i=1}^mw_{i,s,D}f^*(\mathbf x_i)K_n(\mathbf x_i\cdot \mathbf x)
           +\sum_{i=1}^mw_{i,s,D}\mathbf E\{\epsilon_i\}K_n(\mathbf x_i\cdot \mathbf x)
           =
           f^{\diamond,*}_{D,n}(x),
\end{align*}
then,
\begin{equation}\label{semipopulation 1}
           \mathbf E\left\{f^{\diamond,*}_{D,n}(x)-f^\diamond_{D,n}(x) \right\}=0.
\end{equation}
This implies
\begin{align}\label{decomposition 1.1}
   &\mathbf E\left\{\|f^\diamond_{D,n}-f^*\|^2_{L_2(\mathbb S^d)} \right\}\\
     &\quad=
     \int_{\mathbb S^d} \mathbf E\{(f^*(x)-f^\diamond_{D,n}(x))^2
     \}\mathrm{d}\omega(\mathbf x)   \nonumber\\
    &\quad =
     \int_{\mathbb S^d} \mathbf E\{(f^*(x)-f^{\diamond,*}_{D,n}(x)
     +f^{\diamond,*}_{D,n}(x)-f^\diamond_{D,n}(x))^2
     \}\mathrm{d}\omega(\mathbf x)  \nonumber\\
    &\quad= \int_{\mathbb S^d}  (f^{\diamond,*}_{D,n}(x)-f^*(x))^2 \mathrm{d}\omega(\mathbf x)
    + \int_{\mathbb S^d} \mathbf
    E\{(f^{\diamond,*}_{D,n}(x)-f^\diamond_{D,n}(x))^2 \}\mathrm{d}\omega(\mathbf x)
         \nonumber\\
         & \quad:=
         \mathcal{A}^\diamond_{D,n} + \mathcal{S}^\diamond_{D,n}.\nonumber
\end{align}
Lemma~\ref{Lemma:Filtered h app} gives
\begin{equation}\label{bound A.1}
    \mathcal A^\diamond_{D,n}\leq
    c_5^2 \:n^{-2r}\|f^*\|^2_{\mathbb W_2^r(\mathbb
     S^d)}.
\end{equation}
To bound $\mathcal S^\diamond_{D,n}$, we observe from \eqref{eq:noisydats} that
\begin{align*}
         \mathbf
         E\left\{(f^{\diamond,*}_{D,n}(\mathbf x)-f^\diamond_{D,n}(\mathbf x))^2 \right\}
           &=
          \mathbf E\left\{\left(\sum_{i=1}^{|D|}(y_i-f^*(\mathbf x_i))w_{i,s,D}
          K_n(\mathbf x_i\cdot \mathbf x)\right)^2 \right\}\\
          &=
          \mathbf E\left\{\left(\sum_{i=1}^{|D|}\epsilon_iw_{i,s,D}
          K_n(\mathbf x_i\cdot \mathbf x)\right)^2 \right\}\\
          &\leq M^2 \sum_{i=1}^{|D|}w_{i,s,D}^2|K_n(\mathbf x_i\cdot \mathbf x)|^2,
\end{align*}
where the last inequality uses the independence of $\epsilon_1,\dots,\epsilon_{|D|}$.
This together with Lemmas \ref{Lemma:bound for filtered norm} and \ref{Lemma:fixed cubature} shows
\begin{align}\label{bound S.1}
        \mathcal S^\diamond_{D,n}
         &\leq
         M^2 \int_{\mathbb S^d} \sum_{i=1}^{|D|} w_{i,s,D}^2|
         K_n(\mathbf x_i\cdot \mathbf x)|^2 \mathrm{d}\omega(\mathbf x) \nonumber\\
          &=
          M^2 \sum_{i=1}^{|D|} w_{i,s,D}^2\int_{\mathbb S^d}|K_n(\mathbf x_i\cdot \mathbf x)|^2 \mathrm{d}\omega(\mathbf x)
           \leq
          c_1M^2n^d \sum_{i=1}^{|D|} w_{i,s,D}^2 \leq
          \frac{c_1c_2^2M^2n^d}{|D|}.
\end{align}
Putting \eqref{bound S.1} and \eqref{bound A.1} to
\eqref{decomposition 1.1}, we obtain
\begin{equation}\label{IMportant.1}
     \mathbf E\left\{\|f^\diamond_{D,n}-f^*\|^2_{L_2(\mathbb S^d)}\right\}
     \leq
     c_5^2n^{-2r}\|f^*\|^2_{\mathbb W_2^r(\mathbb
     S^d)}+ \frac{c_1c_2^2M^2n^d}{|D|},
\end{equation}
with $\frac{c_3}6 |D|^{\frac1{2r+d}}\leq n\leq \frac{c_3}3 |D|^{\frac1{2r+d}}$, then,
\begin{equation*}
	\mathbf E\left\{\|f^\diamond_{D,n}-f^*\|^2_{L_2(\mathbb S^d)}\right\}\leq C_1|D|^{-\frac{2r}{2r+d}}
\end{equation*}
with $C_1:= 36^r c_5^2c_3^{-2r} \|f^*\|^2_{\mathbb W_2^r(\mathbb S^d)} + 3^{-d}c_1 c_2^2 c_3^d M^2$, thus completing the proof.
\end{proof}

To prove Theorem~\ref{Theorem:distributed learning rate fixed
design}, we need the following lemma, which is a modified version of
\cite[Proposition 4]{GuLiZh2017}.
\begin{lemma}\label{Lemma:distributed.1}
 For $\overline{f^\diamond_{D,n}}$ in Definition~\ref{defn:distrfih}, there holds
\begin{align}
        &\mathbf E\left\{\bigl\|\overline{f^\diamond_{D,n}}-f^*\bigr\|_{L_2(\mathbb S^d)}^2\right\}\notag\\
        &\quad\leq
         \sum_{j=1}^m\frac{|D_j|^2}{|D|^2}\mathbf E\left\{\|f^\diamond_{D_j,n}
         -f^*\|_{L_2(\mathbb
         S^d)}^2\right\}
        +\sum_{j=1}^m\frac{|D_j|}{|D|}
        \left\| f^{\diamond,*}_{D_j,n}-f^*\right\|_{L_2(\mathbb
        S^d)}^2,\label{eq:distributed.bound}
\end{align}
where $f^{\diamond,*}_{D_j,n}$ is given by \eqref{def.semi 1}.
\end{lemma}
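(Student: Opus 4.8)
The plan is to prove the bias–variance style decomposition in Lemma~\ref{Lemma:distributed.1} by reducing the distributed estimator to its per-machine pieces and exploiting the fact that the ``noise-free'' estimator $f^{\diamond,*}_{D_j,n}$ is the conditional expectation of $f^\diamond_{D_j,n}$ (given the sampling points), exactly as in \eqref{semipopulation 1}. First I would write $\overline{f^\diamond_{D,n}} = \sum_{j=1}^m \frac{|D_j|}{|D|} f^\diamond_{D_j,n}$ and introduce the synthesized noise-free version $\overline{f^{\diamond,*}_{D,n}} := \sum_{j=1}^m \frac{|D_j|}{|D|} f^{\diamond,*}_{D_j,n}$. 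Since the weights $\frac{|D_j|}{|D|}$ sum to $1$, I would add and subtract $\overline{f^{\diamond,*}_{D,n}}$ and write
\begin{equation*}
    \overline{f^\diamond_{D,n}} - f^* = \left(\overline{f^\diamond_{D,n}} - \overline{f^{\diamond,*}_{D,n}}\right) + \left(\overline{f^{\diamond,*}_{D,n}} - f^*\right).
\end{equation*}
The first bracket is a zero-mean fluctuation driven entirely by the noise $\{\epsilon_i\}$, while the second bracket is deterministic given the points. Taking the $L_2(\sph{d})$ norm squared and then expectation, the cross term vanishes because $\mathbf E\{f^\diamond_{D_j,n} - f^{\diamond,*}_{D_j,n}\} = 0$ by \eqref{semipopulation 1}, so I obtain
\begin{equation*}
    \mathbf E\left\{\bigl\|\overline{f^\diamond_{D,n}}-f^*\bigr\|_{L_2(\sph{d})}^2\right\}
    = \mathbf E\left\{\bigl\|\overline{f^\diamond_{D,n}}-\overline{f^{\diamond,*}_{D,n}}\bigr\|_{L_2(\sph{d})}^2\right\}
    + \bigl\|\overline{f^{\diamond,*}_{D,n}}-f^*\bigr\|_{L_2(\sph{d})}^2.
\end{equation*}

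Next I would bound each of the two terms on the right separately. For the deterministic (approximation) term, because $\sum_j \frac{|D_j|}{|D|}=1$, I would write $\overline{f^{\diamond,*}_{D,n}} - f^* = \sum_j \frac{|D_j|}{|D|}\bigl(f^{\diamond,*}_{D_j,n} - f^*\bigr)$ and apply Jensen's inequality (convexity of $\|\cdot\|_{L_2}^2$ against the probability weights $\frac{|D_j|}{|D|}$) to get
\begin{equation*}
    \bigl\|\overline{f^{\diamond,*}_{D,n}}-f^*\bigr\|_{L_2(\sph{d})}^2
    \leq \sum_{j=1}^m \frac{|D_j|}{|D|}\bigl\|f^{\diamond,*}_{D_j,n}-f^*\bigr\|_{L_2(\sph{d})}^2,
\end{equation*}
which is exactly the second sum in \eqref{eq:distributed.bound}. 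For the stochastic (variance) term, I would use the fact that the noise across different machines is independent and mean-zero: the fluctuations $f^\diamond_{D_j,n} - f^{\diamond,*}_{D_j,n}$ for distinct $j$ depend on disjoint blocks of noise variables $\{\epsilon_i : i\in D_j\}$, so their cross expectations vanish. Expanding the square of the weighted sum and discarding the vanishing cross terms leaves only the diagonal,
\begin{equation*}
    \mathbf E\left\{\bigl\|\overline{f^\diamond_{D,n}}-\overline{f^{\diamond,*}_{D,n}}\bigr\|_{L_2(\sph{d})}^2\right\}
    = \sum_{j=1}^m \frac{|D_j|^2}{|D|^2}\,\mathbf E\left\{\bigl\|f^\diamond_{D_j,n}-f^{\diamond,*}_{D_j,n}\bigr\|_{L_2(\sph{d})}^2\right\},
\end{equation*}
and finally I would bound each summand by $\mathbf E\{\|f^\diamond_{D_j,n}-f^*\|_{L_2(\sph{d})}^2\}$ using the single-machine decomposition \eqref{decomposition 1.1}, which shows the sample-noise term is dominated by the full error. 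This yields the first sum in \eqref{eq:distributed.bound} and completes the proof.

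The step I expect to require the most care is the vanishing of the cross terms in the stochastic part. The subtlety is that the $L_2(\sph{d})$ inner product of two machine fluctuations is itself a random quantity, so I must justify interchanging the spatial integral over $\sph{d}$ with the expectation over the noise (Fubini, legitimate since everything is bounded by the uniform noise bound $M$ and the kernel is bounded) and only then use $\mathbf E\{\epsilon_i \epsilon_{i'}\}=0$ for $i\in D_j$, $i'\in D_{j'}$ with $j\neq j'$, which holds because the $\epsilon_i$ are independent and mean-zero and the machines use disjoint data. Once the integral and expectation are exchanged, the off-diagonal terms factor into a product of two independent zero-mean expectations and vanish. The remaining bounds are routine applications of Jensen's inequality and the already-established single-machine estimate.
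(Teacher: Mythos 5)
Your proposal is correct and follows essentially the same route as the paper: both arguments come down to the exact identity
$\mathbf E\bigl\{\|\overline{f^\diamond_{D,n}}-f^*\|_{L_2(\mathbb S^d)}^2\bigr\}
=\sum_{j=1}^m\frac{|D_j|^2}{|D|^2}\,\mathbf E\bigl\{\|f^\diamond_{D_j,n}-f^{\diamond,*}_{D_j,n}\|_{L_2(\mathbb S^d)}^2\bigr\}
+\bigl\|\sum_{j=1}^m\frac{|D_j|}{|D|}\bigl(f^{\diamond,*}_{D_j,n}-f^*\bigr)\bigr\|_{L_2(\mathbb S^d)}^2$,
derived from \eqref{semipopulation 1} together with the independence of the noise across machines, and then conclude by Jensen's inequality on the bias term and by enlarging the variance term via the single-machine decomposition \eqref{decomposition 1.1}. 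The only difference is organizational: the paper expands the weighted sum of $f^\diamond_{D_j,n}-f^*$ directly and reorganizes the cross terms afterward, whereas you split into bias and variance at the outset; the dropped nonnegative quantity $\sum_{j}\frac{|D_j|^2}{|D|^2}\|f^{\diamond,*}_{D_j,n}-f^*\|_{L_2(\mathbb S^d)}^2$ is the same in both versions.
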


\begin{proof} Due to \eqref{distrilearn 1} and
$\sum_{j=1}^m\frac{|D_j|}{|D|}=1$, we have
\begin{align*}
       &\bigl\|\overline{f^\diamond_{D,n}}-f^*\bigr\|_{L_2(\mathbb S^d)}^2
        =
       \left\|\sum_{j=1}^m\frac{|D_j|}{|D|}(f^\diamond_{D_j,n}-f^*)\right\|_{L_2(\mathbb S^d)}^2\\
       &\hspace{-2mm}=
       \sum_{j=1}^m\frac{|D_j|^2}{|D|^2}\|f^\diamond_{D_j,n}-f^*\|_{L_2(\mathbb S^d)}^2
       +
       \sum_{j=1}^m\frac{|D_j|}{|D|}\left\langle
       f^\diamond_{D_j,n}-f^*,\sum_{k\neq
       j}\frac{|D_k|}{|D|}(f^\diamond_{D_k,n}-f^*)\right\rangle_{L_2(\mathbb S^d)}.
\end{align*}
Taking expectations gives
\begin{align*}
        &\mathbf E\left\{\bigl\|\overline{f^\diamond_{D,n}}-f^*\bigr\|_{L_2(\mathbb
        S^d)}^2\right\}
        =
        \sum_{j=1}^m\frac{|D_j|^2}{|D|^2}\mathbf E\left\{\|f^\diamond_{D_j,n}
        -f^*\|_{L_2(\mathbb S^d)}^2\right\}\\
        &\quad+
        \sum_{j=1}^m\frac{|D_j|}{|D|}\left\langle
       \mathbf E_{D_j}\bigl\{f^\diamond_{D_j,n}\bigr\}-f^*,\mathbf E\left\{\overline{f^\diamond_{D,n}}\right\}-f^*- \frac{|D_j|}{|D|}
       \left(\mathbf E_{D_j}\{f^\diamond_{D_j,n}\}-f^*\right)\right\rangle_{L_2(\mathbb S^d)},
\end{align*}
where
\begin{align*}
       &\sum_{j=1}^m\frac{|D_j|}{|D|}\left\langle
       \mathbf E_{D_j}\{f^\diamond_{D_j,n}\}-f^*,\mathbf E\left\{\overline{f^\diamond_{D,n}}\right\}-f^*\right\rangle_{L_2(\mathbb S^d)}\\
      	&\qquad=
       \mathbf E\left\{\left\langle
        \overline{f^\diamond_{D,n}}-f^*,\mathbf E\left\{\overline{f^\diamond_{D,n}}\right\}-f^*\right\rangle_{L_2(\mathbb S^d)}\right\}
        =
        \left\|\mathbf E\left\{\overline{f^\diamond_{D,n}}\right\}-f^*\right\|_{L_2(\mathbb S^d)}^2.
\end{align*}
Then,
\begin{align*}
        &\mathbf
        E\left\{\bigl\|\overline{f^\diamond_{D,n}}-f^*\bigr\|_{L_2(\mathbb
        S^d)}^2\right\}
          =
         \sum_{j=1}^m\frac{|D_j|^2}{|D|^2}\mathbf E\left\{\|f^\diamond_{D_j,n}-f^*\|_{L_2(\mathbb
         S^d)}^2\right\}\\
         &\qquad-\sum_{j=1}^m\frac{|D_j|^2}{|D|^2}\left\|\mathbf E\{f^\diamond_{D_j,n}\}-f^*\right\|_{L_2(\mathbb S^d)}^2 +
         \left\|\mathbf E\left\{\overline{f^\diamond_{D,n}}\right\}-f^*\right\|_{L_2(\mathbb S^d)}^2.
\end{align*}
By \eqref{semipopulation 1},
$$
      \mathbf
      E\left\{\overline{f^\diamond_{D,n}}\right\}=\sum_{j=1}^m\frac{|D_j|}{|D|}f^{\diamond,*}_{D_j,n}.
$$
This plus
$\sum_{j=1}^m\frac{|D_j|}{|D|}=1$ gives
\begin{align*}
    \left\|\mathbf E\left\{\overline{f^\diamond_{D,n}}\right\}-f^*\right\|^2_{L_2(\mathbb S^d)}
    &=
       \left\|\sum_{j=1}^m\frac{|D_j|}{|D|}\left(f^{\diamond,*}_{D_j,n}-f^*\right)\right\|_{L_2(\mathbb S^d)}^2\\[1mm]
    &\leq
       \sum_{j=1}^m\frac{|D_j|}{|D|}\bigl\|f^{\diamond,*}_{D_j,n}-f^*\bigr\|_{L_2(\mathbb S^d)}^2,
\end{align*}
thus proving the bound in \eqref{eq:distributed.bound}.
\end{proof}

\begin{proof}[Proof of Theorem~\ref{Theorem:distributed learning rate fixed
design}]
By Lemma~\ref{Lemma:distributed.1}, we
only need to estimate the bounds of $\mathbf
E\left\{\|f^\diamond_{D_j,n}-f^*\|_{L_2(\mathbb
         S^d)}^2\right\}$ and $\bigl\|f^{\diamond,*}_{D_j,n}-f^*\bigr\|_{L_2(\mathbb
         S^d)}^2$.
Since $\min_{j=1,\dots,m}|D_j|\geq |D|^{d/(2r+d)}$ and $D_j$ is
$\tau$-quasi uniform, it follows from Lemma~\ref{Lemma:fixed
cubature} that there exists a quadrature rule for each local
machine which is exact for polynomials of degree $3n-1$ for $n\leq \frac{c_3}3|D|^{1/(2r+d)}$.
From \eqref{IMportant.1} with $D=D_j$, for $j=1,\dots,m$,
$$
     \mathbf E\left\{\|f^\diamond_{D_j,n}-f^*\|^2_{L_2(\mathbb S^d)}\right\}
     \leq
     c_5^2n^{-2r}\|f^*\|^2_{\mathbb W_2^r(\mathbb
     S^d)}+ \frac{c_1c_2^2M^2n^d}{|D_j|}.
$$
This together with $\sum_{i=1}^m\frac{|D_j|}{|D|}=1$ gives
\begin{align}\label{bound first term.1}
     &\sum_{j=1}^m\frac{|D_j|^2}{|D|^2}\mathbf E\left\{\|f^\diamond_{D_j,n}-f^*\|_{L_2(\mathbb
         S^d)}^2\right\}  \nonumber\\
         &\quad\leq 36^r c_5^2c_3^{-2r}\|f^*\|^2_{\mathbb W_2^r(\mathbb S^d)}|D|^{-\frac{2r}{2r+d}}+ 3^{-d}c_1 c_2^2 c_3^d M^2
        \sum_{j=1}^m\frac{|D_j|^2}{|D|^2}\frac{|D|^{\frac{d}{2r+d}}}{|D_j|}
        = C_1|D|^{-\frac{2r}{2r+d}},
\end{align}
where $C_1:= 36^rc_5^2c_3^{-2r}\|f^*\|^2_{\mathbb W_2^r(\mathbb
     S^d)} + 3^{-d}c_1c_2^2c_3^d M^2$. 
 
 For each $j=1,\dots,m$, $D_j$ is $\tau$-quasi uniform. Lemma~\ref{Lemma:fixed cubature} implies that there exists a quadrature rule with nodes of $D_j$ and $|D_j|$ positive weights such that $f^*_{D_j,n}$ is a filtered hyperinterpolation for the noise-free data set $\{\mathbf x_i,f^*(\mathbf x_i)\}_{\mathbf x_i\in D_j}$. Lemma~\ref{Lemma:Filtered h app} then gives
\begin{equation*}
	\left\| f^{\diamond,*}_{D_j,n} -f^*\right\|_{L_2(\mathbb
         S^d)}^2
     \leq c_5^2n^{-2r}\|f^*\|^2_{\mathbb W_2^r(\mathbb
     S^d)} \quad \forall j=1,2,\dots,m.
\end{equation*}
This together with
$\sum_{j=1}^m\frac{|D_j|}{|D|}=1$ and $\frac{c_3}6 |D|^{\frac{1}{2r+d}}\leq n\leq \frac{c_3}3|D|^{\frac{1}{2r+d}}$ gives
\begin{equation}\label{bound second term.1}
     \sum_{j=1}^m\frac{|D_j|}{|D|}\left\| f^{\diamond,*}_{D_j,n}
     -f^*\right\|_{L_2(\mathbb S^d)}^2
     \leq
     36^r c_5^2c_3^{-2r}\|f^*\|^2_{\mathbb W_2^r(\mathbb
     S^d)}|D|^{-\frac{2r}{2r+d}}.
\end{equation}
Using \eqref{bound first term.1} and \eqref{bound second term.1}
in Lemma~\ref{Lemma:distributed.1},
\begin{equation*}
	\mathbf E\left\{\|\overline{f^\diamond_{D,n}}-f^*\|_{L_2(\mathbb S^d)}^2\right\}
     \leq C_2|D|^{-\frac{2r}{2r+d}}.
\end{equation*}
This then proves \eqref{error 1.1} with
$C_2 = 2^{2r+1}\cdot 9^{r}c_5^2c_3^{-2r}\|f^*\|^2_{\mathbb W_2^r(\mathbb S^d)} + 3^{-d}c_1 c_2^2 c_3^d M^2$.
\end{proof}

\subsection{Proofs for the theorems in Section~\ref{Sec.Random}}
\begin{proof}[Proof of Theorem~\ref{Theorem:learning rate fih random samp}]
Let $\{a_{i,n,D}\}_{i=1}^{|D|}$ be the real numbers computed in \eqref{eq:fihransamp}. Since $\{\mathbf x_i\}_{i=1}^{|D|}$ is a set of random points on $\sph{d}$, we define four events, as follows. Let $\Omega_{D}$ be the event such that
     $\sum_{i=1}^{|D|}|a_{i,n,D}|^2\leq\frac{2}{|D|}$ and $\Omega_{D}^c$ be
the complement of $\Omega_D$, i.e. $\Omega_{D}^c$ be the event
$\sum_{i=1}^{|D|}|a_{i,n,D}|^2>\frac{2}{|D|}$. Let $\Xi_D$
the event that $\{(a_{i,n,D},\bx_i)\}_{i=1}^{|D|}$ is a
quadrature rule exact for polynomials in $\Pi_n^d$ and $\Xi_D^c$   the complement event of $\Xi_D$. Then, by Theorem~\ref{Theorem:random cubature},
\begin{equation}\label{Probability for events}
     \mathbf P\{\Omega_{D}^c\}\leq\mathbf P\{\Xi_D^c\}\leq
     4\exp\left\{-C|D|/n^d\right\}.
\end{equation}
We write
\begin{align}
      &\mathbf E\left\{\|f_{D,n}-f^*\|^2_{L_2(\mathbb S^d)}\right\}\label{expectation decomposition}\\
      &\quad=\mathbf E\left\{\|f_{D,n}-f^*\|^2_{L_2(\mathbb S^{d})}|\Omega_{D}\right\}\mathbf P\{\Omega_{D}\}
      +\mathbf E\left\{\|f_{D,n}-f^*\|^2_{L_2(\mathbb S^{d})}|\Omega_{D}^c\right\}\mathbf P\{\Omega_{D}^c\}.\notag 
\end{align}
Under the event $\Omega_{D}^c$, we have from \eqref{eq:fihransamp} and \eqref{SFH for noisy data} that
$f_{D,n}(x)=0$. Then, by \eqref{Probability for events},
\begin{equation}\label{bound without event}
     \mathbf E\left\{\|f_{D,n}-f^*\|^2_{L_2(\mathbb S^d)}|\Omega_D^c\right\}\mathbf P\{\Omega_{D}^c\}
      \leq 4\|f^*\|^2_{L_\infty(\mathbb S^d)}\exp\bigl\{-C|D|/n^d\bigr\}.
\end{equation}
Now, we estimate the first term of the RHS of \eqref{expectation decomposition} when the event $\Omega_{D}$ takes place. Under this circumstance, we let
\begin{equation}\label{def.semi}
    f^*_{D,n}(\bx):=\sum_{i=1}^{|D|}a_{i,n,D}f^*(\mathbf x_i)
            K_n(\mathbf x_i\cdot \mathbf x).
\end{equation}
Let $\Lambda_D:=\{\mathbf x_i\}_{i=1}^{|D|}$. By the independence between $\{\epsilon_i\}_{i=1}^{|D|}$ and $
\Lambda_D$ and $\mathbf E\{\epsilon_i\}=0$,
$i=1,\dots, |D|$, we obtain
\begin{align*}
    \mathbf E\left\{f_{D,n}(\mathbf x)\big|\Lambda_{D}\right\}
     &= \mathbf E\left\{\sum_{i=1}^ma_{i,n,D}y_i K_n(\mathbf x_i\cdot \mathbf x)\big|\Lambda_{D}\right\}\\
     &= \mathbf E\left\{\sum_{i=1}^ma_{i,n,D}(f^*(\mathbf x_i)+\epsilon_i)K_n(\mathbf x_i\cdot \mathbf x)\big|\Lambda_{D}\right\}\\
     &= \sum_{i=1}^ma_{i,n,D}f^*(\mathbf x_i)K_n(\mathbf x_i\cdot \mathbf x)+\sum_{i=1}^ma_{i,n,D}\mathbf E\{\epsilon_i\}K_n(\mathbf x_i\cdot \mathbf x)\\[1mm]
     &= f^*_{D,n}(\mathbf x).
\end{align*}
Hence,
\begin{equation}\label{condition expectation}
     \mathbf E\left\{\left(f^*_{D,n}(\mathbf x)-f_{D,n}(\mathbf x)\right)\big|\Lambda_{D}\right\}=0.
\end{equation}
This allows us to write
\begin{align}\label{decomposition 1}
   &\mathbf E\left\{\|f_{D,n}-f^*\|^2_{L_2(\mathbb S^d)}\big|\Omega_{D}\right\}
     =
    \mathbf E\left\{\int_{\mathbb S^d}
     \mathbf E\{(f^*(\mathbf x)-f_{D,n}(\mathbf x))^2
    \big|\Lambda_{D}\}\mathrm{d}\omega(\mathbf x)\big|\Omega_{D}\right\} \nonumber\\
    &\quad=
    \mathbf E\left\{\int_{\mathbb S^d} \mathbf E\{(f^*(\mathbf x)-f^*_{D,n}(\mathbf x)
    +f^*_{D,n}(\mathbf x)-f_{D,n}(\mathbf x))^2
    \big|\Lambda_{D}\}\mathrm{d}\omega(\mathbf x)\big|\Omega_{D}\right\} \nonumber\\
    &\quad=
    \mathbf E\left\{ \int_{\mathbb S^d} \mathbf
    E\{(f^*_{D,n}(\mathbf x)-f_{D,n}(\mathbf x))^2\big|\Lambda_{D}\}\mathrm{d}\omega(\mathbf x)
    \big|\Omega_{D}\right\} \nonumber \\
        &\qquad+
         \mathbf E\left\{ \int_{\mathbb S^d} \mathbf
         E\{(f^*_{D,n}(\mathbf x)-f^*(\mathbf x))^2
         \big|\Lambda_{D}\}\mathrm{d}\omega(\mathbf x)\big|\Omega_{D}\right\}\nonumber\\
         &\quad :=
         \mathcal S_{D,n}+\mathcal A_{D,n}.
\end{align}
Given $\Lambda_{D}$, if the event  $\Omega_{D}$ occurs, by $|\epsilon_i|\leq M$,
\begin{align*}
          \mathbf
         E\left\{(f^*_{D,n}(\mathbf x)-f_{D,n}(\mathbf x))^2\big|\Lambda_{D}\right\}
         & =
          \mathbf E\left\{\left(\sum_{i=1}^{|D|}
          \epsilon_i a_{i,n,D}K_n(\mathbf x_i\cdot \mathbf x)\right)^2
          \bigg|\Lambda_{D}\right\}\\
          &\leq
           M^2  \sum_{i=1}^{|D|}a_{i,n,D}^2 |K_n(\mathbf x_i\cdot \mathbf x)|^2,
\end{align*}
where the second line uses the independence of $\epsilon_1,
\dots,\epsilon_{|D|}$.
This with Lemma~\ref{Lemma:bound for filtered norm} shows
\begin{align}\label{bound S}
        \mathcal S_{D,n}
         &\leq
         M^2\mathbf E\left\{ \int_{\mathbb S^d}  \sum_{i=1}^{|D|}
         a_{i,n,D}^2|K_n(\mathbf x_i\cdot \mathbf x)|^2
         \mathrm{d}\omega(\mathbf x) \big|\Omega_D\right\}\nonumber\\
          &=
          M^2\mathbf E\left\{\sum_{i=1}^{|D|} a_{i,n,D}^2\int_{\mathbb S^d}
          |K_n(\mathbf x_i\cdot \mathbf x)|^2 \mathrm{d}\omega(\mathbf x)\big|\Omega_D\right\} \nonumber \\
          &\leq
          c_1^2 M^2n^d\mathbf E\left\{\sum_{i=1}^{|D|} a_{i,n,D}^2\right\}\leq
         \frac{2c_1^2M^2n^d}{|D|}.
\end{align}

We now turn to bound $\mathcal A_{D,n}$. We split $\mathcal A_{D,n}$ as
\begin{align}\label{Decomposition A}
      \mathcal A_{D,n}
      &=
      \mathbf E\left\{\int_{\mathbb S^d} \mathbf E\left\{(f^*(\mathbf x)-f^*_{D,n}(\mathbf x))^2\big|\Lambda_{D}\right\}\mathrm{d}\omega(\mathbf x)
      \big|\Xi_D,\Omega_{D}\right\}\mathbf P\{\Xi_D\}\nonumber\\
    &\qquad+
     \mathbf E\left\{\int_{\mathbb S^d} \mathbf E\left\{(f^*(\mathbf x)-f^*_{D,n}(\mathbf x))^2\big|\Lambda_{D}\right\}\mathrm{d}\omega(\mathbf x)
     \big|\Xi_D^c,\Omega_{D}\right\}\mathbf P\{\Xi_D^c\} \nonumber\\
    &:=
    \mathcal A_{D,n,1}+\mathcal A_{D,n,2}.
\end{align}
To estimate $\mathcal A_{D,n,2}$ given the event $\Omega_{D}\cap\Xi_D^c$, it follows from Cauchy-Schwarz inequality that
\begin{align*}
     \bigl(f^*(\bx)-f^*_{D,n}(\bx)\bigr)^2
      &\leq 2\|f^*\|_{L_\infty(\mathbb S^d)}^2+2\left|\sum_{i=1}^{|D|}a_{i,n,D}f^*(\mathbf x_i)K_n(\mathbf x_i\cdot
       \bx)\right|^2\\
        &\leq
        2\|f^*\|_{L_\infty(\mathbb S^d)}^2+2\|f^*\|_{L_\infty(\mathbb S^d)}^2
        \sum_{i=1}^{|D|}a^2_{i,n,D}\sum_{i=1}^{|D|}|K_n(\mathbf x_i\cdot \bx)|^2,
\end{align*}
which with \eqref{Probability for events} and Lemma~\ref{Lemma:bound for filtered norm} gives
\begin{equation}\label{bound a2}
     \mathcal A_{D,n,2}\leq
     2\|f^*\|_{L_\infty(\mathbb S^d)}^2(|\sph{d}|+ 2c_1^2 n^{d})\exp\left\{-C|D|/n^d\right\}.
\end{equation}
To bound $\mathcal A_{D,n,1}$, we observe that when the event $\Omega_{D}\cap\Xi_D$ takes place, $\{a_{i,n,D}\}_{i=1}^{|D|}$ is a set of positive weights for quadrature rule $\mathcal Q_{|D|,n}$. We then obtain from Lemma~\ref{Lemma:Filtered h app} and $f^*\in \mathbb W_2^r(\mathbb S^d)$ with $r>d/2$ that
\begin{equation}\label{Bound a1}
      \mathcal A_{D,n,1}\leq c_5^2 n^{-2r}\|f\|^2_{\mathbb W_2^r(\mathbb S^d)}.
\end{equation}
 By
\eqref{Bound a1}, \eqref{bound a2} and \eqref{Decomposition A}, we obtain
\begin{equation}\label{bound a}
      \mathcal A_{D,n}\leq
      c_5^2n^{-2r}\|f^*\|^2_{\mathbb W_2^r(\mathbb
     S^d)}+ 2\|f^*\|_{L_\infty(\mathbb S^d)}^2(|\sph{d}|+ 2c_1^2 n^{d})\exp\{-C|D|/n^d\}.
\end{equation}
This and \eqref{bound S} and \eqref{decomposition 1} give
\begin{align*}%\label{eq:fDnerr}
      &\mathbf E\left\{\|f_{D,n}-f^*\|^2_{L_2(\mathbb S^d)}\big|\Omega_{D}\right\}\\
      &\;\leq
        c_5^2n^{-2r}\|f\|^2_{\mathbb W_2^r(\mathbb
     S^d)}+2\|f^*\|_{L_\infty(\mathbb S^d)}^2(|\sph{d}|+ 2c_1^2 n^{d})\exp\{-C|D|/n^d\}+\frac{2c_1^2M^2n^d}{|D|}.\notag
\end{align*}
Putting the above estimate and \eqref{bound without event} into
\eqref{expectation decomposition}, we obtain
\begin{align}
    \mathbf E\left\{\|f_{D,n}-f^*\|^2_{L_2(\mathbb S^d)}\right\}
    &\leq
     c_5^2n^{-2r}\|f\|^2_{\mathbb W_2^r(\mathbb
     S^d)}+\frac{2c_1^2M^2n^d}{|D|}\notag\\
    &\qquad + 2\|f^*\|_{L_\infty(\mathbb S^d)}^2(|\sph{d}|+ 2c_1^2 n^{d}+2)\exp\bigl\{-C|D|/n^d\bigr\}.\label{IMportant}
\end{align}
Taking account of $\frac{c_3}6|D|^{\frac1{2r+d}}\leq n\leq \frac{c_3}3|D|^{\frac1{2r+d}}$ and $r>d/2$, we then have
\begin{equation*}
	n^{d}\exp\left\{-C|D|/n^d\right\}
	\leq \left(\frac{c_3}{3}\right)^d|D|^{\frac{d}{2r+d}}\exp\left\{-C|D|^{\frac{2r}{2r+d}}\right\}\leq\tilde{C}_5|D|^{-\frac{2r}{2r+d}},
\end{equation*}
where $\tilde{C}_5$ is a constant independent of $|D|$. Thus,
\begin{equation*}
	\mathbf E\left\{\|f_{D,n}-f^*\|^2_{L_2(\mathbb S^d)}\right\}\leq C_3|D|^{-\frac{2r}{2r+d}}
\end{equation*}
with $C_3$ a constant independent of $|D|$, thus completing
the proof.
\end{proof}

To prove Theorem~\ref{Theorem:distributed learning rate}, we need
the following lemma, which can be obtained by a similar proof
as Lemma~\ref{Lemma:distributed.1}.
\begin{lemma}\label{Lemma:distributed}
 For $\overline{f}_{D,n}$ in Definition~\ref{defn:distrfih.ransamp}, there holds
\begin{align*}
    \mathbf E\left\{\|\overline{f}_{D,n}-f^*\|_{L_2(\mathbb S^d)}^2\right\}
    &\leq \sum_{j=1}^m\frac{|D_j|^2}{|D|^2} \mathbf E\left\{\|f_{D_j,n}-f^*\|_{L_2(\mathbb S^d)}^2\right\}\\
    &\qquad+\sum_{j=1}^m\frac{|D_j|}{|D|} \left\|\mathbf E\{f_{D_j,n}\}-f^*\right\|_{L_2(\mathbb S^d)}^2.
\end{align*}
\end{lemma}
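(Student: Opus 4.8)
The plan is to mirror the proof of Lemma~\ref{Lemma:distributed.1} almost verbatim, the only genuinely new ingredient being that each local estimator $f_{D_j,n}$ now depends on two independent sources of randomness --- the i.i.d.\ sampling points $\Lambda_{D_j}$ and the noises $\{\epsilon_i\}$ --- rather than on the noise alone. First I would invoke the defining relation $\overline{f}_{D,n}=\sum_{j=1}^m\frac{|D_j|}{|D|}f_{D_j,n}$ together with $\sum_{j=1}^m\frac{|D_j|}{|D|}=1$ to write $\overline{f}_{D,n}-f^*=\sum_{j=1}^m\frac{|D_j|}{|D|}(f_{D_j,n}-f^*)$ and expand the squared $L_2$ norm into a diagonal part and a cross part,
\begin{equation*}
   \bigl\|\overline{f}_{D,n}-f^*\bigr\|_{L_2(\mathbb S^d)}^2
   =\sum_{j=1}^m\frac{|D_j|^2}{|D|^2}\|f_{D_j,n}-f^*\|_{L_2(\mathbb S^d)}^2
   +\sum_{j=1}^m\frac{|D_j|}{|D|}\Bigl\langle f_{D_j,n}-f^*,\sum_{k\neq j}\frac{|D_k|}{|D|}(f_{D_k,n}-f^*)\Bigr\rangle_{L_2(\mathbb S^d)}.
\end{equation*}

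Next I would take the full expectation. The diagonal part yields the first sum in the claimed bound immediately. For the cross part the key observation is that, because $D=\cup_j D_j$ is a disjoint partition with i.i.d.\ sampling points and independent noises, the random functions $f_{D_j,n}$ and $f_{D_k,n}$ are independent for $j\neq k$; hence $\mathbf E\langle f_{D_j,n}-f^*,f_{D_k,n}-f^*\rangle_{L_2(\mathbb S^d)}=\langle \mathbf E\{f_{D_j,n}\}-f^*,\mathbf E\{f_{D_k,n}\}-f^*\rangle_{L_2(\mathbb S^d)}$. Writing $g_j:=\mathbf E\{f_{D_j,n}\}-f^*$ and $G:=\sum_{j}\frac{|D_j|}{|D|}g_j=\mathbf E\{\overline{f}_{D,n}\}-f^*$, the expected cross part collapses to $\|G\|_{L_2(\mathbb S^d)}^2-\sum_{j}\frac{|D_j|^2}{|D|^2}\|g_j\|_{L_2(\mathbb S^d)}^2$, exactly as in the deterministic case, giving the identity
\begin{equation*}
   \mathbf E\bigl\{\|\overline{f}_{D,n}-f^*\|_{L_2(\mathbb S^d)}^2\bigr\}
   =\sum_{j=1}^m\frac{|D_j|^2}{|D|^2}\mathbf E\bigl\{\|f_{D_j,n}-f^*\|_{L_2(\mathbb S^d)}^2\bigr\}
   +\|G\|_{L_2(\mathbb S^d)}^2-\sum_{j=1}^m\frac{|D_j|^2}{|D|^2}\|g_j\|_{L_2(\mathbb S^d)}^2.
\end{equation*}

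Finally I would discard the nonpositive last term and bound $\|G\|_{L_2(\mathbb S^d)}^2=\bigl\|\sum_j\frac{|D_j|}{|D|}g_j\bigr\|_{L_2(\mathbb S^d)}^2\leq\sum_j\frac{|D_j|}{|D|}\|g_j\|_{L_2(\mathbb S^d)}^2$ by convexity of $\|\cdot\|^2$ (Jensen's inequality with the weights $\frac{|D_j|}{|D|}$ summing to one), which recovers the second sum $\sum_j\frac{|D_j|}{|D|}\|\mathbf E\{f_{D_j,n}\}-f^*\|_{L_2(\mathbb S^d)}^2$ and completes the argument. The one point I would be most careful about --- and the only real departure from Lemma~\ref{Lemma:distributed.1} --- is justifying the factorization of the cross-term expectation: one must check that the truncation rule \eqref{eq:fihransamp} defining the weights $a_{i,n,D_j}$ involves only the data stored on machine $j$, so that each $f_{D_j,n}$ is a measurable function of the sampling points and noises belonging to $D_j$ alone. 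This is precisely what guarantees that $f_{D_j,n}$ and $f_{D_k,n}$ are independent for $j\neq k$ and hence legitimizes replacing the expectation of the inner product by the inner product of the expectations. Everything else is identical to the deterministic proof.
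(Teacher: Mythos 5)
Your proposal is correct and takes essentially the same route as the paper: the paper proves Lemma~\ref{Lemma:distributed.1} by exactly your diagonal-plus-cross decomposition, factorization of the cross-term expectation, and the convexity bound $\|\sum_j\frac{|D_j|}{|D|}g_j\|^2\leq\sum_j\frac{|D_j|}{|D|}\|g_j\|^2$ after discarding the nonpositive term, and then states that Lemma~\ref{Lemma:distributed} follows ``by a similar proof.'' Your explicit check that the truncation rule \eqref{eq:fihransamp} makes each $f_{D_j,n}$ measurable with respect to machine $j$'s data alone --- so that the cross-term expectations factor under the now two-fold randomness --- is precisely the point the paper leaves implicit, and you handle it correctly.
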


\begin{proof}[Proof of Theorem~\ref{Theorem:distributed learning
rate}] By Lemma~\ref{Lemma:distributed}, we only
need to estimate the bounds of $\mathbf E\bigl\{\|f_{D_j,n}-f^*\|_{L_2(\mathbb S^d)}^2\bigr\}$ and
         $\left\|\mathbf E\{f_{D_j,n}\}-f^*\right\|_{L_2(\mathbb
         S^d)}^2$.
To estimate the first, we obtain from \eqref{IMportant} with $D=D_j$
that for $j=1,\dots,m$,
\begin{align*}
    \mathbf E\left\{\|f_{D_j,n}-f^*\|^2_{L_2(\mathbb S^d)}\right\}
    &\leq
     c_5^2n^{-2r}\|f\|^2_{\mathbb W_2^r(\mathbb
     S^d)} + \frac{2c_1^2M^2n^d}{|D_j|}\notag\\
    &\quad + 2\|f^*\|_{L_\infty(\mathbb S^d)}^2\left(|\sph{d}|+ 2c_1^2 n^{d}+2\right)
     \exp\bigl\{-C|D_j|/n^d\bigr\}.
\end{align*}
Since $\min_{1\leq j\leq m}|D_j|\geq |D|^\frac{d+\nu}{2r+d}$, $\frac{c_3}6|D|^{\frac{1}{2r+d}}\leq n\leq \frac{c_3}3|D|^{\frac{1}{2r+d}}$, $2r>d$ and $0<\nu<2r$,
\begin{equation*}
	2\|f^*\|_{L_\infty(\mathbb S^d)}^2\left(|\sph{d}|+ 2c_1^2 n^{d}+2\right) \exp\bigl\{-C|D_j|/n^d\bigr\}\leq
      \tilde{C}_7|D|^{-\frac{2r}{2r+d}},
\end{equation*}
where $\tilde{C}_7$ is a constant depending only on $r,c_1,C,d$ and
$f^*$. Thus, there exists a constant $\tilde{C}_8$ independent of
$m, n, |D_1|,\dots,|D_m|$ and $|D|$ such that
\begin{align}\label{bound first term}
         &\sum_{j=1}^m\frac{|D_j|^2}{|D|^2}
         \mathbf E\left\{\|f_{D_j,n}-f^*\|_{L_2(\mathbb
         S^d)}^2\right\}  \nonumber\\
         &\qquad\leq
        \tilde{C}_8 \left(|D|^{-\frac{2r}{2r+d}}+
        \sum_{j=1}^m\frac{|D_j|^2}{|D|^2}\frac{|D|^{\frac{d}{2r+d}}}{|D_j|}\right)
        = (\tilde{C}_8+1)|D|^{-\frac{2r}{2r+d}},
\end{align}
where we used $\sum_{i=1}^m \frac{|D_j|}{|D|}=1$. To bound $\left\|\mathbf E\{f_{D_j,n}\}-f^*\right\|_{L_2(\mathbb S^d)}^2$, let $\Lambda_{D_j}$ be the set of points of the data set $D_j$. Then, we use \eqref{condition expectation} and Jensen's inequality to obtain
\begin{align}\label{eq:err_fDjn_f*}
    \left\|\mathbf E\{f_{D_j,n}\}-f^*\right\|_{L_2(\mathbb S^d)}^2
    &= \left\|\mathbf E\{\mathbf E\{f_{D_j,n}|\Lambda_{D_j}\}-f^*\}\right\|_{L_2(\mathbb S^d)}^2\notag\\
    &= \left\|\mathbf E\{f^*_{D_j,n}-f^*\}\right\|_{L_2(\mathbb S^d)}^2
    \leq \mathbf E\left\{\|f^*_{D_j,n}-f^* \|_{L_2(\mathbb S^d)}^2\right\}.
\end{align}
We now use a similar proof as Theorem~\ref{Theorem:learning rate fih random samp} to prove the error bound of distributed filtered hyperinterpolation $\overline{f}_{D,n}$. For each $j=1,\dots,m$, we let $\Omega_{D_j}$ be the event such that the sum of the quadrature weights $\sum_{i=1}a_{i,n,D_j}^2\leq 2/|D_{j}|$, and $\Omega_{D_j}^c$ the complement of $\Omega_{D_j}$. Write 
\begin{align}
      \mathbf E\left\{\|f^*_{D_j,n}-f^*\|^2_{L_2(\mathbb S^d)}\right\}
      &=\mathbf E\left\{\|f^*_{D_j,n}-f^*\|^2_{L_2(\mathbb S^d)}|\Omega_{D_j}\right\}\mathbf P\{\Omega_{D_j}\}\notag\\
      &\qquad+\mathbf E\left\{\|f^*_{D_j,n}-f^*\|^2_{L_2(\mathbb S^d)}|\Omega_{D_j}^c\right\}\mathbf P\{\Omega_{D_j}^c\},\label{eq:f*Dj}
\end{align}
where,
\begin{equation*}
	\mathbf E\left\{\|f^*_{D_j,n}-f^*\|^2_{L_2(\mathbb S^d)}|\Omega_{D_j}^c\right\}\mathbf P\{\Omega_{D_j}^c\}
      \leq 4\|f^*\|_{L_\infty(\mathbb S^d)}^2\exp\bigl\{-C|D_j|/n^d\bigr\}.
\end{equation*}
By \eqref{bound a} with $D=D_j$, the second term of the RHS in \eqref{eq:f*Dj} becomes
\begin{align*}
     &\mathbf E\left\{\|f^*_{D_j,n}-f^*\|^2_{L_2(\mathbb S^d)}
      |\Omega_{D_j}\right\}\mathbf P\{\Omega_{D_j}\} \\
     &\quad\leq
       c_5^2n^{-2r}\|f\|^2_{\mathbb W_2^r(\mathbb
     S^d)}+ 2\|f^*\|_{L_\infty(\mathbb S^d)}^2(|\sph{d}|+ 2c_1^2 n^{d})\exp\bigl\{-C|D_j|/n^d\bigr\}.
\end{align*}
These two estimates give
\begin{align*}
    &\mathbf E\left\{\|f^*_{D_j,n}-f^*\|^2_{L_2(\mathbb S^d)}\right\}\\
    &\quad\leq c_5^2n^{-2r}\|f\|^2_{\mathbb W_2^r(\mathbb
     S^d)} + 2\|f^*\|_{L_\infty(\mathbb S^d)}^2(|\sph{d}|
     +2c_1^2n^{d}+2)\exp\bigl\{-C|D_j|/n^d\bigr\}.
\end{align*}
By $\min_{1\leq j\leq m}|D_j|\geq |D|^\frac{d+\nu}{2r+d}$, $\frac{c_3}6 |D|^{\frac{1}{2r+d}} \leq n\leq \frac{c_3}3 |D|^{\frac{1}{2r+d}}$ and $2r>d$, $0<\nu<2r$,
\begin{equation*}
	\mathbf E\left\{\|f^*_{D_j,n}-f^*\|^2_{L_2(\mathbb S^d)}\right\}
        \leq \tilde{C}_9|D|^{-\frac{2r}{2r+d}},
\end{equation*}
which with \eqref{eq:err_fDjn_f*} and $\sum_{j=1}^m\frac{|D_j|}{|D|}=1$ gives
\begin{equation}\label{bound second term}
     \sum_{j=1}^m\frac{|D_j|}{|D|}\left\|\mathbf E\{f_{D_j,n}\}-f^*\right\|_{L_2(\mathbb S^d)}^2
     \leq \tilde{C}_{9}|D|^{-\frac{2r}{2r+d}}.
\end{equation}
Using \eqref{bound first term} and \eqref{bound second term} in
Lemma~\ref{Lemma:distributed} then gives
$$
     \mathbf E\left\{\|\overline{f}_{D,n}-f^*\|_{L_2(\mathbb S^d)}^2\right\}
     \leq
     C_4|D|^{-\frac{2r}{2r+d}},
$$
thus completing the proof.
\end{proof}

%\section*{Acknowledgments}

{\rm \bibliographystyle{siamplain}
\bibliography{dlfh}}

\begin{thebibliography}{10}

\bibitem{Planck2018I}
{\sc Y.~Akrami, F.~Arroja, M.~Ashdown, J.~Aumont, C.~Baccigalupi,
  M.~Ballardini, A.~Banday, R.~Barreiro, N.~Bartolo, S.~Basak, et~al.}, {\em
  {Planck 2018 results. I. Overview and the cosmological legacy of Planck}},
  arXiv preprint arXiv:1807.06205,  (2018).

\bibitem{BaKeMaPi2009}
{\sc P.~Baldi, G.~Kerkyacharian, D.~Marinucci, and D.~Picard}, {\em Adaptive
  density estimation for directional data using needlets}, Ann. Statist., 37
  (2009), pp.~3362--3395, \url{https://doi.org/10.1214/09-AOS682}.

\bibitem{Bauer2000}
{\sc R.~Bauer}, {\em Distribution of points on a sphere with application to
  star catalogs}, J. Guid. Control Dyn., 23 (2000), pp.~130--137.

\bibitem{BoRaVi2013}
{\sc A.~Bondarenko, D.~Radchenko, and M.~Viazovska}, {\em Optimal asymptotic
  bounds for spherical designs}, Ann. of Math. (2), 178 (2013), pp.~443--452,
  \url{https://doi.org/10.4007/annals.2013.178.2.2},
  \url{https://doi.org/10.4007/annals.2013.178.2.2}.

\bibitem{Brauchart_etal2015}
{\sc J.~Brauchart, J.~Dick, E.~Saff, I.~Sloan, Y.~Wang, and R.~Womersley}, {\em
  Covering of spheres by spherical caps and worst-case error for equal weight
  cubature in sobolev spaces}, J. Math. Anal. Appl., 431 (2015), pp.~782 --
  811.

\bibitem{Brauchart_etal2018}
{\sc J.~S. Brauchart, A.~B. Reznikov, E.~B. Saff, I.~H. Sloan, Y.~G. Wang, and
  R.~S. Womersley}, {\em Random point sets on the sphere --- €"hole radii,
  covering, and separation}, Exp. Math., 27 (2018), pp.~62--81,
  \url{https://doi.org/10.1080/10586458.2016.1226209}.

\bibitem{BrDa2005}
{\sc G.~Brown and F.~Dai}, {\em Approximation of smooth functions on compact
  two-point homogeneous spaces}, J. Funct. Anal., 220 (2005), pp.~401--423,
  \url{https://doi.org/10.1016/j.jfa.2004.10.005}.

\bibitem{ChSlWo2014}
{\sc A.~Chernih, I.~H. Sloan, and R.~S. Womersley}, {\em Wendland functions
  with increasing smoothness converge to a {G}aussian}, Adv. Comput. Math., 40
  (2014), pp.~185--200, \url{https://doi.org/10.1007/s10444-013-9304-5},
  \url{https://doi.org/10.1007/s10444-013-9304-5}.

\bibitem{CuZh2007}
{\sc F.~Cucker and D.-X. Zhou}, {\em Learning theory: an approximation theory
  viewpoint}, vol.~24 of Cambridge Monographs on Applied and Computational
  Mathematics, Cambridge University Press, Cambridge, 2007,
  \url{https://doi.org/10.1017/CBO9780511618796}.

\bibitem{DeGoSe1977}
{\sc P.~Delsarte, J.~M. Goethals, and J.~J. Seidel}, {\em Spherical codes and
  designs}, Geometriae Dedicata, 6 (1977), pp.~363--388,
  \url{https://doi.org/10.1007/bf03187604},
  \url{https://doi.org/10.1007/bf03187604}.

\bibitem{FaPaLeMa2018}
{\sc M.~Fan, D.~Paul, T.~C.~M. Lee, and T.~Matsuo}, {\em Modeling tangential
  vector fields on a sphere}, J. Am. Stat. Assoc., 0 (2018), pp.~1--12,
  \url{https://doi.org/10.1080/01621459.2017.1356322}.

\bibitem{FaPaLe2018}
{\sc M.~Fan, D.~Paul, T.~C.~M. Lee, and T.~Matsuo}, {\em A multi-resolution
  model for non-{G}aussian random fields on a sphere with application to
  ionospheric electrostatic potentials}, Ann. Appl. Stat., 12 (2018),
  pp.~459--489, \url{https://doi.org/10.1214/17-AOAS1104}.

\bibitem{FrGeSc1998}
{\sc W.~Freeden, T.~Gervens, and M.~Schreiner}, {\em Constructive approximation
  on the sphere}, Numerical Mathematics and Scientific Computation, The
  Clarendon Press, Oxford University Press, New York, 1998.

\bibitem{LeSlWoWa2019}
{\sc Q.~T.~L. Gia, I.~H. Sloan, R.~S. Womersley, and Y.~G. Wang}, {\em
  Isotropic sparse regularization for spherical harmonic representations of
  random fields on the sphere}, Appl. Comput. Harmon. Anal.,  (2019),
  \url{https://doi.org/10.1016/j.acha.2019.01.005}.

\bibitem{GrSl2013}
{\sc P.~Grabner and I.~H. Sloan}, {\em Lower bounds and separation for
  spherical designs}, in Uniform Distribution Theory and Applications,
  Mathematisches Forschungsinstitut Oberwolfach Report 49/2013, 2013,
  pp.~2887--2890.

\bibitem{GuLiZh2017}
{\sc Z.-C. Guo, S.-B. Lin, and D.-X. Zhou}, {\em Learning theory of distributed
  spectral algorithms}, Inverse Probl., 33 (2017), p.~074009,
  \url{http://stacks.iop.org/0266-5611/33/i=7/a=074009}.

\bibitem{GyKoKrWa2002}
{\sc L.~Gy\"orfi, M.~Kohler, A.~Krzy\.zak, and H.~Walk}, {\em A
  distribution-free theory of nonparametric regression}, Springer Series in
  Statistics, Springer-Verlag, New York, 2002,
  \url{https://doi.org/10.1007/b97848}.

\bibitem{HeSlWo2010}
{\sc K.~Hesse, I.~H. Sloan, and R.~S. Womersley}, {\em Numerical integration on
  the sphere}, Handbook of Geomathematics,  (2010), pp.~1185--1219.

\bibitem{HeSlWo2017}
{\sc K.~Hesse, I.~H. Sloan, and R.~S. Womersley}, {\em Radial basis function
  approximation of noisy scattered data on the sphere}, Numer. Math., 137
  (2017), pp.~579--605, \url{https://doi.org/10.1007/s00211-017-0886-6},
  \url{https://doi.org/10.1007/s00211-017-0886-6}.

\bibitem{KePhPi2011}
{\sc G.~Kerkyacharian, T.~M. Pham~Ngoc, and D.~Picard}, {\em Localized
  spherical deconvolution}, Ann. Statist., 39 (2011), pp.~1042--1068,
  \url{https://doi.org/10.1214/10-AOS858}.

\bibitem{LeMh2008}
{\sc Q.~T. Le~Gia and H.~N. Mhaskar}, {\em Localized linear polynomial
  operators and quadrature formulas on the sphere}, SIAM J. Numer. Anal., 47
  (2008/09), pp.~440--466, \url{https://doi.org/10.1137/060678555}.

\bibitem{LeNaWaWe2006}
{\sc Q.~T. Le~Gia, F.~J. Narcowich, J.~D. Ward, and H.~Wendland}, {\em
  Continuous and discrete least-squares approximation by radial basis functions
  on spheres}, J. Approx. Theory, 143 (2006), pp.~124--133,
  \url{https://doi.org/10.1016/j.jat.2006.03.007}.

\bibitem{LeSlWe2010}
{\sc Q.~T. Le~Gia, I.~H. Sloan, and H.~Wendland}, {\em Multiscale analysis in
  {S}obolev spaces on the sphere}, SIAM J. Numer. Anal., 48 (2010),
  pp.~2065--2090, \url{https://doi.org/10.1137/090774550},
  \url{http://dx.doi.org/10.1137/090774550}.

\bibitem{LiCaChXu2012}
{\sc S.~Lin, F.~Cao, X.~Chang, and Z.~Xu}, {\em A general radial
  quasi-interpolation operator on the sphere}, J. Approx. Theory, 164 (2012),
  pp.~1402--1414, \url{https://doi.org/10.1016/j.jat.2012.07.001}.

\bibitem{Lin2018}
{\sc S.-B. Lin}, {\em Nonparametric regression using needlet kernels for
  spherical data}, J. Complex., 50 (2019), pp.~66 -- 83,
  \url{https://doi.org/https://doi.org/10.1016/j.jco.2018.09.003}.

\bibitem{LiGuZh2017}
{\sc S.-B. Lin, X.~Guo, and D.-X. Zhou}, {\em Distributed learning with
  regularized least squares}, J. Mach. Learn. Res., 18 (2017), pp.~Paper No.
  92, 31.

\bibitem{LiZh2018}
{\sc S.-B. Lin and D.-X. Zhou}, {\em Distributed kernel-based gradient descent
  algorithms}, Constr. Approx., 47 (2018), pp.~249--276,
  \url{https://doi.org/10.1007/s00365-017-9379-1},
  \url{https://doi.org/10.1007/s00365-017-9379-1}.

\bibitem{Marinucci_etal2008}
{\sc D.~Marinucci, D.~Pietrobon, A.~Balbi, P.~Baldi, P.~Cabella,
  G.~Kerkyacharian, P.~Natoli, D.~Picard, and N.~Vittorio}, {\em Spherical
  needlets for cosmic microwave background data analysis}, Mon. Not. Roy.
  Astron. Soc., 383 (2008), pp.~539--545,
  \url{https://doi.org/10.1111/j.1365-2966.2007.12550.x}.

\bibitem{Mendillo2006}
{\sc M.~Mendillo}, {\em Storms in the ionosphere: Patterns and processes for
  total electron content}, Rev. Geophys., 44 (2006),
  \url{https://doi.org/10.1029/2005RG000193}.

\bibitem{MhNaWa1999}
{\sc H.~N. Mhaskar, F.~J. Narcowich, and J.~D. Ward}, {\em Approximation
  properties of zonal function networks using scattered data on the sphere},
  Adv. Comput. Math., 11 (1999), pp.~121--137,
  \url{https://doi.org/10.1023/A:1018967708053}.

\bibitem{MhNaWa2001}
{\sc H.~N. Mhaskar, F.~J. Narcowich, and J.~D. Ward}, {\em Spherical
  {M}arcinkiewicz-{Z}ygmund inequalities and positive quadrature}, Math. Comp.,
  70 (2001), pp.~1113--1130,
  \url{https://doi.org/10.1090/S0025-5718-00-01240-0}.

\bibitem{MuPaAnPa2013}
{\sc P.~Mukhtarov, D.~Pancheva, B.~Andonov, and L.~Pashova}, {\em {Global TEC
  maps based on GNSS data: 1. Empirical background TEC model}}, J. Geophys.
  Res-Space Phys., 118 (2013), pp.~4594--4608,
  \url{https://doi.org/10.1002/jgra.50413}.

\bibitem{Muller1966}
{\sc C.~M{\"{u}}ller}, {\em {Spherical Harmonics}}, Springer-Verlag, Berlin-New
  York, 1966.

\bibitem{NaPeWa2006}
{\sc F.~J. Narcowich, P.~Petrushev, and J.~D. Ward}, {\em Localized tight
  frames on spheres}, SIAM J. Math. Anal., 38 (2006), pp.~574--594,
  \url{https://doi.org/10.1137/040614359}.

\bibitem{NaSuWaWe2007}
{\sc F.~J. Narcowich, X.~Sun, J.~D. Ward, and H.~Wendland}, {\em Direct and
  inverse sobolev error estimates for scattered data interpolation via
  spherical basis functions}, Found. Comput. Math., 7 (2007), pp.~369--390,
  \url{https://doi.org/10.1007/s10208-005-0197-7},
  \url{https://doi.org/10.1007/s10208-005-0197-7}.

\bibitem{NaWa2002}
{\sc F.~J. Narcowich and J.~D. Ward}, {\em Scattered data interpolation on
  spheres: error estimates and locally supported basis functions}, SIAM J.
  Math. Anal., 33 (2002), pp.~1393--1410,
  \url{https://doi.org/10.1137/S0036141001395054},
  \url{http://dx.doi.org/10.1137/S0036141001395054}.

\bibitem{Planck2016A1}
{\sc {Planck Collaboration} and {Adam, R. et al.}}, {\em {Planck 2015 results -
  I. Overview of products and scientific results}}, Astron. Astrophys., 594
  (2016), p.~A1, \url{https://doi.org/10.1051/0004-6361/201527101},
  \url{https://doi.org/10.1051/0004-6361/201527101}.

\bibitem{RaSaZh1994}
{\sc E.~A. Rakhmanov, E.~Saff, and Y.~Zhou}, {\em Minimal discrete energy on
  the sphere}, Math. Res. Lett, 1 (1994), pp.~647--662.

\bibitem{Sloan1995}
{\sc I.~H. Sloan}, {\em Polynomial interpolation and hyperinterpolation over
  general regions}, J. Approx. Theory, 83 (1995), pp.~238--254,
  \url{https://doi.org/10.1006/jath.1995.1119}.

\bibitem{SlWo2012}
{\sc I.~H. Sloan and R.~S. Womersley}, {\em Filtered hyperinterpolation: a
  constructive polynomial approximation on the sphere}, GEM Int. J. Geomath., 3
  (2012), pp.~95--117, \url{https://doi.org/10.1007/s13137-011-0029-7}.

\bibitem{WaSl2017}
{\sc H.~Wang and I.~H. Sloan}, {\em On filtered polynomial approximation on the
  sphere}, J. Fourier Anal. Appl., 23 (2017), pp.~863--876,
  \url{https://doi.org/10.1007/s00041-016-9493-7}.

\bibitem{Wang2016}
{\sc Y.~Wang}, {\em Filtered polynomial approximation on the sphere}, Bull.
  Aust. Math. Soc., 93 (2016), pp.~162--163,
  \url{https://doi.org/10.1017/S000497271500132X},
  \url{https://doi.org/10.1017/S000497271500132X}.

\bibitem{WaLeSlWo2017}
{\sc Y.~G. Wang, Q.~T. Le~Gia, I.~H. Sloan, and R.~S. Womersley}, {\em Fully
  discrete needlet approximation on the sphere}, Appl. Comput. Harmon. Anal.,
  43 (2017), pp.~292--316, \url{https://doi.org/10.1016/j.acha.2016.01.003}.

\bibitem{WaSlWo2018}
{\sc Y.~G. Wang, I.~H. Sloan, and R.~S. Womersley}, {\em Riemann localisation
  on the sphere}, J. Fourier Anal. Appl., 24 (2018), pp.~141--183,
  \url{https://doi.org/10.1007/s00041-016-9496-4},
  \url{https://doi.org/10.1007/s00041-016-9496-4}.

\bibitem{Wendland1995}
{\sc H.~Wendland}, {\em Piecewise polynomial, positive definite and compactly
  supported radial functions of minimal degree}, Adv. Comput. Math., 4 (1995),
  pp.~389--396, \url{https://doi.org/10.1007/BF02123482},
  \url{https://doi.org/10.1007/BF02123482}.

\bibitem{Womersley2018}
{\sc R.~S. Womersley}, {\em Efficient spherical designs with good geometric
  properties}, in Contemporary computational mathematics---a celebration of the
  80th birthday of {I}an {S}loan. {V}ol. 1, 2, Springer, Cham, 2018,
  pp.~1243--1285.

\bibitem{WuZh2005}
{\sc Q.~Wu and D.-X. Zhou}, {\em S{VM} soft margin classifiers: linear
  programming versus quadratic programming}, Neural Comput., 17 (2005),
  pp.~1160--1187, \url{https://doi.org/10.1162/0899766053491896}.

\bibitem{Wu1995}
{\sc Z.~M. Wu}, {\em Compactly supported positive definite radial functions},
  Adv. Comput. Math., 4 (1995), pp.~283--292,
  \url{https://doi.org/10.1007/BF03177517},
  \url{https://doi.org/10.1007/BF03177517}.

\bibitem{Zhou2018deep}
{\sc D.-X. Zhou}, {\em Deep distributed convolutional neural networks:
  universality}, Anal. Appl. (Singap.), 16 (2018), pp.~895--919,
  \url{https://doi.org/10.1142/S0219530518500124},
  \url{https://doi.org/10.1142/S0219530518500124}.

\bibitem{Zhou2018distributed}
{\sc D.-X. Zhou}, {\em Distributed approximation with deep convolutional neural
  networks}, Manuscript,  (2018).

\bibitem{Zhou2019}
{\sc D.-X. Zhou}, {\em Universality of deep convolutional neural networks},
  Appl. Comput. Harmon. Anal.,  (2019),
  \url{https://doi.org/https://doi.org/10.1016/j.acha.2019.06.004},
  \url{http://www.sciencedirect.com/science/article/pii/S1063520318302045}.

\bibitem{ZhJe2006}
{\sc D.-X. Zhou and K.~Jetter}, {\em Approximation with polynomial kernels and
  {SVM} classifiers}, Adv. Comput. Math., 25 (2006), pp.~323--344,
  \url{https://doi.org/10.1007/s10444-004-7206-2}.

\end{thebibliography}
\end{document}